\DeclareMathAlphabet\mathbb{U}{msb}{m}{n}
\def\ZZ{\mathbb{Z}}
\newcommand{\mono}{\rightarrowtail}
\newcommand{\epi}{\twoheadrightarrow}
\numberwithin{equation}{section}
\newtheorem{Theorem}{Theorem}[section]
\newtheorem{Corollary}[Theorem]{Corollary}
\newtheorem{Lemma}[Theorem]{Lemma}
\newtheorem{Proposition}[Theorem]{Proposition}
\theoremstyle{definition}
\newtheorem{Definition}[Theorem]{Definition}
\newtheorem{Remark}[Theorem]{Remark}
\def\Im{{\rm I}{\sf m}}
\begin{document}

\title{Right exact localizations of groups}

\author{Danil Akhtiamov}
\address{Laboratory of Modern Algebra and Applications,  St. Petersburg State University, 14th Line, 29b,
Saint Petersburg, 199178 Russia} 
\email{akhtyamoff1997@gmail.com}

\author{Sergei O. Ivanov}
\address{Laboratory of Modern Algebra and Applications,  St. Petersburg State University, 14th Line, 29b,
Saint Petersburg, 199178 Russia} 
\email{ivanov.s.o.1986@gmail.com}

\author{Fedor Pavutnitskiy}
\address{Laboratory of Modern Algebra and Applications,  St. Petersburg State University, 14th Line, 29b,
Saint Petersburg, 199178 Russia}
\email{fedor.pavutnitskiy@gmail.com}

\thanks{The main result of the paper (Theorem \ref{theorem_s.d._of_nilpotent}) was obtained under the support
of the Russian Science Foundation grant N 16-11-10073. The authors also are
supported by the grant of the Government of the Russian Federation for the
state support of scientific research carried out under the supervision of leading
scientists, agreement 14.W03.31.0030 dated 15.02.2018.}


\begin{abstract}
We introduce several classes of localizations (idempotent monads) on the category of groups and study their properties and relations. The most interesting class for us is the class of localizations which coincide with their zero derived functors.  We call them right exact (in the sense of Keune). We prove that a right exact localization $L$ preserves the class of nilpotent groups and that for a finite $p$-group $G$ the map $G\to LG$ is an epimorphism. We also prove that some examples of localizations (Baumslag's $P$-localization with respect to a set of primes $P,$ Bousfield's $HR$-localization, Levine's localization, Levine-Cha's $\ZZ$-localization) are right exact.  At the end of the paper we discuss a conjecture of Farjoun about Nikolov-Segal maps and prove a very special case of this conjecture. 
\end{abstract}
\maketitle
\section*{\bf Introduction}

By a localization we mean an endofunctor $L:\mathcal C\to \mathcal C$ on a category $\mathcal C $ together with a natural transformation $\eta:{\sf Id} \to L$ such that $\eta L=L \eta :L\overset{\cong }\to L^2$ is an isomorphism. An object $c\in \mathcal C$ is called local with respect to a localization $L$ if the morphism $\eta_c: c\to Lc$ is an isomorphism.  Localizations are also known as idempotent monads or idempotent coaugmented functors.
Equivalently one can think about localizations as reflections to reflective subcategories. Historically, functors of this kind appeared in topology, for example Sullivan localization \cite{Sullivan} and Bousfield homological localization \cite{BousfieldLocSpace}. Later, the connection of these localizations with localizations on the category of groups, such as $HR$-localization of \cite{Bousfield77} was discovered. On the other hand, Levin introduced an algebraic closure in \cite{Levine}, now called Levine's localization as a tool to study link concordance. Later it was found that Levine's localization is closely connected to the parafree conjecture \cite{cochranorr} in group theory. One more example of localization comes from the theory of uniquely divisible groups, see \cite{Baumslag1}. This broad variety of examples calls for a systematic study of localization functors on the category of groups. Such study was started in works of Farjoun, Libman  \cite{Libman}, Casacuberta \cite{Casacuberta} and others. The present work is a continuation of the above-mentioned research program. 


We are interested in localizations on the category of groups. There are several works which are focused on properties that are preserved by localizations (see \cite{Casacuberta} for discussion). For example it is known that the following classes of groups are preserved by localizations: abelian groups; nilpotent
groups of class two or less; abelian groups of bounded exponent; finite abelian groups; divisible
abelian groups; rings; commutative rings; fields; modules over rings \cite{Libman}, \cite{Casacuberta}. On the other hand there are counterexamples: the classes of perfect groups  \cite{RodriguezSchererViruel}, finite groups \cite{Libman}; solvable groups, metabelian groups \cite{OSullivan} in general are not preserved by localizations. However, it is not known whether a localization of a nilpotent group is nilpotent. We find this question the most intriguing question in this theory. 

When we work with concrete examples of localizations such as Bousfield's $HR$-localization, or Baumslag's $P$-localization with respect to a set of primes $P$, we see that they satisfy some nice categorical properties which do not hold for arbitrary localizations. This motivates us to consider  some classes of `nice' localizations and ask which properties of groups are preserved by `nice' localizations, and which known examples of localizations are `nice'. 

In this note we introduce several classes of localizations on the category of groups, study relations between them and their properties.  We also prove that some concrete examples are in these classes.  The most interesting class among them is the class of localizations which are equal to their zero derived functor. We call such localizations right exact. In the first two parts of Section \ref{section_right_exact-functors} we discuss various definitions of right exactness for functors ${\sf Gr}\to{\sf Gr}$ and equivalences between these definitions. The third part of Section \ref{section_right_exact-functors} is based on this discussion and devoted to the proof of Theorem \ref{th_s-d-l}, which is our main tool for working with right exact localizations:

\

\noindent {\bf Theorem \ref{th_s-d-l}}. {\it Let $L$ be a localization on the category of groups. Then the following statements are equivalent.
\begin{enumerate}
\item The functor $L$ is right exact.
\item For a local group $A$ and any its normal subgroup $U$, if the semidirect product $A\ltimes U$ is local, then the quotient $A/U$ is local.

\item For any slimplicial group  with local components $G_\bullet$ the group $\pi_0(G_\bullet)$ is local.  
\end{enumerate} 
}


 Further in Section \ref{section-right-exact-nilpotent} we prove the following theorem.

\ 

\noindent {\bf Theorem \ref{theorem_s.d._of_nilpotent}}. {\it Let $L$ be a right exact localization on the category of groups and $G$ be a nilpotent group of class $n.$ Then $LG$ is a nilpotent group of class at most $n.$ Moreover, if $G$ is a finite $p$-group, then $G\to LG$ is an epimorphism. }

We also prove that some interesting examples of localizations are right exact:

\ 

\noindent {\bf Theorem.} {\it Baumslag's $P$-localization with respect to a set of primes $P,$ Bousfield's $HR$-localization, Levine's localization and Levine-Cha's $\ZZ$-localization are right exact. }

\noindent (see Section \ref{section_examples_s.d} for details).

Further, we consider the following five classes of localizations $L$ on the category of groups.
\begin{itemize}
\item $ \mathfrak{L}_{\sf right\ exact}.$  Right exact  localizations. 

\item $ \mathfrak{L}_{\sf freely\ defined}.$ {\it Freely defined} localizations are localizations $L$ which are localizations with respect to the class of morphisms $W=\{F\to LF\mid F \text{\ is free} \}.$ Roughly speaking, these are localizations which are determined by their  values on free groups.  

\item $\mathfrak{L}_{\sf equational}.$ {\it Equational localizations} are localizations with respect to a class of homomorphisms $\{F_\alpha\to G_\alpha\mid \alpha \in \mathfrak{A}\},$ where $F_\alpha$ is a free group. We also give an equivalent description of this class in the language of systems of equations which explains the term (Proposition \ref{prop_equational_equivalent}).

\item $\mathfrak{L}_{\sf epi}.$ A localization $L$ is called {\it epi-preserving} if for any epimorphism of groups $G\epi H$ the map $LG\to LH$ is also an epimorphism.  We prove that a localization is epi-preserving if and only if the image of a homomorphism between local groups is local (Proposition \ref{prop_epi_equivalent}).

\item $\mathfrak{L}_{\sf all}.$ All localizations. 

\end{itemize}
 
We prove that this is a strictly increasing sequence of classes:
$$\mathfrak{L}_{\sf right\ exact} \subsetneqq \mathfrak{L}_{\sf freely\ defined} \subsetneqq \mathfrak{L}_{\sf equational} \subsetneqq \mathfrak{L}_{\sf epi} \subsetneqq \mathfrak{L}_{\sf all}.$$
In particular, we give concrete examples from the differences of these classes.

In the last section of the paper we discuss the action of Baumslag $P$-localization (see Section \ref{section_examples_s.d} for the definition) on nilpotent groups. In particular we show that in this case the localization map $G\to LG$ is a NS-map: as in \cite{NikolovSegal}, the map $f:H\to G$ is called a NS-map, if $[G,G] = [G,\Im f].$ This answers the conjecture of Farjoun in case of Baumslag $P$-localization of nilpotent groups. 

\subsection*{Acknowledgements} We are grateful to Emmanuel Dror Farjoun for helpful
discussions. We also want to thank Andrei Lavrenov for the information about right exact functors in the sense of Keune.   

\section{\bf Preliminaries}\label{section-preliminaries}
In this section we review some necessary background material on localizations and simplicial homotopy theory. This material will be used in the next section in discussion of right exact functors ${\sf Gr}\to{\sf Gr}.$
\subsection{Basic notions of localizations}

Let us introduce some terminology first. If $L$ is a localization on the category of groups, we say that a group $A$ is local (or $L$-local if we want to emphasize $L$) if $\eta_A:A\to LA$ is an isomorphism. Note that a localization is uniquely defined by the class of local groups because the map $\eta_G: G\to LG$ is the universal map to a local group. 
We say that a map $f:G\to H$ is an equivalence (or $L$-equivalence) if $Lf$ is an isomorphism.  Let $W$  be a class of maps. A group $A$ is said to be $W$-local  if $w^*:{\sf Hom}(H,A)\to {\sf Hom}(G,A)$ is a bijection for any $(w:G\to H)\in W.$ A localization $L$ is called   localization with respect to $W$ if the class of $L$-local groups is the class of $W$-local groups. Note that a localization is always the  localization with respect to the class of its equivalences.  If $W$ is a set, the localization with respect to $W$  exists \cite[Cor. 1.7]{CasacubertaPeschkePfenniger}. Assuming Vop\v{e}nka principle,  all localizations can be presented as localization with respect to one map $W=\{f:G\to H\}$ (see \cite{casacuberta-groups}, also \cite{CasacubertaScevenelsSmith}). The localization with respect to a map $f:G\to H$ is denoted by $L_f.$ For example, Baumslag's $P$-localization (Section \ref{section_examples_s.d}) is the localization with respect to the map $\ZZ\to \ZZ[P^{-1}].$

\subsection{Background on simplicial groups}


In the discussion of homotopical aspects of right exactness we prefer to use the language of simplicial groups. Let us remind it.

We use the standard structure of model category on the category of simplicial groups $s{\sf Gr}$ as a framework for doing homotopy theory in this category, see \cite{GoerssJardine}.  Following Kan \cite{Kan}, \cite{DwyerKan}, we say that a simplicial group $F_\bullet$ is free if all groups $F_n$ are free and their bases are stable under degeneracy maps. In terms of the model category structure, free simplicial groups are cofibrant objects in the category of simplicial groups \cite{DwyerKan}, \cite{Quillen}. Moreover, a simplicial group is cofibrant if and only if it is a retract of a free simplicial group. For any simplicial group $G_\bullet$ there is a cofibrant replacement $F_\bullet\overset{\sim} \epi G_\bullet,$ where $F_\bullet$ is a free simplicial group, and this replacement can be made functorial. For example, one can start with the counit of the adjunction $\mathcal G:{\sf sSets}\rightleftarrows {\sf sGr}:  \overline{W}$ between Kan loop group functor $\mathcal G$ and the classifying space functor $\overline{W}$ (see
 \cite[Th. 3.3]{DwyerKan}, \cite[Ch V. Prop. 6.3]{GoerssJardine}). For a discrete group $G$, considered as a constant simplicial group, we will call such cofibrant replacement $F_\bullet  \overset{\sim}\epi G$ a free resolution of $G,$ if $F_\bullet$ is a free simplicial group. A free resolution is unique up to a strong homotopy equivalence (loop homotopy equivalence) \cite[Prop. 6.5]{Kan}.  We will use this notion later in Section \ref{section_right_exact-functors} to define derive functors of a given functor $\Phi:{\sf Gr}\to {\sf Gr}.$


Recall \cite[Def. 3.6]{curtis-simplicial} that to any simplicial group $K_{\bullet}$ one can assign a chain complex of (in general non-abelian) groups $N K_{\bullet},$ called {\it the Moore complex} of $K_{\bullet},$ defined as
\begin{gather*}
NK_0 = K_0\\
NK_n = \bigcap_{i = 1}^n {\sf Ker}(d_i:K_n\to K_{n-1}) , \ n>0\\
d = d_0|_{NK_n}:NK_n\to NK_{n-1}.
\end{gather*}
Homology groups of this chain complex are isomorphic to homotopy groups of $K_{\bullet},$ see \cite[Th. 3.7]{curtis-simplicial}.

\subsection{(Pre)crossed modules}\label{subsection-precrossed-modules}

Let $G$ be a group. We say that $U$ is a $G$-group, if $U$ is a group together with  a right action of $G$ on $U$ by automorphisms. Morphisms of $G$-groups are homomorphisms preserving the action of $G.$ The category of $G$-groups is denoted by $G$-${\sf Gr}$.   
A normal subgroup $U$ of $G$ will be always considered as a $G$-group with the action by conjugation.

Recall that a precrossed module $\partial$ is a $G$-group morphism $\partial:U\to G,$ where $U$ is a $G$-group. The image of a precrossed module is a normal subgroup of $G$ and the quotient by the image is denoted by $\pi_1(\partial)=G/\Im\, \partial.$  The category of precrossed modules is denoted by ${\sf PCr}$. Moreover, if the following condition
$$
u_1^{\partial (u_2)} = u_1^{u_2}
$$
holds for all $u_1, \, u_2\in U$, the precrossed module $\partial$ is called the crossed module. The theory of crossed and precrossed modules is discussed in detail in \cite{brownhiggins}. Importance of these constructions comes from the fact that crossed modules serve as algebraic models for homotopy 2-types in a similar way as simplicial groups algebraically encode connected topological spaces. Here we sketch one aspect of this equivalence.

It is well known that the category of crossed modules is equivalent to the category of simplicial groups whose Moore complex is of length one (\cite[Lemma 2.2]{Loday}, see also \cite{DonadzeInassaridzePorter} and \cite[Theorem 1.132]{MikhailovPassi}). Moreover, this equivalence of categories preserves homotopy groups. For a crossed module $\partial: U\to G$ the corresponding simplicial group $E_\bullet=E_\bullet(U\to G)$ is called the nerve of the crossed module $\partial .$ It has the following components 
$$E_0=G, \hspace{1cm} E_{n+1}=E_n\ltimes U=(\dots (G\ltimes U)\ltimes \dots )\ltimes U.$$
Its face and degeneracy homomorphisms are defined as
\begin{equation}
\begin{split}
d_0(g,u_1,\dots,u_n)&=(g\partial(u_1),u_2,\dots,u_n), \\
d_i(g,u_1,\dots, u_n)&=(g,u_1,\dots,u_iu_{i+1},\dots,u_n), \hspace{1cm}  0<i<n,\\
d_n(g,u_1,\dots,u_n)&=(g,u_1,\dots,u_{n-1}),\\
s_i(g,u_1,\dots,u_n)&=(g,u_1,\dots,u_{i-1},1,u_i,\dots, u_n), \hspace{1cm} 0\leq i\leq n.
\end{split}
\end{equation}
Then $\pi_0(E_\bullet)=H$ and $\pi_i(E_\bullet)=0$ for $i\ne 0.$

\section{\bf Right exact functors} \label{section_right_exact-functors}

The aim of this section is to remind the notion of a right exact functor in the sense of Keune \cite{Keune}, prove that a functor is right exact if and only if it coincides with its zero derived functor and give some other equivalent descriptions of right exact functors (Theorem \ref{theorem_s.d.f}). 

\subsection{Definitions of right exact functors}

Let $G,H$ be groups. Following Keune \cite{Keune} we denote by double arrow $f:G\Rightarrow H$ a triple of homomorphisms $\alpha=(\alpha_0,\alpha_1,\alpha^0),$ where $\alpha_0, \alpha_1:G\to H$ and $\alpha^0:H\to G$ such that $\alpha_0 \alpha^0={\sf id}_H= \alpha_1 \alpha^0.$ We call such triple a {\it splitted couple of epimorphisms}. 
$$G \overset{\alpha}\Longrightarrow H, 
\hspace{2cm}
\begin{tikzcd}
G \arrow[r,"\alpha_0",shift left=6mm] \arrow[r," \alpha_1",shift left=-5mm] & H \arrow[l,"\alpha^0"']
\end{tikzcd},
\hspace{2cm} 
\alpha_0 \alpha^0={\sf id}_H= \alpha_1 \alpha^0.
$$
Splitted couples of epimorphisms arise in the beginning of the diagram of a simplicial group.  
For a splitted couple of epimorphisms  $\alpha:G\Rightarrow H$ we set 
$$I(\alpha):=\{\ (\alpha_0 g, \alpha_1 g)\in H\times H \mid g\in G\ \}.$$
Note that $I(\alpha)={\rm I}{\sf m}((\alpha_0,\alpha_1): G\to H\times H).$ 
  On the other hand, for a homomorphism $\varphi :G\to H$ Keune defines the following set 
$$K(\varphi):=\{\ (g_0,g_1)\in G\times G \mid \varphi g_0=\varphi g_1\ \},$$
which is a pull-back $G\times_H G.$

\begin{Lemma}\label{lemma_Keune_diagram} Consider a diagram
\begin{equation}\label{eq_Keune_diagram}
G'\overset{\alpha}\Longrightarrow G \overset{\varphi}\longrightarrow G'' 
,
\end{equation}
where $\alpha$ is a splitted couple of epimorphisms and $\varphi$ is an epimorphism. Then the following statements are equivalent. 
\begin{enumerate}
\item There is an equality $I(\alpha)=K(\varphi)$ (in particular $\varphi \alpha_0=\varphi \alpha_1$).
\item If we set $N={\sf Ker}(\alpha_1),$ then the sequence 
$$N \overset{\tilde \alpha_0}\longrightarrow G \overset{\varphi}\longrightarrow G''\longrightarrow 1$$
is exact, where $\tilde{\alpha_0}$ is the restriction of $\alpha_0.$ 
\item The map $\varphi$ is the coequaliser of the couple $(\alpha_0,\alpha_1).$

\item  There is an equality $\varphi \alpha_0=\varphi \alpha_1$ and the map $(\alpha_0,\alpha_1):G'\to G\times_{G''} G$ is an epimorphism.  
\end{enumerate}
\end{Lemma}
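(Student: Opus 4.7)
The plan is to prove the equivalences in two stages: first establish that (1), (2), and (4) are easy reformulations of the same condition, and then connect (1) to the categorical coequaliser statement (3). The equivalence (1) $\Leftrightarrow$ (4) is essentially a tautology: $K(\varphi)$ \emph{is} the kernel pair $G\times_{G''}G$, and the map $(\alpha_0,\alpha_1):G'\to G\times G$ lands in $G\times_{G''}G$ iff $\varphi\alpha_0=\varphi\alpha_1$, so $I(\alpha)=K(\varphi)$ says precisely that $\varphi$ equalises the pair and the induced map $G'\to G\times_{G''}G$ is surjective.

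For (1) $\Leftrightarrow$ (2), assume (1) first. The inclusion $\tilde\alpha_0(N)\subseteq\ker\varphi$ follows from $I(\alpha)\subseteq K(\varphi)$, since for $n\in N$ one has $(\alpha_0 n,1)=(\alpha_0 n,\alpha_1 n)\in I(\alpha)$. Conversely, any $k\in\ker\varphi$ gives $(k,1)\in K(\varphi)=I(\alpha)$, so there is $g'\in G'$ with $\alpha_0 g'=k$ and $\alpha_1 g'=1$, exhibiting $g'\in N$ with $\tilde\alpha_0(g')=k$; together with the surjectivity of $\varphi$ this gives (2). For (2) $\Rightarrow$ (1), the section $\alpha^0$ does the real work. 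The identity $\varphi\alpha_0=\varphi\alpha_1$ follows because $g'\cdot(\alpha^0\alpha_1 g')^{-1}\in N$, whence its $\alpha_0$-image lies in $\ker\varphi$, giving $\varphi\alpha_0(g')=\varphi\alpha_1(g')$. Given $(g_0,g_1)\in K(\varphi)$, we have $g_0 g_1^{-1}\in\ker\varphi=\tilde\alpha_0(N)$, so $g_0 g_1^{-1}=\alpha_0(n)$ for some $n\in N$, and then $g':=n\cdot\alpha^0(g_1)$ satisfies $\alpha_0 g'=g_0$ and $\alpha_1 g'=g_1$, witnessing $(g_0,g_1)\in I(\alpha)$.

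Finally, for (2) $\Leftrightarrow$ (3) the direction (2) $\Rightarrow$ (3) is immediate from the universal property: any $\psi:G\to Q$ with $\psi\alpha_0=\psi\alpha_1$ kills $\tilde\alpha_0(N)=\ker\varphi$, so it factors uniquely through $\varphi$. The converse (3) $\Rightarrow$ (2) is the step I expect to be the main obstacle, since the coequaliser in ${\sf Gr}$ is $G/M$ where $M$ is the \emph{normal closure} of $\{\alpha_0(g')\alpha_1(g')^{-1}\mid g'\in G'\}$, and one must identify $M$ with $\tilde\alpha_0(N)$. The inclusion $\tilde\alpha_0(N)\subseteq M$ is clear by writing $\alpha_0(n)=\alpha_0(n)\alpha_1(n)^{-1}$ for $n\in N$. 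For the opposite inclusion, the identity $\alpha_0 g'\cdot\alpha_1 g'^{-1}=\alpha_0\bigl(g'\cdot(\alpha^0\alpha_1 g')^{-1}\bigr)$ places each generator of $M$ inside $\tilde\alpha_0(N)$, and one then needs $\tilde\alpha_0(N)$ to be normal in $G$: for arbitrary $g\in G$ we have $g=\alpha_0(\alpha^0 g)$, so $g\alpha_0(n)g^{-1}=\alpha_0(\alpha^0 g\cdot n\cdot(\alpha^0 g)^{-1})$, and conjugation by $\alpha^0 g$ preserves $N=\ker\alpha_1$. This closes the cycle.
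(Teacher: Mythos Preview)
Your proof is correct and follows essentially the same route as the paper's: the equivalences (1)$\Leftrightarrow$(4) and (1)$\Leftrightarrow$(2) are handled identically (including the key tricks $g'\cdot(\alpha^0\alpha_1 g')^{-1}\in N$ and $g':=n\cdot\alpha^0(g_1)$), and for (2)$\Leftrightarrow$(3) both you and the paper compute the coequaliser explicitly as $G/\tilde\alpha_0(N)$ by showing $\tilde\alpha_0(N)$ is normal via $g\,\alpha_0(n)\,g^{-1}=\alpha_0\bigl((\alpha^0 g)\,n\,(\alpha^0 g)^{-1}\bigr)$. The only cosmetic difference is that you phrase the coequaliser computation as ``identify the normal closure $M$ with $\tilde\alpha_0(N)$'' whereas the paper phrases it as ``show $\tilde\alpha_0(N)$ is normal and $G/\tilde\alpha_0(N)$ has the universal property''; one small omission is that in your (2)$\Rightarrow$(3) you should note that $\varphi$ itself satisfies $\varphi\alpha_0=\varphi\alpha_1$ (which you have from the already-established (2)$\Rightarrow$(1)).
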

\begin{proof}
(1)$\Rightarrow$(2). For any  $n\in N$ we have $ (\alpha_0 n,1)\in I(\alpha)=K(\varphi).$ Then $\varphi(\alpha_0 n)=1.$ Hence $ {\rm I}{\sf m}(\tilde \alpha_0)\subseteq {\sf Ker}(\varphi).$ If $g\in {\sf Ker}(\varphi),$ then $(g,1)\in K(\varphi)=I(\alpha).$ Then there exists $ g'\in G'$ such that $(\alpha_0g',\alpha_1g' )=(g,1).$ Then $g'\in N,$ and $g\in  {\rm I}{\sf m}(\tilde{\alpha}_0).$ Therefore $ {\rm I}{\sf m}(\tilde \alpha_0)= {\sf Ker}(\varphi).$ 

(2)$\Rightarrow$(1). First we prove that $\varphi \alpha_0=\varphi \alpha_1.$ Indeed, for any $g'\in G'$ we have $ g'\cdot (\alpha^0 \alpha_1g')^{-1} \in N.$ Set $n:=g'\cdot (\alpha^0 \alpha_1  g')^{-1}.$ Then, since $\varphi\tilde \alpha_0 = 1,$ we have $1=\varphi \alpha_0 n = \varphi \alpha_0 (g'\cdot (\alpha^0 \alpha_1g')^{-1})= (\varphi\alpha_0g') \cdot (\varphi\alpha_1g')^{-1}.$ Hence $\varphi\alpha_0 g'=\varphi\alpha_1 g'.$

Since $\varphi \alpha_0=\varphi \alpha_1,$ we have $I(\alpha)\subseteq K(\varphi).$ Let $(g_0,g_1)\in K(\varphi).$ Then there exists $n\in N$ such that $\alpha_0 n=g_0\cdot g_1^{-1}.$ Take $g':=n \cdot \alpha^0 g_1.$ Then  $\alpha_0g'=g_0$ and $\alpha_1 g'=g_1.$ Hence $(g_0,g_1)\in I(\alpha).$ Therefore $I(\alpha)=K( \varphi).$

(3)$\Leftrightarrow$(2). We claim that $H:={\rm I}{\sf m}( \tilde \alpha_0 :N\to G)$ is normal in $G.$ Indeed $(\alpha_0 n)^g=\alpha_0(n^{\alpha^0 g}) $ and $n^{\alpha^0 g} \in N.$ Then we need to prove that the projection $\varphi:G\to G/H$ is the coequalizer of the couple of maps $(\alpha_0,\alpha_1).$ Since (1)$\Leftrightarrow $ (2), we obtain that $\varphi \alpha_0=\varphi \alpha_1.$ On the other hand, for any homomorphism $\psi: G\to G''$ satisfying $\psi \alpha_0=\psi \alpha_1 $ we have $ H\subseteq {\sf Ker}( \psi ).$ The assertion follows. 

(1)$\Leftrightarrow$(4). It follows from the equations $K(\varphi)=G\times_{G''}G$ and $I(\alpha)={\rm I}{\sf m}((\alpha_0,\alpha_1):G'\to G\times G).$
\end{proof}

\begin{Definition}[Right exact functor]
Such a diagram \eqref{eq_Keune_diagram} satisfying the equivalent statements (1),(2),(3),(4) of Lemma \ref{lemma_Keune_diagram} is called a {\it right exact diagram}. Following Keune \cite{Keune}, we say that a functor 
$\Phi:{\sf Gr}\to {\sf Gr}$ is {\it right exact (in the sense of Keune)} if it preserves right exact diagrams. Lemma \ref{lemma_Keune_diagram} implies that a functor is right exact if and only if it commutes with coequalisers of splitted couples of epimorphisms. 
\end{Definition}

\begin{Remark} Right exact functors may not commute with coequalizers of arbitrary couples of homomorphisms. They commute only with coequalizers of splitted couples of epimorphisms.   
\end{Remark}

\begin{Remark} 
Some authors define right exact functors as functors commuting with finite colimits. We do not use this definition. By a right exact functor we always mean a right exact functor in the sense of Keune.
\end{Remark}

 Alternatively, as mentioned in Section \ref{section-preliminaries}, one can define derived functors ${\sf L}_n \Phi$ of $\Phi$ using cofibrant replacement functor: for $G\in {\sf Gr}$ define 
\begin{equation}\label{eq-derived-functors}
{\sf L}_n \Phi (G):=\pi_n (\Phi F_\bullet),
\end{equation}
where $F_\bullet$ is a free simplicial resolution of $G$. The extension of $\Phi$ to the category of simplicial groups sends a strong homotopy equivalence to a strong homotopy equivalence in $s{\sf Gr},$ hence the definition does not depend on the choice of $F_\bullet$. In this paper we are only interested in the zero derived functor ${\sf L}_0\Phi$. It is easy to see that there is a natural map ${\sf L}_0\Phi\to \Phi.$ Below we prove (Theorem \ref{theorem_s.d.f}) that functor is right exact if and only if the map is an isomorphism ${\sf L}_0\Phi\cong \Phi.$

As an example of right exact functor, one can consider a cartesian square $\Phi: G\mapsto G\times G$ with diagonal map as a coaugmentation. Indeed, it is easy to see that its derived functors ${\sf L}_n\Phi$ are trivial for $n>0$ and ${\sf L}_0\Phi = \Phi.$

Let $G$ be a group. Let us discuss one more construction $\Phi_G$ derived from a given functor $\Phi$, now in connection with the category of $G$-groups. Let $\Phi:{\sf Gr}\to {\sf Gr}$ be a functor. For a group $G$ we consider a functor from the category of $G$-groups to the category of $\Phi G$-groups
$$ \Phi_G :G\text{-}{\sf Gr} \longrightarrow \Phi G\text{-}{\sf Gr} $$
given by 
$$\Phi_GU:={\sf Ker}(\Phi(G\ltimes U) \to \Phi G).$$
The action of $\Phi G$ on $\Phi_G U$ is determined by the map $\Phi G\to \Phi(G\ltimes U).$ Note that there is an isomorphism 
$$\Phi(G\ltimes U)=\Phi G\ltimes \Phi_GU.$$

Consider a precrossed module $\partial: U\to G.$ It induces a homomorphism 
$$\mu_\partial: G\ltimes U \longrightarrow G, \hspace{1cm} \mu_\partial(g,u)=g\partial(u).$$
This induces a map
$$ \mu^{\Phi}_\partial: \Phi_GU \longrightarrow \Phi G,$$
which is the composition of $\Phi(\mu_\partial)$ and the embedding $\Phi_G U \hookrightarrow \Phi(G\ltimes U).$ It is easy to check that $ \mu^{\Phi}_\partial$ is a precrossed module again, where the action of $\Phi G$ on $\Phi_G U$ goes via the map $\Phi G \to \Phi (G\ltimes U).$ This gives a functor
$$\Phi^{\sf PCr}:{\sf PCr}\longrightarrow {\sf PCr}, \hspace{1cm} \Phi^{\sf PCr}(\partial)=\mu_\partial^\Phi.$$

In case when $U$ is normal subgroup of $G$ we set $\mu_U:=\mu_\partial$ and 
$\mu^\Phi_U:=\mu^\Phi_\partial.$

\subsection{Equivalence of definitions}
 Now we will show that definitions of right exactness in terms of diagrams \eqref{eq_Keune_diagram} and in terms of derived functors \eqref{eq-derived-functors} are equivalent.

\begin{Theorem}[{cf. \cite[Th.8]{Keune},  \cite[Prop. 2.26]{Inassaridze}}] \label{theorem_s.d.f} Let $\Phi:{\sf Gr} \to {\sf Gr} $ be a functor. Then the following statements are equivalent. 
\begin{enumerate}
\item The functor $\Phi$ is right exact.

\item The natural map ${\sf L}_0 \Phi \overset{\cong}\to \Phi$ is an isomorphism.
\item For a short exact sequence $U\mono G \epi H$ the sequence
$$ \Phi_G U \overset{ \mu_U^\Phi}\longrightarrow \Phi G \longrightarrow \Phi H \longrightarrow 1 $$
is exact. 
\item For a precrossed module $\partial$ the  natural morphism $\pi_1(\Phi^{\sf PCr} (\partial) )\overset{\cong}\to \Phi(\pi_1(\partial))$ is an isomorphism. 
\item For a simplicial group $G_\bullet$ the natural morphism $\pi_0(\Phi G_\bullet)\overset{\cong}\to \Phi (\pi_0 G_\bullet)$ is an isomorphism.

\item For any epimorphism of groups $G\epi H$ the following diagram is a coequalizer diagram
$$\Phi( G\times_H G) \rightrightarrows \Phi(G)\to \Phi(H),$$
where the maps that are coequalized are natural projections $G\times_H G\to G.$
\end{enumerate}
\end{Theorem}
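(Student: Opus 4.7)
The plan is to establish the cycle
\[(1)\Rightarrow(6)\Rightarrow(3)\Rightarrow(4)\Rightarrow(5)\Rightarrow(2)\Rightarrow(1),\]
which organises the six conditions into three blocks: the Keune-style reformulations $(1), (6), (3)$, governed by Lemma~\ref{lemma_Keune_diagram}; the (pre)crossed-module and simplicial descriptions $(4), (5)$, bridged by the nerve construction; and the derived-functor statement $(2)$, tied to the rest via free simplicial resolutions.

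The first block is handled by Lemma~\ref{lemma_Keune_diagram}. The implication $(1)\Rightarrow(6)$ is immediate because $G\times_H G\rightrightarrows G\to H$, with the two projections and the diagonal section, satisfies condition~(4) of the Lemma tautologically, so it is a right exact diagram and hence preserved by $\Phi$. For $(6)\Rightarrow(3)$, I would use the isomorphism $G\times_H G\cong G\ltimes U$ where $U=\ker\varphi$; under this identification the projections become $(g,u)\mapsto g$ and $(g,u)\mapsto gu$, and the splitting $\Phi(G\ltimes U)=\Phi G\ltimes\Phi_G U$ lets one compute that the coequaliser of $\Phi\pi_0,\Phi\pi_1$ is $\Phi G/\mathrm{Im}(\mu^\Phi_U)$, which is exactly what (3) asserts.

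For $(3)\Rightarrow(4)$, I factor a precrossed module $\partial:U\to G$ through its image $U\epi V\hookrightarrow G$ and apply~(3) to the short exact sequence $V\mono G\epi G/V$. Condition~(3) forces $\Phi$ to preserve epimorphisms (coequalisers are epis), so the induced split epimorphism $G\ltimes U\epi G\ltimes V$ (split over $G$) restricts to an epimorphism on kernels $\Phi_G U\epi\Phi_G V$; this yields $\mathrm{Im}(\mu^\Phi_\partial)=\mathrm{Im}(\mu^\Phi_V)$ and hence $\pi_1(\Phi^{\sf PCr}(\partial))=\Phi(\pi_1(\partial))$. For $(4)\Rightarrow(5)$ I observe that $\pi_0$ of a simplicial group $G_\bullet$ depends only on its $1$-truncation: the section $s_0$ gives $G_1\cong G_0\ltimes\ker d_1$, so one extracts a precrossed module $\partial_{01}:\ker d_1\to G_0$ with $\pi_1(\partial_{01})=\pi_0(G_\bullet)$, and a direct unwinding identifies $\pi_0(\Phi G_\bullet)$ with $\pi_1(\Phi^{\sf PCr}(\partial_{01}))$, so (4) yields (5). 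The implication $(5)\Rightarrow(2)$ is then immediate by applying~(5) to a free resolution $F_\bullet\overset{\sim}\epi G$, since by definition ${\sf L}_0\Phi(G)=\pi_0(\Phi F_\bullet)$ and $\pi_0 F_\bullet=G$.

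The closing implication $(2)\Rightarrow(1)$ is the delicate one. My plan is to prove the general fact that ${\sf L}_0\Phi$ is always right exact, so that (2) automatically transfers right exactness from ${\sf L}_0\Phi$ to $\Phi$. To this end I would resolve a right exact diagram $G'\Rightarrow G\to H$ by a levelwise \emph{split} right exact diagram of free simplicial groups, for instance via the comonadic resolution attached to the free-group adjunction: since split right exact diagrams are absolute, $\Phi$ preserves them at each level, so $\Phi F''_\bullet$ is the coequaliser in $s{\sf Gr}$ of $\Phi\alpha_0,\Phi\alpha_1$, and applying $\pi_0$ preserves this coequaliser because $\pi_0:s{\sf Gr}\to{\sf Gr}$ is a left adjoint to the constant-simplicial-group functor. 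The main obstacle I anticipate is arranging the three resolutions into a compatible levelwise split right exact diagram and confirming---using the homotopy-invariance of ${\sf L}_0\Phi$ under the choice of free resolution recalled in Section~\ref{section-preliminaries}---that the resulting right exactness does not depend on this choice.
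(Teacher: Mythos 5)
Your cycle is correct, but it closes the loop differently from the paper, and the difference is concentrated in the two implications involving (6) and (2). The paper first proves (2),(3),(4),(5) equivalent — its (2)$\Rightarrow$(3) takes a cofibrant replacement of the nerve $E_\bullet(U\hookrightarrow G)$ of the crossed module, uses that (2) forces epi-preservation, and finishes with a diagram chase on Moore complexes — and then gets (1) from (4) via the equivalence of categories ${\sf SplCoupEpi}\simeq{\sf PCr}$, with (6)$\Rightarrow$(1) handled by comparing coequalizers. You instead prove (6)$\Rightarrow$(3) by the clean identification $G\times_H G\cong G\ltimes U$ (this is fine: by the argument inside Lemma \ref{lemma_Keune_diagram}, the coequalizer of the split pair $\Phi(G\ltimes U)\rightrightarrows\Phi G$ is $\Phi G/\Im(\mu^\Phi_U)$ with the image automatically normal), and you make (2)$\Rightarrow$(1) the hard step by proving the general fact that ${\sf L}_0\Phi$ is always right exact — which is essentially the classical Keune/Tierney--Vogel argument that the paper only cites (``cf.'') rather than reproduces. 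Your plan for that step does work, and the obstacle you flag is resolved precisely by Lemma \ref{lemma_Keune_diagram}(4): since $\varphi$ is onto and $(\alpha_0,\alpha_1):G'\to G\times_{G''}G$ is onto, one can choose set-theoretic maps $s:G''\to G$ and $t:G\to G'$ with $\varphi s={\sf id}$, $\alpha_0 t={\sf id}$, $\alpha_1 t=s\varphi$, so the underlying-set diagram of a right exact diagram is a split fork; applying the comonad $FU$ of the free-group adjunction then yields split forks of free groups at every level of the functorial resolutions, these are absolute coequalizers and hence preserved by $\Phi$, and $\pi_0$ (a left adjoint to the constant functor) preserves the resulting coequalizer in $s{\sf Gr}$; independence of the choice of resolution is the homotopy-invariance already recorded in Section \ref{section-preliminaries}. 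What the paper's route buys is self-containment with only elementary diagram chases and no appeal to absolute coequalizers; what yours buys is the stronger standalone statement that ${\sf L}_0\Phi$ is right exact for an arbitrary functor $\Phi$, from which (2)$\Rightarrow$(1) is formal. One small wording caveat: in your (3)$\Rightarrow$(4) the epimorphism $G\ltimes U\epi G\ltimes V$ need not split as a group homomorphism; what you actually use (and all you need, as in the paper) is that it is a map of split extensions over $G$, so epi-preservation plus compatibility with the retractions to $\Phi G$ gives surjectivity of $\Phi_GU\to\Phi_GV$.
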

\begin{proof} 
First we prove that (2),(3),(4),(5) are equivalent.

(2)$\Rightarrow$(3). Since $G\epi H$ is an epimorphism we can choose free resolutions $F_\bullet \epi G$ and $F_\bullet' \epi H$ and the lifting $F_\bullet \to F_\bullet'$ of the epimorphism such that $F_n\to F'_n$ is an epimorphism for any $n$. For example, we can choose the bar resolutions. An epimorphism between free groups splits. It follows that $\Phi(F_0)\to \Phi(F_0')$ is a split epimorphism. Therefore ${\sf L}_0\Phi G \to {\sf L}_0 \Phi H$ is an epimorphism, and hence, $\Phi G\epi \Phi H$ is an epimorphism. So, $\Phi$ is epi-preserving.

Now consider a crossed module $\partial: U\hookrightarrow G$ and its classifying space $E_\bullet$ as in Section \ref{subsection-precrossed-modules}. Take a cofibrant replacement by a free simplicial group $F_\bullet \overset{\sim}\epi E_\bullet.$
Then $F_\bullet$ is a free resolution of $H.$ We obtain a diagram
\begin{equation}\label{diagram-map-of-resolutions}
\begin{tikzcd}
\Phi F_1\arrow[r,shift left = 1mm] \arrow[r,shift left = -1mm] \arrow[d,twoheadrightarrow] & \Phi F_0 \arrow[d,twoheadrightarrow]  
\\
\Phi(G\ltimes U)\arrow[r,shift left = 1mm] \arrow[r,shift left = -1mm] & \Phi G 
\end{tikzcd}
\end{equation}
 If we set $N={\sf Ker}(d_1:F_1\to F_0),$ the map $s_0:F_0\to F_1$ induces an isomorphism $F_1\cong F_0\ltimes N.$ Note that $d_1:G\ltimes U \to G$ is just the projection but $d_0=\mu_U:G\ltimes U\to G.$

 After switching to Moore complexes, from \eqref{diagram-map-of-resolutions} we obtain the following diagram
$$ 
\begin{tikzcd}
\Phi_{F_0} N\arrow[r] \arrow[d,twoheadrightarrow] & \Phi F_0 \arrow[r,twoheadrightarrow] \arrow[d,twoheadrightarrow] & {\sf L}_0\Phi H\arrow[d,"\cong"]  \\
\Phi_G U\arrow[r," \mu_U^\Phi"] & \Phi G \arrow[r,twoheadrightarrow] & \Phi H,
\end{tikzcd}
$$
where the upper row is exact, vertical maps are epimorphisms and the right-hand vertical map is an isomorphism. Simple diagram chasing shows that the lower row is also exact.

(3)$\Rightarrow$(4). Take a precrossed module $\partial: U\to G.$ Set $I:=\Im \partial$ and $H=\pi_1 \partial.$ Note that   (3) implies that $\Phi$ is epi-preserving. It follows that $\Phi_G U \to \Phi_G I$ is surjective. Then we obtain the diagram
$$
\begin{tikzcd}
\Phi_G U\arrow[r,"\mu^\Phi_\partial"]\arrow[d,twoheadrightarrow] & \Phi G\arrow[r,twoheadrightarrow]\arrow[d,equal] & \pi_1 (\Phi^{\sf PCr}(\partial))\arrow[d] \\
 \Phi_G I\arrow[r,"\mu^\Phi_I"] & \Phi G\arrow[r,twoheadrightarrow] & \Phi H.
\end{tikzcd}
$$  
The upper row is exact by definition. The lower row is exact by assumption. Then the five-lemma implies that the right hand map is an isomorphism.

(4)$\Rightarrow$(5). For any simplicial group $G_\bullet$ one can construct a precrossed module $N_{\leq 1}(G_\bullet)=(\partial:N_1\to G_0),$ where $N_1={\sf Ker}(d_1:G_1\to G_0)$ and $\partial=d_0|_{N_1}.$ This gives a functor $N_{\leq 1}:{\sf sGr}\to {\sf PCr}$ such that $\pi_1( N_{\leq 1}(G_\bullet))=\pi_0(G_\bullet).$ It is easy to check that the diagram 
$$
\begin{tikzcd}
{\sf sGr}\arrow[r,"N_{\leq 1}"]\arrow[d,"\Phi"] & {\sf PCr}\arrow[d,"\Phi^{\sf PCr}"] \\
{\sf sGr}\arrow[r,"N_{\leq 1}"] & {\sf PCr}
\end{tikzcd}
$$ 
is commutative. The assertion follows. 

(5)$\Rightarrow$(2). Obvious.

So we proved that (2),(3),(4),(5) are equivalent. 

(1)$\Leftrightarrow$(4). The languages of splitted couples of epimorphisms and precrossed modules are equivalent. Let us give translations.  For a splitted  couple of epimorphisms $\alpha: G\Rightarrow H $ we denote by ${\sf N}(\alpha)$ the precrossed module 
$ \tilde \alpha_0: N\to H,$ where $\tilde \alpha_0$ is the restriction of $\alpha_0$ and $H$ acts on $N$ by the formula $n^h=n^{\alpha^0 h}.$ It is easy to see that the group $G$ can be reconstructed from the precrossed module because there is an isomorphism $H\ltimes N\cong G, (h,n)\mapsto \alpha^0h\cdot n.$
On the other hand, if we have a precrossed module $\partial: N\to H$ we denote by ${\sf C}(\partial)$ the splitted couple of epimorphisms $\alpha :H\ltimes N \Rightarrow H$, where 
$$\alpha_0(h,n)=h\cdot n, \hspace{0.5cm} \alpha_1(h,n)=h,  \hspace{0.5cm}\alpha^0(h)=(h,1).$$ It is easy to check that these two functors ${\sf N}:{\sf SplCoupEpi}\to {\sf PCr}$ and ${\sf C}:  {\sf PCr} \to {\sf SplCoupEpi}$ induce equivalences of categories of splitted couples of epimorphisms and precrossed modules 
$${\sf SplCoupEpi} \simeq {\sf PCr}.$$ 
There is a commutative diagram of functors
$$
\begin{tikzcd}
{\sf SplCoupEpi} \arrow[d,"\Phi"] \arrow[r,"{\sf N}"] & {\sf PCr}\arrow[d,"\Phi^{\sf PCr}"] \\
{\sf SplCoupEpi} \arrow[r,"{\sf N}"]  & {\sf PCr}
\end{tikzcd}.
$$   
Coequalizer of a splitted couple of epimorphisms $\alpha$ is isomorphic to $\pi_1({\sf N}(\alpha))$ by Lemma \ref{lemma_Keune_diagram}. The assertion follows. 

Thus we proved that (1),(2),(3),(4),(5) are equivalent. 

(1)$\Rightarrow$(6). The triple $\alpha=({\sf pr}_0, {\sf pr}_1, \Delta),$ where ${\sf pr}_0,{\sf pr}_1: G\times_H G\to G$ are projections and $\Delta:G\to G\times_H G$ is a diagonal map, forms a splitted couple of epimorphisms $\alpha:G\times_H G \Rightarrow G$. The assertion follows. 

(6)$\Rightarrow$(1). (6) implies that $\Phi$ sends epimorphisms to epimorphisms. Let $G'\Rightarrow G \to G''$ be a right exact diagram. Lemma \ref{lemma_Keune_diagram} implies that $G'\to G\times_{G''} G$ is an epimorphism. Hence $\Phi(G') \to \Phi(G\times_{G''} G)$ is an epimorphism. It follows that the coequalizer of $\Phi(G')\rightrightarrows \Phi(G)$ equals to the coequalizer of $\Phi(G\times_{G''} G)\rightrightarrows \Phi(G).$ The assertion follows. 
\end{proof}

\subsection{Right exact localizations and equivariantly local groups}


In this part of the paper we introduce a convenient reformulation of the property of localization functor being right exact in terms of locality of quotients of local groups by local normal subgroup of special kind (Theorem \ref{th_s-d-l}), which will be our main tool for working with right exact localizations.

Let $L=(L,\eta)$ be a localization on the category of groups and $A$ be a local group with respect to $L.$ If we identify $A=LA,$ we obtain that the functor $L_A$ acts from the category of $A$-groups to itself 
$$L_A: A\text{-}{\sf Gr}\longrightarrow A\text{-}{\sf Gr}$$
such that 
$$L(A\ltimes U)=A\ltimes L_AU.$$
Moreover, there is a natural map $U\to L_AU$ and $L_A$ is a localization on the category of $A$-groups. We say that an $A$-group $U$ is equivariantly local, if it is local with respect to $L_A.$ Note that $U$ is equivariantly local if and only if $A\ltimes U$ is local.

\begin{Lemma}\label{lemma_ker_local}
If $A$ and $B$ are local groups and $f:A\to B$ is a homomorphism, then ${\sf Ker} f$ is equivariantly local. 
\end{Lemma}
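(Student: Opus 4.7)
The plan is to exhibit $A \ltimes {\sf Ker}\, f$ as a pullback of local groups and invoke the standard fact that the full subcategory of $L$-local groups is closed under limits in ${\sf Gr}$.

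First I would construct an isomorphism of groups
$$\phi\colon A \times_B A\ \xrightarrow{\ \cong\ }\ A \ltimes {\sf Ker}\, f,\qquad (a_1,a_2) \longmapsto (a_2,\ a_2^{-1}a_1),$$
where $A\times_B A = \{(a_1,a_2)\in A\times A\mid f(a_1)=f(a_2)\}$ is the pullback of $f$ along itself and $A$ acts on ${\sf Ker}\, f$ by conjugation. Observe that $a_2^{-1}a_1$ lies in ${\sf Ker}\, f$ because $f(a_1)=f(a_2)$. A short direct check using the paper's right-action convention $(a,u)(a',u')=(aa',\,u^{a'}u')$ with $u^{a'}=(a')^{-1}ua'$ shows that $\phi$ is a homomorphism, and $(a,u)\mapsto (au,a)$ is its two-sided inverse.

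Second I would argue that $A\times_B A$ is local. This rests on the general principle that for any localization $L$ on ${\sf Gr}$, the full subcategory of $L$-local groups is closed under arbitrary limits in the ambient category: given a diagram of local groups $D_j$ with limit $P=\lim D_j$ in ${\sf Gr}$, the identity ${\sf Hom}(X,D_j)={\sf Hom}(LX,D_j)$ for every group $X$ propagates through the limit to yield ${\sf Hom}(X,P)={\sf Hom}(LX,P)$, whence $\eta_P\colon P\to LP$ is an isomorphism. Applied to the cospan $A\xrightarrow{f}B\xleftarrow{f}A$, which consists of local groups by hypothesis, this gives that $A\times_B A$ is local.

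Combining the two steps, the isomorphism $\phi$ forces $A\ltimes {\sf Ker}\, f$ to be local, which by the definition introduced immediately before the lemma says exactly that ${\sf Ker}\, f$ is equivariantly local as an $A$-group. I do not anticipate any substantial obstacle here; the only point requiring care is tracking the semidirect-product conventions closely enough that the isomorphism $\phi$ identifies the correct pair of group structures on the nose.
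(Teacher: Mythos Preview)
Your proposal is correct and follows essentially the same argument as the paper: identify $A\ltimes {\sf Ker}\,f$ with the pullback $A\times_B A$ and use closure of local groups under limits. The only cosmetic difference is that the paper writes the isomorphism as $(a_1,a_2)\mapsto (a_1,a_1^{-1}a_2)$ rather than your $(a_1,a_2)\mapsto (a_2,a_2^{-1}a_1)$, which amounts to swapping the two pullback factors.
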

\begin{proof}
The class of local groups is closed under small limits. Thus $A\times_B A$ is also local. The map $A\times_B A\to A\ltimes {\sf Ker} f$ given by $(a_1,a_2)\mapsto (a_1,a_1^{-1}a_2)$ is an isomorphism. Hence, $A\ltimes {\sf Ker} f $ is local. Therefore, ${\sf Ker} f$ is equivariantly local. 
\end{proof}

\begin{Corollary} 
If $A$ is a local group, then $A$ is equivariantly local as $A$-group. 
\end{Corollary}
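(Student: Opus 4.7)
The plan is to deduce the corollary directly from Lemma \ref{lemma_ker_local} by choosing a particularly simple pair of local groups and a particularly simple homomorphism between them. Specifically, I would apply the lemma to the unique homomorphism $f\colon A\to 1$ from $A$ to the trivial group.

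For this to work, the first step is to check that the trivial group $1$ is local. Since $1$ is the terminal object of ${\sf Gr}$, it is the limit of the empty diagram, and the class of local groups is closed under small limits (this closure is exactly the fact invoked at the start of the proof of Lemma \ref{lemma_ker_local}). Hence $1$ is local.

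Given that both $A$ and $1$ are local, Lemma \ref{lemma_ker_local} applied to $f$ yields that ${\sf Ker}(f)$ is equivariantly local. But ${\sf Ker}(f)=A$, and the $A$-group structure on $A$ as a normal subgroup of itself is by conjugation, per the convention fixed in Subsection \ref{subsection-precrossed-modules}; this is precisely the $A$-group structure intended in the statement of the corollary. So $A$ is equivariantly local as an $A$-group.

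There is essentially no obstacle here: the corollary is a one-line specialization of the preceding lemma, and the only external input is the locality of the terminal group, which is a formal consequence of completeness of the full subcategory of local groups.
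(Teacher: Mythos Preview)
Your proof is correct and is exactly the intended argument: the paper states the corollary without proof as an immediate consequence of Lemma~\ref{lemma_ker_local}, and the specialization $f\colon A\to 1$ (using that $1$ is local as the empty limit) is the natural way to extract it.
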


\begin{Corollary} 
If $A$ is a  local group and $U\triangleleft A$  such that $A/U$ is local, then $U$ is equivariantly local.
\end{Corollary}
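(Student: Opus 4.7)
The plan is to reduce this immediately to Lemma \ref{lemma_ker_local}. Consider the quotient homomorphism $f : A \to A/U$. By hypothesis, both $A$ and $A/U$ are local, so $f$ is a homomorphism between local groups. Lemma \ref{lemma_ker_local} then says that $\ker f$ is equivariantly local as an $A$-group. Since $\ker f = U$, this is exactly what we want.

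The only thing to double-check is that the $A$-action on $\ker f$ used in Lemma \ref{lemma_ker_local} agrees with the given $A$-action on $U$ (the conjugation action, since $U \triangleleft A$). In the proof of Lemma \ref{lemma_ker_local} the relevant action is the one coming from the isomorphism $A \times_B A \cong A \ltimes \ker f$, $(a_1, a_2) \mapsto (a_1, a_1^{-1} a_2)$, so $a \in A$ acts on $u \in \ker f$ by $u \mapsto a^{-1} u a$, i.e.\ by conjugation. This matches the standing convention from Section \ref{subsection-precrossed-modules} that a normal subgroup $U \triangleleft A$ is always regarded as an $A$-group via conjugation.

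There is no real obstacle here: the corollary is essentially a converse reading of Lemma \ref{lemma_ker_local} specialized to the surjection $A \epi A/U$. The entire proof reduces to one sentence invoking that lemma, together with the trivial verification that the two actions of $A$ on $U$ coincide.
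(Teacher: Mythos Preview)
Your proof is correct and is exactly the intended argument: the paper leaves this corollary without proof precisely because it follows in one line from Lemma \ref{lemma_ker_local} applied to the quotient map $A\epi A/U$. Your additional check that the $A$-action on $\ker f$ coming from the isomorphism $A\times_{A/U}A\cong A\ltimes U$ is conjugation is a helpful sanity check, though in the paper's conventions this is simply the standing meaning of ``equivariantly local'' for a normal subgroup.
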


\begin{Theorem}\label{th_s-d-l} Let $L$ be a localization on the category of groups. Then the following statements are equivalent.
\begin{enumerate}
\item The functor $L$ is right exact.
\item For a local group $A$ and any equivariantly local normal subgroup $U$ the quotient $A/U$ is local.  

\item For any slimplicial group  with local components $G_\bullet$ the group $\pi_0(G_\bullet)$ is local.
\end{enumerate} 
\end{Theorem}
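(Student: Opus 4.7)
The plan is to prove the cycle $(1)\Rightarrow(2)\Rightarrow(3)\Rightarrow(1)$, using Theorem~\ref{theorem_s.d.f} as a black box.

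For $(1)\Rightarrow(2)$ I apply Theorem~\ref{theorem_s.d.f}(4) to the (automatic) crossed module $\partial\colon U\hookrightarrow A$. Since both $A$ and $A\ltimes U$ are already local, one reads off $L_A U=\ker(A\ltimes U\to A)=U$, and the induced map $\mu_\partial^L\colon L_A U\to LA$ is just the inclusion $U\hookrightarrow A$. Hence $\pi_1(L^{\sf PCr}(\partial))=A/U$, and Theorem~\ref{theorem_s.d.f}(4) yields $L(A/U)=A/U$, so $A/U$ is local.

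For $(3)\Rightarrow(1)$ I verify condition (5) of Theorem~\ref{theorem_s.d.f}. Since $L$ is the reflection onto the subcategory of local groups, every colimit there is obtained by applying $L$ to the corresponding colimit in ${\sf Gr}$; in particular
\[L(\pi_0 G_\bullet)\;=\;L\bigl({\sf coeq}_{\sf Gr}(LG_1\rightrightarrows LG_0)\bigr)\;=\;L(\pi_0 LG_\bullet).\]
But $LG_\bullet$ has local components, so (3) says $\pi_0 LG_\bullet$ is already local; the right-hand side therefore equals $\pi_0 LG_\bullet$, and this is exactly the isomorphism required in Theorem~\ref{theorem_s.d.f}(5).

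The bulk of the work is $(2)\Rightarrow(3)$. Given $G_\bullet$ with local components, write $N_1=\ker d_1\subseteq G_1$ and let $\partial=d_0|_{N_1}\colon N_1\to G_0$ be the Moore boundary, so that $\pi_0 G_\bullet=G_0/\Im\partial$. The crucial identification is
\[\ker\partial\;=\;\ker d_0\cap\ker d_1\;=\;\ker\bigl((d_0,d_1)\colon G_1\longrightarrow G_0\times G_0\bigr).\]
Since $G_1$ and $G_0\times G_0$ are local (the local subcategory is closed under finite products, being closed under limits), Lemma~\ref{lemma_ker_local} tells us that $\ker\partial$ is equivariantly local inside $G_1$. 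Applying (2) to the pair $(G_1,\ker\partial)$ shows that $G_1/\ker\partial$ is local; the isomorphism $G_1/\ker\partial\xrightarrow{\sim}G_0\ltimes\Im\partial$ given by $(g,n)\mapsto(g,\partial n)$ then says that $\Im\partial$ is equivariantly local in $G_0$. A second application of (2), this time to the pair $(G_0,\Im\partial)$, produces $G_0/\Im\partial=\pi_0 G_\bullet$ local.

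The main obstacle is the step $(2)\Rightarrow(3)$: one must recognise $\ker\partial$ as a kernel between two local groups in order to invoke Lemma~\ref{lemma_ker_local}, after which assumption (2) can be chained twice along the Moore decomposition.
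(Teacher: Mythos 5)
Your proof is correct, but it follows a different implication cycle than the paper. The paper proves $(1)\Rightarrow(2)$ and $(2)\Rightarrow(1)$ directly, then $(1)\Rightarrow(3)$ and $(3)\Rightarrow(2)$: its $(2)\Rightarrow(1)$ goes through the identity $L_{LG}\circ L_G=L_G$, the locality of images of maps between local groups, and the fact that $L'\colon{\sf Gr}\to{\sf Loc}$ preserves cokernels, landing in criterion (3) of Theorem~\ref{theorem_s.d.f}; its $(3)\Rightarrow(2)$ applies $L$ levelwise to the nerve $E_\bullet(U\hookrightarrow A)$ of the crossed module. You instead close the cycle $(1)\Rightarrow(2)\Rightarrow(3)\Rightarrow(1)$: your $(2)\Rightarrow(3)$ is a genuinely new argument not in the paper, factoring the Moore boundary $\partial\colon N_1\to G_0$ through its kernel and image, using Lemma~\ref{lemma_ker_local} to see $\ker\partial=\ker\bigl((d_0,d_1)\colon G_1\to G_0\times G_0\bigr)$ is equivariantly local, and then chaining hypothesis (2) twice via the isomorphism $G_1/\ker\partial\cong G_0\ltimes\Im\partial$; your $(3)\Rightarrow(1)$ verifies criterion (5) by the reflective-subcategory colimit argument, which plays the same role as the paper's cokernel-preservation step but is deployed differently. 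What your route buys is a direct, self-contained passage from the quotient condition (2) to the simplicial condition (3), bypassing the paper's more involved $(2)\Rightarrow(1)$; what it costs is that you lean on a few standard but unstated verifications: that $\Im\partial=\partial(N_1)$ is normal in $G_0$ (needed before applying (2) the second time; this is exactly the normality claim in the proof of Lemma~\ref{lemma_Keune_diagram}), and that the abstract isomorphisms you produce in $(1)\Rightarrow(2)$ and $(3)\Rightarrow(1)$ really are the natural morphisms of Theorem~\ref{theorem_s.d.f}(4) and (5) (which holds by naturality of $\eta$ together with the fact that all maps involved agree after precomposition with the epimorphisms $A\to A/U$ and $LG_0\to\pi_0(LG_\bullet)$). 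These are minor and do not affect correctness.
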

\begin{proof}
(1)$\Rightarrow$ (2). Follow from (2)$\Rightarrow$(3) of Theorem \ref{theorem_s.d.f}. 

(2)$\Rightarrow$ (1).  
For any group $G$ we have $L_{LG} \circ L_G=L_G$ because there are isomorphisms $$LG\ltimes L_G U = L(G\ltimes U)=LL(G\ltimes U)=L(LG\ltimes L_G U)=LG\ltimes L_{LG}L_G U,$$
which are compatible with maps to $LG.$
It follows that $L_GU$ is $LG$-local for any $G$-group $U.$

Lemma \ref{lemma_ker_local} together with the assumption implies that the image of a homomorphism $A\to B$ between two local groups is local. It follows that if $U$ and $V$ are two equivariantly local $A$-groups and $U\to V$ is an $A$-group morphism, then its image is also equivariantly local. 

Consider a short exact sequence $U\mono G\epi H.$  Then the groups $LG$ and $L_GU$ are $LG$-local, and hence, the image $I=\Im (L_G U \to LG)$ is a $LG$-local normal subgroup. Therefore $LG/I$ is local by assumption. Hence $LG/I$ is the cokernel of the map $L_GU\to LG$ in the category of local groups. On the other hand the functor $L$ considered as a functor to the category of local groups $L':{\sf Gr}\to {\sf Loc}$ commutes with cokernels because it is left adjoint to the embedding. Then $LG/I\cong LH.$ The assertion follows from (3)$\Rightarrow$(2) of Theorem \ref{theorem_s.d.f}. 

(1)$\Rightarrow$(3). This follows from (1)$\Rightarrow$ (5) of Theorem \ref{theorem_s.d.f}.

(3)$\Rightarrow$(2). Let $A$ be a local group and $U$ be its equivariantly local normal subgroup. Consider the nerve $E_\bullet=E_\bullet(U\hookrightarrow A)$ of the crossed module $U\hookrightarrow A.$ Set $G_\bullet:=LE_\bullet.$ Since $A$ is local and $U$ is equivarianly local, we have $G_0=E_0=A$ and $G_1=E_1=A\ltimes U.$ Therefore $\pi_0(G_\bullet)=\pi_0(E_\bullet)=A/U,$ and hence, $A/U$ is local.
\end{proof}

\begin{Corollary}\label{cor_image_of_local} If $L$ is a right exact localization and $A$ and $ B$ are local groups, then the image of a homomorphism $A\to B$ is local.   
\end{Corollary}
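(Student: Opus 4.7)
The plan is to reduce the statement to the characterization of right exactness given in Theorem \ref{th_s-d-l}(2), by realizing the image of $f \colon A \to B$ as a quotient of $A$ by an equivariantly local normal subgroup.

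First, I would note that ${\sf Ker}(f)$ is a normal subgroup of the local group $A$, and that Lemma \ref{lemma_ker_local} already tells us it is equivariantly local as an $A$-group (since $A$ and $B$ are both local). So $A$ is a local group equipped with an equivariantly local normal subgroup ${\sf Ker}(f) \triangleleft A$.

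Next, because $L$ is assumed to be right exact, Theorem \ref{th_s-d-l} (condition (2)) applies directly and yields that the quotient $A/{\sf Ker}(f)$ is local. The first isomorphism theorem supplies a canonical group isomorphism $A/{\sf Ker}(f) \cong \Im(f)$, and transporting the local structure along this isomorphism shows that $\Im(f)$ is local, which is exactly the desired conclusion.

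There is no real obstacle here: the corollary is essentially an immediate consequence of the machinery already developed. The only subtle point worth flagging is that equivariant locality of the kernel (Lemma \ref{lemma_ker_local}) holds for every localization, whereas the passage from equivariant locality of $U$ to locality of the quotient $A/U$ is precisely what right exactness buys us via Theorem \ref{th_s-d-l}. Combining these two ingredients with the first isomorphism theorem completes the argument.
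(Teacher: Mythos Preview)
Your proof is correct and follows exactly the same route as the paper: the paper's proof simply says ``It follows from Lemma~\ref{lemma_ker_local} and Theorem~\ref{th_s-d-l},'' and indeed the same argument (equivariant locality of the kernel, then locality of the quotient via Theorem~\ref{th_s-d-l}(2), hence locality of the image via the first isomorphism theorem) already appears verbatim inside the proof of Theorem~\ref{th_s-d-l}, in the implication (2)$\Rightarrow$(1).
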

\begin{proof}
It follows from Lemma \ref{lemma_ker_local} and Theorem \ref{th_s-d-l}. 
\end{proof}
\begin{Remark}
From Theorem \ref{th_s-d-l} it is clear that if for the localization $L$ the category of $L$-local groups forms a variety (in sense of universal algebra), then localization $L$ is right exact. Indeed, since variety is closed under epimorphic images, every quotient of $L$-local group is $L$-local. In particular, nilpotent reduction functor $G\mapsto G/\gamma_n$ is an example of right exact localization.
\end{Remark}

\section{\bf Right exact localizations of nilpotent groups}\label{section-right-exact-nilpotent}

In the following section we establish a behaviour of right exact localizations with respect to central extensions. Using the properties of this behaviour we are able to answer some questions of \cite{Casacuberta} for a class of right exact localizations.

We denote by ${\sf Z}(G)$ the center of a group $G.$ We say that an epimorphism $f: G\epi H$ is a  central extension if ${\sf Ker}(f)\subseteq {\sf Z}(G).$

\begin{Lemma}\label{lemma_image_center}
Let $L$ be a localization on the category of groups and $f:G\to H$ be a homomorphism. Then  ${\rm I}{\sf m} (f) \subseteq {\sf Z}(H) $ implies $\Im (Lf) \subseteq {\sf Z}(LH).$
\end{Lemma}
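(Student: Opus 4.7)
The plan is to exploit the naturality of conjugation together with the universal property of $\eta_H$. The hypothesis $\Im(f) \subseteq Z(H)$ is equivalent to saying that for every $h \in H$ the inner automorphism $c_h \colon H \to H$ satisfies $c_h \circ f = f$. Applying the functor $L$ to this identity yields $L(c_h) \circ Lf = Lf$ for every $h \in H$.

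The next step is to identify $L(c_h)$ with conjugation by $\eta_H(h)$ on $LH$. A direct computation using that $\eta_H$ is a homomorphism shows that the inner automorphism of $LH$ induced by $\eta_H(h)$ satisfies the same compatibility with $\eta_H$ as $L(c_h)$ does, namely $\eta_H(h)\, \eta_H(h')\, \eta_H(h)^{-1} = \eta_H(c_h(h'))$; since $LH$ is $L$-local, the uniqueness part of the universal property forces these two endomorphisms of $LH$ to agree. Plugging this back into the displayed equality yields that every element of $\eta_H(H)$ commutes with every element of $\Im(Lf)$.

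To upgrade this elementwise commutation to centrality in all of $LH$, I would fix $y \in \Im(Lf)$ and consider conjugation by $y$ as an endomorphism $c_y \colon LH \to LH$. The previous paragraph gives $c_y \circ \eta_H = \eta_H = \mathrm{id}_{LH} \circ \eta_H$, so $c_y$ and $\mathrm{id}_{LH}$ are two endomorphisms of the local group $LH$ whose restrictions along $\eta_H$ coincide. A second application of the universal property forces $c_y = \mathrm{id}_{LH}$, so $y$ lies in $Z(LH)$. This gives $\Im(Lf) \subseteq Z(LH)$, as required.

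The only slightly subtle point is the identification $L(c_h) = c_{\eta_H(h)}$; apart from this, the argument is a clean two-step application of the universal property of localization, and I do not foresee serious obstacles.
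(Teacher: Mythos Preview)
Your proof is correct and follows essentially the same approach as the paper's. Both arguments hinge on the universal property of $\eta$ to show that an inner automorphism of $LH$ agreeing with the identity after precomposition with a localization map must be the identity; the only cosmetic difference is that the paper first records the auxiliary fact $\eta_H(Z(H))\subseteq Z(LH)$ and then invokes the bijection $\eta_G^*:\operatorname{Hom}(LG,LH)\to\operatorname{Hom}(G,LH)$, whereas you instead make the identification $L(c_h)=c_{\eta_H(h)}$ explicit and use $\eta_H^*$ twice.
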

\begin{proof}
We claim that that $\eta({\sf Z}(H))\subseteq {\sf Z}(LH),$ where $\eta=\eta_H:H\to LH$ is the coaugmentation.  Indeed for any $z\in {\sf Z} (H)$  we obtain that the inner automorphism $\varphi_{\eta z}:LH\to LH$ satisfies $\varphi_{\eta z} \eta = \eta,$ and hence, $\varphi_{\eta z}={\sf id}$ and $\eta z\in {\sf Z}(LH).$ It follows that $\Im (\eta_H f)=\Im((Lf) \eta_G)\subseteq {\sf Z}(LH).$ Take an element 
$b\in LH$ and denote by  $ \varphi_b:LH\to LH$ the corresponding inner automorphism. Since $\Im (\eta_Hf)\subseteq {\sf Z}(LH),$ we obtain $\varphi_b  (Lf) \eta_G=(Lf) \eta_G.$ Since $LH$ is local, we have that $\eta_G^*:{\sf Hom}(LG,LH)\cong {\sf Hom}(G,LH),$ and hence $\varphi_b  (Lf) =Lf$ for any $b\in LH.$ The assertion follows. 
\end{proof}

\begin{Lemma}[c.f. \cite{DwyerFarjoun}]\label{lemma_central_extension} Let $L$ be a right exact localization on the category of groups and $G\epi H$ be a central extension. Then $LG\epi LH $ is also a central extension and, if we set $K={\sf Ker}(G\epi H),$ the sequence $LK\to LG \to LH \to 1$ is exact.
\end{Lemma}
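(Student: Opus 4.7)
The plan is to establish two things: (a) $LG \to LH$ is an epimorphism whose kernel coincides with $\Im(LK \to LG)$, and (b) this common group lies in $Z(LG)$. Together these give the statement. The epimorphism part of (a) is immediate, since right exactness forces $L$ to preserve epimorphisms (see the proof of (2)$\Rightarrow$(3) in Theorem~\ref{theorem_s.d.f}). Set $N := \Im(L\iota : LK \to LG)$, where $\iota : K \hookrightarrow G$ is the inclusion. By centrality $K \subseteq Z(G)$, so Lemma~\ref{lemma_image_center} yields $N \subseteq Z(LG)$; this settles (b) and also guarantees $N$ is a normal subgroup of $LG$. Since $K \to G \to H$ is trivial, $N \subseteq {\sf Ker}(LG \to LH)$, so the heart of the proof is the reverse inclusion.

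For the reverse inclusion I will use Theorem~\ref{theorem_s.d.f}(6): $LH$ is the coequaliser of the two maps $L(G \times_H G) \rightrightarrows LG$ coming from the projections. Centrality of $K$ gives a group isomorphism $G \times K \cong G \times_H G$ via $(g, k) \mapsto (g, gk)$, under which the two projections become $\pi_G(g, k) = g$ and $\mu_K(g, k) = gk$. Thus it suffices to show that the two composite homomorphisms $L(G \times K) \rightrightarrows LG \epi LG/N$ given by $L\pi_G$ and $L\mu_K$ followed by the quotient agree. After precomposing with $\eta : G \times K \to L(G \times K)$, they become $(g, k) \mapsto \eta_G(g) N$ and $(g, k) \mapsto \eta_G(g)\eta_G(k) N$ respectively, and these agree because $\eta_G(k) \in N$. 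By the universal property of $L(G \times K)$, this equality on $\eta(G \times K)$ extends to all of $L(G \times K)$ provided the target $LG/N$ is local.

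The remaining and main technical step is therefore to show that $LG/N$ is local. I will invoke Theorem~\ref{th_s-d-l}(2): since $LG$ is local, it is enough to verify that $N$ is an equivariantly local normal subgroup. Right exactness enters here via Corollary~\ref{cor_image_of_local}, which shows that $N$ is local as a group, being the image of the homomorphism $LK \to LG$ between local groups. Centrality of $N$ in $LG$ makes the conjugation action of $LG$ on $N$ trivial, so $LG \ltimes N = LG \times N$, which is local as a finite product of local groups. Hence $N$ is equivariantly local; Theorem~\ref{th_s-d-l}(2) then delivers the locality of $LG/N$, completing the argument.
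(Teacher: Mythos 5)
Your argument is correct, but it follows a genuinely different route from the paper's, which is essentially two lines: since $K$ is central, $G\ltimes K=G\times K$, and since a localization commutes with finite products, $L_GK={\sf Ker}(L(G\times K)\to LG)\cong LK$; the exactness of $LK\to LG\to LH\to 1$ is then literally the exact-sequence criterion of Theorem \ref{theorem_s.d.f} applied to $K\mono G\epi H$, and centrality of the image of $LK\to LG$ comes from Lemma \ref{lemma_image_center}, exactly as in your step. You instead invoke the coequalizer criterion (6) of Theorem \ref{theorem_s.d.f} together with the isomorphism $G\times_H G\cong G\times K$, and use Corollary \ref{cor_image_of_local} plus Theorem \ref{th_s-d-l}(2) to show that $N=\Im(LK\to LG)$ is a central, equivariantly local normal subgroup of $LG$ with $LG/N$ local, after which locality lets you check the coequalizing identity on the image of $\eta$ only. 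Your version is longer, but it has a real advantage: it nowhere needs the assertion (used without proof in the paper) that $L$ commutes with finite products, relying only on closure of local groups under limits, Lemma \ref{lemma_image_center}, Corollary \ref{cor_image_of_local} and Theorem \ref{th_s-d-l}; it also exhibits $LH\cong LG/N$ explicitly. One sentence is worth adding at the end: since $LG\to LG/N$ equalizes the two maps $L(G\times_H G)\rightrightarrows LG$, the coequalizer property of $LH$ yields a map $LH\to LG/N$ under $LG$, whence ${\sf Ker}(LG\to LH)\subseteq N$; combined with the obvious inclusion $N\subseteq {\sf Ker}(LG\to LH)$ and the surjectivity of $LG\to LH$ this gives the exact sequence, while $N\subseteq {\sf Z}(LG)$ gives the centrality. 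Your write-up leaves this last assembling step implicit.
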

\begin{proof} We have an isomorphism $L_GK=LK$ because $G\ltimes K=G\times K$ and $L$ commutes with products. Hence the sequence $LK\to LG \to LH \to 1$ is exact by (1)$\Rightarrow$(2) of Theorem \ref{theorem_s.d.f}. Lemma \ref{lemma_image_center} implies that the image of $LK\to LG$ is in the center.    
\end{proof}

\begin{Theorem}\label{theorem_s.d._of_nilpotent}
Let $L$ be a right exact localization on the category of groups and $G$ be a nilpotent group of class $n.$ Then $LG$ is a nilpotent group of class at most $n.$ Moreover, if $G$ is a finite $p$-group, then $G\to LG$ is an epimorphism. 
\end{Theorem}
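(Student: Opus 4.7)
The plan is to prove the two assertions separately, each by induction, with Lemma \ref{lemma_central_extension} doing the heavy lifting.

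For the nilpotency preservation, I would induct on the class $n$. The cases $n\leq 1$ are trivial or reduce to the classical preservation of abelian groups by an arbitrary localization (cited in the introduction). For $n\geq 2$, consider the central extension $Z(G)\mono G\epi G/Z(G)$: since $G/Z(G)$ has class at most $n-1$, the inductive hypothesis gives that $L(G/Z(G))$ has class at most $n-1$, and Lemma \ref{lemma_central_extension} makes $LG\epi L(G/Z(G))$ a central extension, so $LG$ has class at most $n$.

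For the second assertion, I would induct on $|G|$. The case $|G|=1$ is trivial; the critical base case is $G\cong\ZZ/p$. Here Lemma \ref{lemma_central_extension} applied to $p\ZZ\mono\ZZ\epi\ZZ/p$, combined with preservation of abelian groups, identifies $L(\ZZ/p)$ with $L\ZZ/(p\cdot L\ZZ)$, an $\mathbb{F}_p$-vector space $V$. The universal property of $L$ then yields natural isomorphisms of abelian groups
\[
{\sf End}_{\sf Ab}(V) \;\cong\; {\sf Hom}_{\sf Gr}(L(\ZZ/p),L(\ZZ/p)) \;\cong\; {\sf Hom}_{\sf Gr}(\ZZ/p,V) \;\cong\; V,
\]
and a cardinality comparison forces $\dim_{\mathbb{F}_p}V\in\{0,1\}$; in either case the coaugmentation $\ZZ/p\to V$ is surjective (if it were zero, uniqueness in the universal property would force $V=0$). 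This is the main obstacle, since one has to extract from purely categorical data a rigid dimensional constraint on $L(\ZZ/p)$.

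For the inductive step with $|G|>p$, pick a central subgroup $N\subseteq Z(G)$ of order $p$ (existing since $Z(G)\neq 1$) and apply Lemma \ref{lemma_central_extension} to $N\mono G\epi G/N$. The resulting exact sequence $LN\to LG\to L(G/N)\to 1$, together with the inductive hypothesis for $N$ (via the $\ZZ/p$ base case) and $G/N$ (via $|G/N|<|G|$), allows writing any $x\in LG$ as $\eta_G(g)\cdot y$ with $g\in G$ lifting the image of $x$ in $L(G/N)$ and $y\in\Im(LN\to LG)=\Im(N\to G\to LG)$. Hence $x$ lies in the image of $G\to LG$, completing the induction.
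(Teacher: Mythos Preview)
Your proof is correct and follows the same inductive strategy as the paper, with Lemma \ref{lemma_central_extension} driving both the nilpotency statement and the $p$-group statement. The only difference is in the base case $L(\ZZ/p)\in\{0,\ZZ/p\}$: the paper simply cites \cite[Th.~2.3]{Libman} (which holds for arbitrary localizations), whereas you supply a self-contained cardinality argument via ${\sf End}(V)\cong V$; the inductive step and the snake-lemma/element-chase are equivalent.
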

\begin{proof}
It is known that for an abelian group $G$ its localization $LG$ is also abelian \cite{Libman} (it also  follows from Lemma \ref{lemma_image_center}). Then the result about nilpotent groups follows by induction from Lemma \ref{lemma_central_extension}. 

Now consider the second assertion. For any localization $L$ we have $L(\ZZ/p)\cong \ZZ/p$ or $L(\ZZ/p)=0$ (see \cite[Th.2.3]{Libman}). Here we use the fact that a localization on the category of groups induces a localization on the category of abelian groups. This gives the basis of induction. For the step of induction it is enough to prove that for a central extension $G\epi H$ such that ${\sf Ker}(G\epi H)\cong \ZZ/p,$ 
if $H\to LH$ is an epimorphism, then $G\to LG$ is an epimorphism. Denote by $I$ the image of $L(\ZZ/p)\to LG.$  Then Lemma \ref{lemma_central_extension} implies that $I\mono LG\epi LH$ is a short exact sequence. 
Therefore 
we obtain a morphism of short exact sequences
$$ 
\begin{tikzcd}
\ZZ/p\arrow[r,rightarrowtail]\arrow[d,twoheadrightarrow]  & G \arrow[r, twoheadrightarrow] \arrow[d] & H \arrow[d,twoheadrightarrow] \\
I\arrow[r,rightarrowtail] & LG \arrow[r, twoheadrightarrow] & LH.
\end{tikzcd}
$$ 
The snake lemma implies that the map $G\to LG$ is an epimorphism.  
\end{proof}

\section{\bf Examples of right exact localizations} \label{section_examples_s.d}

\subsection{Baumslag's $P$-localization}

Let $P$ be a set of prime numbers. We denote by $\ZZ[P^{-1}]$ the subring of $\mathbb Q,$ consisting of numbers whose denominator's prime factors are in $P.$ We denote by $L_{P}$ the localization functor with respect to the map $\ZZ\to \ZZ[P^{-1}].$  A group $A$ is local with respect to $L_{P}$ if and only if it is uniquely $P$-divisible i.e. the $p$-power map $A\to A,a\mapsto a^p$ is a bijection for any $p\in P$ (such groups are called $D$-groups in Baumslag's papers \cite{Baumslag1}, \cite{Baumslag2}). Note that on the subcategory of abelian groups Baumslag $P$-localization is just tensoring with $\ZZ[P^{-1}].$

\begin{Theorem}\label{th_Baumslag_l}
Baumslag's localization at a set of primes $P$ is right exact. 
\end{Theorem}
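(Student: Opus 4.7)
The strategy is to verify condition~(2) of Theorem~\ref{th_s-d-l}. Suppose $A$ is $L_P$-local and $U\triangleleft A$ is equivariantly local. Using the characterization of $L_P$-local groups as uniquely $P$-divisible groups, together with the fact that ``equivariantly local'' means ``$A\ltimes U$ is uniquely $P$-divisible,'' the task reduces to showing that for each $p\in P$ the $p$-th power map $A/U\to A/U$ is a bijection.

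Surjectivity is immediate: given $aU\in A/U$, pick the unique $a'\in A$ with $(a')^p=a$, whence $(a'U)^p=aU$. For injectivity, suppose $a\in A$ satisfies $a^p\in U$. The key is to pass to the semidirect product: the element $(1_A,a^p)\in A\ltimes U$ admits, by unique $P$-divisibility of $A\ltimes U$, a unique $p$-th root $(a',u')\in A\ltimes U$. A short induction on $p$ using the right-action semidirect-product law $(g,u)(g',u')=(gg',u^{g'}u')$ yields
\[
(a',u')^p=\bigl((a')^p,\ u'^{(a')^{p-1}}u'^{(a')^{p-2}}\cdots u'^{a'}u'\bigr).
\]
Comparing first coordinates gives $(a')^p=1$, and unique $P$-divisibility of $A$ forces $a'=1_A$. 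The formula then collapses to $u'^p=a^p$ in $A$, and unique $P$-divisibility of $A$ once more yields $u'=a$. Hence $a=u'\in U$, which is exactly the injectivity of the $p$-th power map on $A/U$. Theorem~\ref{th_s-d-l} then gives right exactness of $L_P$.

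Given Theorem~\ref{th_s-d-l}, the argument is essentially a mechanical semidirect-product calculation coupled with uniqueness of $p$-th roots in $A$, so I do not anticipate any substantive obstacle. The only conceptual step is recognizing that unique $p$-divisibility of the larger group $A\ltimes U$ is precisely what detects those elements of $A$ which become $p$-th powers in $A/U$; this is exactly what the equivariant-locality hypothesis in Theorem~\ref{th_s-d-l} is designed to supply.
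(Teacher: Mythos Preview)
Your strategy matches the paper's---verify condition~(2) of Theorem~\ref{th_s-d-l} by showing $A/U$ is uniquely $P$-divisible---and surjectivity of the $p$-th power map is indeed immediate. But your injectivity step has a genuine gap. You write ``for injectivity, suppose $a\in A$ satisfies $a^p\in U$'' and then deduce $a\in U$; this only shows that $A/U$ has no $p$-torsion. Injectivity of the $p$-th power map on the (in general non-abelian) quotient $A/U$ is the stronger statement that $a_1^p U=a_2^p U$ implies $a_1 U=a_2 U$, and since $a_1^p a_2^{-p}\neq (a_1 a_2^{-1})^p$ in a non-abelian group, the special case $a_2=1$ does not settle the general one. ``$p$-divisible with no $p$-torsion'' is strictly weaker than ``uniquely $p$-divisible'' outside the abelian (or nilpotent) setting.

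The repair is close to what you wrote and is essentially the paper's argument. Given $a_1^p=a_2^p u$ with $u\in U$, take the unique $p$-th root of $(a_2^p,u)\in A\ltimes U$; the first coordinate forces it to have the form $(a_2,v)$ for some $v\in U$. Now apply the homomorphism $\mu:A\ltimes U\to A$, $(a,w)\mapsto aw$ (this is the map $\mu_U$ of Section~\ref{section_right_exact-functors}): from $(a_2,v)^p=(a_2^p,u)$ one gets $(a_2 v)^p=a_2^p u=a_1^p$ in $A$, whence $a_2 v=a_1$ by unique $P$-divisibility of $A$, and so $a_1 U=a_2 U$. The paper carries out the same computation, phrased as solving $u=a^{-p}(av)^p$ with $a=a_1^{-1}$ and $u=a_1^p a_2^{-p}$.
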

\begin{proof}
We are going to prove this statement using our characterising property of right exact localizations given in Theorem \ref{th_s-d-l}. Let us consider a local group $A$ and its equivariantly local normal subgroup $U$. Then $A$ and $A\ltimes U$ are uniquely $P$-divisible. It is easy to check by induction that  $(a,u)^n=(a^n, a^{-n}(au)^n)$ for any $(a,u) \in A\ltimes U.$ Therefore,  $A\ltimes U$ is uniquely $P$-divisible if and only if for any $p\in P$ and any $(a,u)\in A\ltimes U$ there exists a unique $(b,v)\in A\ltimes U$ such that  $(b^p, b^{-p}(bv)^p)=(a,u)$. Since $A$ is uniquely $P$-divisible, this is equivalent to the following statement: 
\begin{center}
$(*)$ for any $p\in P,$ $a\in A$ and $u\in U$ there exists a unique $v\in U$ such that $u=a^{-p}(av)^p$.
\end{center}
Thus it is sufficient to show that $A/U$ is uniquely $P$-divisible. It is easy to see that it is $P$-divisible, hence only uniqueness need to be proved. We prove that for any $a_1,a_2\in A$ such that  $a_1^p a_2^{-p} \in U $ we have $a_1a_2^{-1}\in U.$
Take $u=a_1^pa_2^{-p}$  and $a=a_1^{-1}$. Thus $(*)$ implies that there exists  $v \in U$ such that $a_1^pa_2^{-p}=a_1^p(a_1^{-1}v)^p$. 
Dividing both of the sides by $a_1^p$ and extracting the $p$-th root, we obtain: $a_2^{-1}=a_1^{-1}v$, and hence, $a_1a_2^{-1}=v \in U$. It follows that $A/U$ is uniquely $P$-divisible.
\end{proof}

\subsection{Bousfield's $HR$-localization}

We denote by $R$ a subring of $\mathbb Q$ or the ring $\ZZ/n$ for some $n.$ Then the functor of $HR$-localization is the functor of localization with respect to the class of $R$-$2$-connected homomorphisms i.e. homomorphisms $f:G\to H$ which induce an isomorphism $H_1(G,R)\cong H_1(H,R)$ and an epimorphism $H_2(G,R)\epi H_2(H,R)$ (for details see \cite{Bousfield77}). Nilpotent groups are examples of $H\ZZ$-local groups, but in general computation of $H\ZZ$-localization of the given group can be a difficult problem. Example of such computation is given in \cite[Prop. 9.2]{IvanovMikhailov} for $G=\ZZ\ltimes\ZZ.$

Following Bousfield \cite{Bousfield77}  we say that a subgroup $B$ of a group $A$ is {\it $HR$-closed}, if for any bigger subgroup  $B\subsetneq B'\subseteq A$ the map $H_1(B,R)\to H_1(B',R)$ is not an epimorphism.

Bousfield has proved the following statements. 

\begin{Proposition}[Bousfield \cite{Bousfield77}]\label{prop_bous} \ 
\begin{enumerate}
\item A subgroup of an $HR$-local group is $HR$-local if and only if it is $HR$-closed \cite[Cor. 2.11]{Bousfield77}. 

\item The image of a homomorphism between $HR$-local groups is $HR$-local \cite[Cor. 2.12]{Bousfield77}.

\item A homomorphism between $HR$-local groups $A\to B$ is an epimorphism if and only if $H_1(A,R)\to H_1(B,R)$ is an epimorphism \cite[Cor.2.13]{Bousfield77}.

\item In particular, the functor of $HR$-localization is epi-preserving. 
\end{enumerate}
\end{Proposition}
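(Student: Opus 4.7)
The plan is to cite Bousfield's original paper \cite{Bousfield77} for parts (1)--(3) as indicated in the statement and to observe that (4) is an immediate corollary of (3). The substantive content lies in part (1), Bousfield's structural characterization of $HR$-local subgroups; parts (2), (3), (4) can then be derived from (1) together with short arguments involving the five-term exact sequence.

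For (1), Bousfield's proof ultimately relies on the explicit transfinite tower construction of the $HR$-localization functor. The forward direction proceeds by contradiction: if $B \subseteq A$ is $HR$-local but not $HR$-closed, I would pick $B \subsetneq B' \subseteq A$ with $H_1(B,R) \epi H_1(B',R)$ and construct, via a short tower argument, an $R$-$2$-connected map and a homomorphism from its source to $B$ admitting two distinct extensions, contradicting $HR$-locality of $B$. For the reverse direction, given an $R$-$2$-connected map $f:X \to Y$ and $\varphi:X \to B$, one extends via $HR$-locality of $A$ to $\tilde \varphi:Y \to A$; the closedness hypothesis then forces $\Im(\tilde \varphi) \subseteq B$, since otherwise the subgroup $B \cdot \Im(\tilde \varphi)$ would be a strict enlargement of $B$ onto whose $H_1$ the group $H_1(B,R)$ surjects.

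Given (1), the remaining parts fall out quickly. For (2), I would verify that $\Im(f)$ is $HR$-closed in $B$ by taking any strict over-subgroup $\Im(f) \subsetneq C$ and applying (1) together with $HR$-locality of both $A$ and $B$ to rule out $H_1(\Im(f),R) \epi H_1(C,R)$. For (3), the nontrivial direction first invokes (2) to replace $B$ by the $HR$-local subgroup $C := \Im(f)$, observes that $H_1(C,R) \epi H_1(B,R)$ (inherited from the hypothesis on $A$), and then applies (1) to conclude $C = B$. For (4), given an epimorphism $G \epi H$ the induced map $H_1(G,R) \epi H_1(H,R)$ is surjective; since $H_1(LG,R) \cong H_1(G,R)$ and likewise for $H$, statement (3) immediately yields $LG \epi LH$.

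The principal obstacle is that a self-contained proof of (1) demands the full machinery of $HR$-acyclic maps and the transfinite pushout construction of the $HR$-localization functor from \cite{Bousfield77}; rebuilding this apparatus is precisely what the authors wish to avoid by citing Bousfield, so the pragmatic plan is to import these results as black boxes and derive (4) explicitly.
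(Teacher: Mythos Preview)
Your proposal is correct and matches the paper's approach: the paper gives no proof whatsoever for this proposition, simply stating it with the Bousfield citations and treating (4) as an evident consequence (``In particular''). Your explicit derivation of (4) from (3) via $H_1(G,R)\cong H_1(LG,R)$ is exactly the intended reasoning, and your additional sketches of (1)--(3) go beyond what the paper provides but are consistent with its intent to import these as black boxes from \cite{Bousfield77}.
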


\begin{Definition} Let $A$ be a group and let $U$ be an $A$-group. We say that 
an $A$-subgroup $V\leq U$ is {\it $HR\text{-}A$-closed}, if $A\ltimes V$ is $HR$-closed in $A\ltimes U.$ Note that $H_1(A\ltimes V,R)=H_1(A,R)\oplus H_1(V,R)_A.$ Hence, $V$ is $HR\text{-}A$-closed in $U$ if and only if for any bigger $A$-subgroup $V\subsetneq V'\subseteq U$ the map between coinvariants $H_1(V,R)_A\to H_1(V',R)_A$ is not an epimorphism. 
\end{Definition}

\begin{Lemma}\label{lemma_local_qoutient} Let $A$ be an $HR$-local group and let $U$ be a normal subgroup. Then the following statements are equivalent. 
\begin{enumerate}
\item The quotient $A/U$ is $HR$-local.
\item The quotient $A/U$ is transfinitely $R$-nilpotent, i.e. intersection of the transfinite lower $R$-central series, as in \cite[p. 48]{Bousfield77}, is trivial.
\item The group $U$ is $HR$-$A$-closed in $A$.   
\item The group $U$ is $L_A$-local, where $L$ is the functor of $HR$-localization.
\end{enumerate}
\end{Lemma}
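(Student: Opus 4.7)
The plan is to prove the four equivalences by combining Bousfield's structural results on $HR$-local subgroups (Proposition~\ref{prop_bous}) with the semidirect product embedding $A \ltimes U \hookrightarrow A \ltimes A$ and the kernel lemma from Section~\ref{section_right_exact-functors}. The most algebraic ingredient, and the one I would tackle first, is the equivalence $(3) \Leftrightarrow (4)$.

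For $(3) \Leftrightarrow (4)$, I would observe that the assignment $(a, b) \mapsto (a, ab)$ defines a group isomorphism $A \ltimes A \cong A \times A$ when $A$ acts on itself by conjugation, so $A \times A$ is $HR$-local. The embedding $A \ltimes U \hookrightarrow A \ltimes A \cong A \times A$ therefore exhibits $A \ltimes U$ as a subgroup of an $HR$-local group, and subgroups sandwiched between $A \ltimes U$ and $A \ltimes A$ correspond bijectively to $A$-subgroups $V$ of $A$ with $U \leq V$. By Proposition~\ref{prop_bous}(1), $A \ltimes U$ is $HR$-local iff it is $HR$-closed in $A \times A$, and via the splitting $H_1(A \ltimes V, R) = H_1(A, R) \oplus H_1(V, R)_A$ this unpacks into precisely the $HR$-$A$-closed condition on $U$.

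For $(1) \Leftrightarrow (4)$, the implication $(1) \Rightarrow (4)$ is immediate from Lemma~\ref{lemma_ker_local} applied to the surjection $A \twoheadrightarrow A/U$ between $HR$-local groups: its kernel $U$ is then equivariantly local. For the converse $(4) \Rightarrow (1)$, let $\bar U$ denote the kernel of the universal map $A \to L(A/U)$, so that $A/\bar U \cong L(A/U)$ is $HR$-local and $U \leq \bar U$. Applying $(1) \Rightarrow (4)$ to $\bar U$ shows that $A \ltimes \bar U$ is $HR$-local and contains the $HR$-local subgroup $A \ltimes U$. Proposition~\ref{prop_bous}(1) then exhibits $A \ltimes U$ as $HR$-closed inside the $HR$-local $A \ltimes \bar U$, and combined with Proposition~\ref{prop_bous}(3) (epimorphisms between $HR$-local groups are detected on $H_1$) this forces $\bar U = U$, whence $A/U \cong L(A/U)$ is $HR$-local.

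Finally, $(1) \Leftrightarrow (2)$ is a direct consequence of Bousfield's structural theorems: an $HR$-local group is automatically transfinitely $R$-nilpotent, and conversely a transfinitely $R$-nilpotent quotient of the $HR$-local group $A$ inherits enough Ext-$R$-completeness from $H_1(A, R)$ to itself be $HR$-local. The main obstacle in the whole argument is $(4) \Rightarrow (1)$: one must carefully avoid any appeal to right-exactness of the $HR$-localization functor, since Lemma~\ref{lemma_local_qoutient} is precisely the key input that will be used to establish that right-exactness in the following subsection.
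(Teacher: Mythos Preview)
Your decomposition differs from the paper's: where they run the cycle $(1)\Rightarrow(2)\Rightarrow(3)\Rightarrow(1)$ together with $(3)\Leftrightarrow(4)$, you try $(1)\Leftrightarrow(4)$, $(3)\Leftrightarrow(4)$, and $(1)\Leftrightarrow(2)$ directly. Your $(3)\Leftrightarrow(4)$ and $(1)\Rightarrow(4)$ are fine and match the paper's reasoning. The trouble is in the two converse directions.

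\medskip
\noindent\textbf{Gap in $(4)\Rightarrow(1)$.} You set $\bar U=\ker\bigl(A\to L(A/U)\bigr)$, correctly note that both $A\ltimes U$ and $A\ltimes\bar U$ are $HR$-local, and deduce that $A\ltimes U$ is $HR$-closed in $A\ltimes\bar U$. But then you assert that Proposition~\ref{prop_bous}(3) ``forces $\bar U=U$'' without saying how. That proposition only tells you that a homomorphism between $HR$-local groups is surjective iff it is surjective on $H_1$; applied to the inclusion $A\ltimes U\hookrightarrow A\ltimes\bar U$ this is vacuous, since a proper inclusion is of course not surjective. What is actually needed is the \emph{opposite} fact, namely that $H_1(U,R)_A\to H_1(\bar U,R)_A$ \emph{is} an epimorphism, which then contradicts $HR$-$A$-closedness unless $U=\bar U$. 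The paper obtains precisely this by comparing the two five-term Stallings exact sequences
\[
H_2(A)\to H_2(A/U)\to H_1(U)_A\to H_1(A)\to H_1(A/U)
\]
and the analogous sequence for $\bar U$, and applying the five-lemma, using that $\eta_{A/U}\colon A/U\to A/\bar U$ is $R$-$2$-connected (isomorphism on $H_1$, epimorphism on $H_2$). This homological input is essential and is not supplied by Proposition~\ref{prop_bous}(3).

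\medskip
\noindent\textbf{Vagueness in $(2)\Rightarrow(1)$.} The claim that a transfinitely $R$-nilpotent quotient of $A$ ``inherits enough Ext-$R$-completeness from $H_1(A,R)$'' is not justified: quotients of Ext-$R$-complete modules are not in general Ext-$R$-complete, so this sketch does not go through as written. The paper sidesteps this entirely by proving $(2)\Rightarrow(3)$ via an elementary argument: if $U\subsetneq V\trianglelefteq A$ with $H_1(U,R)_A\twoheadrightarrow H_1(V,R)_A$, then setting $V'=V/U$ and $A'=A/U$ one gets $V'/[V',A']\otimes R=0$, and transfinite $R$-nilpotence of $A'$ forces $V'=1$. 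One then reaches $(1)$ from $(3)$ via the five-lemma step above.
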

\begin{proof}
(1)$\Rightarrow$(2). Any $HR$-local group is transfinitely $R$-nilpotent \cite[Prop. 1.2]{Bousfield77}. 

(2)$\Rightarrow$(3). Assume that $U\subseteq  V\triangleleft A$ and $H_1(U,R)_A\to H_1(V,R)_A$ is an epimorphism. It follows that $U/[U,A]\otimes R\to V/[V,A]\otimes R$ is an epimorphism. If we set $V'=V/U$ and $A'=A/U,$ we obtain $V'/[V',A']\otimes R=0.$ Since $A/U $ is transfinitely $R$-nilpotent, this implies $V'=1,$ and hence $V=U.$

(3)$\Rightarrow$(1).
 We denote the functor of $HR$-localization by $L$ and $H_*(-):=H_*(-,R).$ 
Since $L$ sends epimorphisms to epimorphisms, we obtain that $A\to L(A/U)$ is an epimorphism.  Denote the kernel of this epimorphism by $U'$ and consider the diagram:
$$
\begin{tikzcd}
1 \arrow[r] & U \arrow[r]\arrow[d,hookrightarrow] & A \arrow[r]\arrow[d,"{\sf id}"] & A/U \arrow[r]\arrow[d,"\eta_{A/U}"] &1 \\
1\arrow[r]  & U'\arrow[r] & A \arrow[r] & L(A/U)\arrow[r] & 1.
\end{tikzcd}
$$
Since $\eta_{A/U}$ is $R$-2-connected, we obtain the isomorphism $H_1(A/U)\cong H_1(L(A/U))$ and the epimorphism $H_2(A/U)\twoheadrightarrow H_2(L(A/U)).$ Then 5-lemma applied to the diagram
$$
\begin{tikzcd}
H_2(A)\arrow[r] \arrow[d,"\cong"]& H_2(A/U)\arrow[r]\arrow[d,twoheadrightarrow] & H_1(U)_A\arrow[r]\arrow[d] & H_1(A)\arrow[r]\arrow[d,"\cong"] & H_1(A/U)\arrow[d,"\cong"]\\
H_2(A)\arrow[r] & H_2(A/U')\arrow[r] & H_1(U')_A\arrow[r] & H_1(A) \arrow[r]& H_1(L(A/U)).
\end{tikzcd}
$$
implies that $H_1(U)_A\to H_1(U')_A$ is an epimorphism. 
Using that $U$ is $HR\text{-}A$-closed, we obtain that $U=U',$ and hence $A/U=L(A/U)$ is $HR$-local. 

(3)$\Leftrightarrow$ (4).  Proposition \ref{prop_bous} states that a subgroup of an $L$-local group is $L$-local if and only if it is $HR$-closed. Therefore an $A$-subgroup of an $L_A$-local $A$-group is $L_A$-local if and only if it is $HR$-$A$-closed. 
\end{proof}

\begin{Theorem}\label{Bousfield-right-exact}
The functor of $HR$-localization is right exact. 
\end{Theorem}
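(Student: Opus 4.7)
The plan is to invoke Theorem~\ref{th_s-d-l} and reduce the statement to the equivalence already established in Lemma~\ref{lemma_local_qoutient}. By Theorem~\ref{th_s-d-l}, to show that the $HR$-localization $L$ is right exact it suffices to verify condition~(2): for every $HR$-local group $A$ and every equivariantly local normal subgroup $U\triangleleft A$, the quotient $A/U$ is $HR$-local.

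So I would fix such $A$ and $U$. By definition, $U$ being equivariantly local means precisely that $U$ is $L_A$-local as an $A$-group. But Lemma~\ref{lemma_local_qoutient} establishes the equivalence of the two conditions
\emph{``$U$ is $L_A$-local''} and \emph{``$A/U$ is $HR$-local''}
(this is the implication (4)$\Rightarrow$(1) of that lemma). Therefore $A/U$ is $HR$-local, which is exactly what condition~(2) of Theorem~\ref{th_s-d-l} requires.

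There is essentially no obstacle at this stage: all the real work has already been absorbed into Lemma~\ref{lemma_local_qoutient}, whose proof relied on Bousfield's structural results (Proposition~\ref{prop_bous}), on the characterisation of $HR$-local subgroups as $HR$-closed subgroups, and on a five-lemma argument with the Stallings-type $H_1$/$H_2$ comparison for $R$-$2$-connected maps. Once that lemma is in hand, the verification of right exactness is a one-line application of Theorem~\ref{th_s-d-l}, so the proof is just a matter of assembling the two statements and pointing out that equivariant locality is the same as $L_A$-locality.
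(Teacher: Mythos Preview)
Your proposal is correct and matches the paper's own proof essentially line for line: the paper also invokes Theorem~\ref{th_s-d-l} to reduce to condition~(2), and then appeals to the implication $(4)\Rightarrow(1)$ of Lemma~\ref{lemma_local_qoutient} to conclude. Your identification of equivariant locality with $L_A$-locality and your remark that the substantive work has already been done in the lemma are exactly right.
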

\begin{proof}

By Theorem \ref{th_s-d-l} it is sufficient to show that for any $HR$-local group $A$ and its equivariantly local normal subgroup $U$ the quotient $A/U$ is $HR$-local. But this follows Lemma \ref{lemma_local_qoutient} part $(4)\Rightarrow (1).$

\end{proof}

\subsection{ Levine's localization and  Levine-Cha's $\ZZ$-localization}

In this subsection we will discuss Levine's localization \cite{Levine} which is called the  ``algebraic closure'' in that paper (see also \cite{LevineI}, \cite{LevineII}) and show that this localization is right exact (Theorem \ref{Levine-right-exact}). We will also use some facts about the so-called ``$R$-algebraic closure'' of groups introduced by Cha in \cite{Cha}, which we call Levine-Cha $R$-localization. 

For a subgroup $H$ of a group $G$ we denote by $H^G$ the normal subgroup of $G$ generated by $H.$

Let $G$ be a group, $X$ be a set and $F(X)$ be a free group generated by $X$. Elements of $G*F(X)$ are called {\it monomials} (with coefficients in $G$).
A monomial is called {\it contractible} if it lies in the kernel of the projection $G*F(X)\to F(X).$ The set of contractible monomials forms the normal closure of $G$ in $G*F(X).$  

A monomial is called {\it acyclic} if it is in the kernel of the projection to the abelianization $G*F(X)\to F(X)_{ab}.$ The set of acyclic monomials forms the group $G^{G*F(X)}[G*F(X),G*F(X)].$

\begin{Lemma} Let $G$ be a group, $H$ be a subgroup and $\varphi:X\to G$ be a map  such that $G=\langle H\cup \varphi (X)\rangle$. Then the following statements about an element $g\in G$ are equivalent.
\begin{enumerate}
\item The element $g$ lies in $H^G.$

\item There exists a contractible monomial $w\in H*F(X),$ whose image in $G$ is $g.$
\end{enumerate}
Moreover, the following statements about an element $g\in G$ are also equivalent.

\begin{enumerate}
\item The element $g$ lies in $H^G[G,G].$

\item There exists an acyclic monomial $w\in H*F(X),$ whose image in $G$ is $g.$
\end{enumerate}
\end{Lemma}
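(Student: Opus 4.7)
The plan is to prove both equivalences uniformly by exploiting the natural surjection $\pi\colon H*F(X)\epi G$ induced by the inclusion $H\hookrightarrow G$ and by $\varphi\colon X\to G$; surjectivity comes from the hypothesis $G=\langle H\cup\varphi(X)\rangle$. I would first record the two elementary facts I need about contractible and acyclic monomials inside $H*F(X)$ itself: the contractible monomials in $H*F(X)$ coincide with the normal closure $H^{H*F(X)}$ (this is just the kernel of the split projection $H*F(X)\to F(X)$), and the acyclic monomials coincide with $H^{H*F(X)}\cdot [H*F(X),H*F(X)]$ (any element killed by $H*F(X)\to F(X)_{ab}$ can, after multiplying by a suitable lift of a product of commutators in $F(X)$, be pushed into $H^{H*F(X)}$).

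For part 1, the direction (2)$\Rightarrow$(1) is immediate: $\pi$ sends $H^{H*F(X)}$ into $H^G$ because the restriction of $\pi$ to $H$ is the inclusion into $G$ and $\pi$ is a homomorphism. For (1)$\Rightarrow$(2), write $g=\prod_i h_i^{g_i}$ with $h_i\in H$ and $g_i\in G$; by surjectivity of $\pi$, pick $w_i\in H*F(X)$ with $\pi(w_i)=g_i$, and set $w:=\prod_i h_i^{w_i}$. Then $\pi(w)=g$, and $w\in H^{H*F(X)}$, i.e.\ $w$ is contractible.

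Part 2 is proved analogously, combining part 1 with a commutator argument. The direction (2)$\Rightarrow$(1) follows because $\pi$ sends $H^{H*F(X)}\cdot [H*F(X),H*F(X)]$ into $H^G\cdot [G,G]$. For (1)$\Rightarrow$(2), write $g=g_1g_2$ with $g_1\in H^G$ and $g_2\in [G,G]$. By part 1, choose a contractible monomial $w_1\in H*F(X)$ with $\pi(w_1)=g_1$. Write $g_2=\prod_j [a_j,b_j]$ and lift each $a_j,b_j$ to $\tilde a_j,\tilde b_j\in H*F(X)$ via $\pi$; then $w_2:=\prod_j[\tilde a_j,\tilde b_j]$ lies in $[H*F(X),H*F(X)]$, hence is acyclic, and $\pi(w_2)=g_2$. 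The product $w:=w_1w_2$ is then an acyclic monomial with $\pi(w)=g$.

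There is really no serious obstacle here; the only point that warrants a line of justification is the description of acyclic monomials as $H^{H*F(X)}\cdot[H*F(X),H*F(X)]$, for which one uses that the kernel of $F(X)\to F(X)_{ab}$ is $[F(X),F(X)]$ and that any commutator in $F(X)$ lifts to a commutator in $H*F(X)$ along the split projection $H*F(X)\to F(X)$.
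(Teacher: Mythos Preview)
Your argument is correct and is exactly the natural unpacking of why the paper labels this lemma ``Obvious'': the paper records no proof beyond that word, and what you have written is precisely the routine verification one has in mind, using the surjection $\pi\colon H*F(X)\twoheadrightarrow G$ together with the identifications of contractible and acyclic monomials with $H^{H*F(X)}$ and $H^{H*F(X)}[H*F(X),H*F(X)]$ already stated in the text.
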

\begin{proof}
Obvious.
\end{proof}

Let $G$ be a group and $X=\{x_1,\dots,x_n\}$ be a finite set. {\it Levine's system} of equations is a set of the form $E=\{x_1w_1^{-1},\dots,x_nw_n^{-1}\}\subseteq G*F(X),$ where $w_i$ are contractible monomials. One thinks about this set as a set of equations $\{x_i=w_i\}.$ Following Cha \cite{Cha}, we define a {\it $\ZZ$-nullhomologous system of equations} as a set of the form $E=\{x_1w_1^{-1},\dots,x_nw_n^{-1}\}\subseteq G*F(X),$ where $w_i$ are acyclic monomials. 
 A {\it solution} of a Levine's (or $\ZZ$-nullhomologous) system of equations is a map $ \varphi :X\to G$ such that $E$ lies in the kernel of corresponding homomorphism $G*F(X)\to G.$ 

%

A group $G$ is called {\it Levine local} (resp. {\it Levine-Cha $\ZZ$-local}) if any Levine's (resp. $\ZZ$-nullhomologous) system of equations has a unique solution. Levine's localization (resp. Levine-Cha's $\ZZ$-localization) of a group $G$ is the universal map from $G$ to a Levine local (resp. Levine-Cha $\ZZ$-local) group. Both localizations exist for any group and give functors $L_{\sf Lev},\ L_{\sf Lev\text{-}Cha}:{\sf Gr}\to {\sf Gr}$ (see \cite{Cha}). Levine localization is called the  ``algebraic closure'' and Levine local groups are called ``$E$-local'' in \cite{Levine}. It is known that Levine's localization is the localization with respect to the class of $2$-connected normally surjective homomorphisms $f:G\to H$ such that $G$ is finitely generated and $H$ is finitely presented  \cite{Levine}. Similarly, the Levine-Cha $\ZZ$-localization is the localization with respect to the class of $2$-connected  homomorphisms $f:G\to H$ such that $G$ is finitely generated and $H$ is finitely presented \cite[Th. 5.2.]{Cha}. As with $H\ZZ$-localization, nilpotent groups are both Levine local and Levine-Cha $\ZZ$-local.

A normal subgroup $H$ of a group $G$ is called {\it invisible} if it is the normal closure of a finite number of elements $H=\langle h_1,\dots, h_n \rangle^G$ and $H=[H,G].$ A Levine local group does not contain  nontrivial invisible subgroups \cite{Levine}. 

\begin{Proposition}[{c.f. Lemma 4.9. \cite{Cha}}] \label{prop_invisible} The following statements about a group $G$ are equivalent. 

\begin{enumerate}
\item The group $G$ has no nontrivial invisible subgroups. 

\item A $\ZZ$-nullhomologous system over $G$ has at most one solution. 

\item A Levine's system over $G$ has at most one solution. 
\end{enumerate}
\end{Proposition}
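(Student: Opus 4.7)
The plan is to prove the cycle $(2)\Rightarrow(3)\Rightarrow(1)\Rightarrow(2)$. The implication $(2)\Rightarrow(3)$ is free: the projection $G*F(X)\to F(X)$ factors through $F(X)_{ab}$, so every contractible monomial is acyclic, every Levine system is a $\ZZ$-nullhomologous system, and uniqueness for the larger class entails uniqueness for the smaller one.

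For $(3)\Rightarrow(1)$ I argue by contrapositive. Given a nontrivial invisible subgroup $H=\langle h_1,\dots,h_n\rangle^G$, the equality $H=[H,G]$ lets me write each $h_i=\prod_k[u_{ik},g_{ik}]$ with $u_{ik}\in H$ and $g_{ik}\in G$, and then each $u_{ik}$ as a product of conjugates of the $h_j$'s by elements of $G$. Replacing the $h_j$'s by fresh variables $x_j$ throughout these expressions yields words $\tilde u_{ik}\in G*F(X)$ and $w_i=\prod_k[\tilde u_{ik},g_{ik}]$. Projecting to $F(X)$ kills every $g_{ik}$, hence kills every commutator $[\tilde u_{ik},g_{ik}]$, so each $w_i$ is contractible and $\{x_i=w_i\}$ is a Levine system. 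By construction both $x_i\mapsto 1$ and $x_i\mapsto h_i$ solve it, and these solutions differ because $H\neq 1$.

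The only substantive implication is $(1)\Rightarrow(2)$. Assume two solutions $\varphi,\psi$ of a $\ZZ$-nullhomologous system $\{x_i=w_i\}$ and set $h_i=\varphi(x_i)\psi(x_i)^{-1}$, $H=\langle h_1,\dots,h_n\rangle^G$. It is enough to show that $H$ is invisible: then (1) forces $H=1$ and hence $\varphi=\psi$. Pass to $\bar G=G/[H,G]$; by construction the image $\bar H$ lies in the center of $\bar G$. Composing $\varphi,\psi$ with the quotient map gives homomorphisms $\bar\varphi,\bar\psi:G*F(X)\to\bar G$ that agree on $G$ and satisfy $\bar\varphi(x_j)=\bar h_j\bar\psi(x_j)$ with $\bar h_j$ central. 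A short induction on word length, using centrality to collect the $\bar h_j$'s to the front, produces for every $w\in G*F(X)$ the identity
\begin{equation*}
\bar\varphi(w)=\Bigl(\prod_{j=1}^{n}\bar h_j^{\,n_j(w)}\Bigr)\bar\psi(w),
\end{equation*}
where $n_j(w)$ is the $x_j$-coordinate of the image of $w$ in $F(X)_{ab}$. Acyclicity of $w_i$ says $n_j(w_i)=0$ for all $j$, hence $\bar\varphi(w_i)=\bar\psi(w_i)$. Combined with the relations $\varphi(x_i)=\varphi(w_i)$ and $\psi(x_i)=\psi(w_i)$ this forces $\bar h_i=1$, i.e.\ $h_i\in[H,G]$. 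Since $[H,G]$ is normal in $G$, the normal closure $H$ is contained in $[H,G]$, so $H=[H,G]$ as required.

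The main obstacle is to find, in $(1)\Rightarrow(2)$, a quotient in which $\bar H$ becomes central so that ``collection of central factors'' can convert acyclicity of $w_i$ (the exponent-sum vector of $w_i$ in $F(X)_{ab}$ being zero) into the equality $\bar\varphi(w_i)=\bar\psi(w_i)$. The choice $\bar G=G/[H,G]$ is universal for this property and makes the translation immediate; the remaining implications are either tautological or a direct unpacking of the definition of an invisible subgroup.
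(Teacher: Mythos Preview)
Your proof is correct and follows essentially the same route as the paper: the same cycle $(2)\Rightarrow(3)\Rightarrow(1)\Rightarrow(2)$, the same construction of a Levine system from an invisible subgroup in $(3)\Rightarrow(1)$, and the same key idea in $(1)\Rightarrow(2)$ of setting $h_i=\varphi(x_i)\psi(x_i)^{-1}$, $H=\langle h_1,\dots,h_n\rangle^G$, and working modulo $[H,G]$ to show $H$ is invisible. Your ``collection of central factors'' identity $\bar\varphi(w)=\bigl(\prod_j\bar h_j^{n_j(w)}\bigr)\bar\psi(w)$ is a clean repackaging of the paper's explicit telescoping computation $h_i\equiv h_{k_1}^{\epsilon_1}\cdots h_{k_m}^{\epsilon_m}$, but the content is the same.
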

\begin{proof}

(1)$\Rightarrow$(2). Assume that $\{x_1w_1^{-1},\dots,x_nw_n^{-1}\}$ 
is a $\ZZ$-nullhomologous system of equations and $\varphi, \psi:X\to G$ are two of its solutions. Set $\varphi_i=\varphi(x_i), \psi_i=\psi(x_i)$ and $h_i= \varphi_i\psi^{-1}_i.$ 
We prove that $H:=\langle h_1,\dots, h_n \rangle^G$
 is an invisible subgroup. In the following computation the symbol $\equiv$
  means equation modulo $[H,G].$ Note that $h_ig\equiv gh_i$ for any $g\in G.$  
We need to  
prove that $h_i \equiv 1.$ Fix $i$ and let 
$$w_i=g_1 x_{k_1}^{\epsilon_1} g_2 \dots g_m x_{k_m}^{\epsilon_m} g_{m+1},$$
where $g_j\in G$ and $\epsilon_i\in \{-1,1\}.$ Then 
$$\varphi_i=g_1 \varphi_{k_1}^{\epsilon_1} g_2 \dots g_m \varphi_{k_m}^{\epsilon_m} g_{m+1}$$
and 
$$\psi_i=g_1 \psi_{k_1}^{\epsilon_1} g_2 \dots g_m \psi_{k_m}^{\epsilon_m} g_{m+1}.$$ 
 Set $\varphi_i^{(l)} =g_1 \varphi_{k_1}^{\epsilon_1} g_2 \dots g_{l-1} \varphi_{k_{l-1}}^{\epsilon_l} g_{l}$ and $\psi_i^{(l)}=g_1 \psi_{k_1}^{\epsilon_1} g_2 \dots  g_{l-1} \psi_{k_{l-1}}^{\epsilon_{l-1}} g_{l}.$
 Hence
$$h_i=\varphi_i \psi_i^{-1}=\varphi_i^{(m)} \varphi^{\epsilon_m}_{k_m} g_{m+1} g^{-1}_{m+1} \psi_{k_m}^{-\epsilon_m} (\psi_i^{(m)})^{-1} \equiv \varphi_i^{(m)} (\psi_{i}^{(m)})^{-1} h_{k_m}^{\epsilon_m} \equiv \dots $$
$$ \dots \equiv \varphi_i^{(m-1)} (\psi_{i}^{(m-1)})^{-1} h_{k_{m-1}}^{\epsilon_{m-1}} h_{k_m}^{\epsilon_m} \equiv \dots \equiv h_{k_1}^{\epsilon_1} \dots h_{k_m}^{\epsilon_m}. $$
Since  $x_{k_1}^{\epsilon_1}\dots x_{k_m}^{\epsilon_m}\in [F(X),F(X)],$  we obtain   $h_{k_1}^{\epsilon_1} \dots h_{k_m}^{\epsilon_m}\equiv 1.$ Therefore $h_i\equiv 1.$

(2)$\Rightarrow$(3). Obvious. 

(3)$\Rightarrow$(1). Let $H=\langle h_1,\dots,h_n \rangle^G$ be an invisible subgroup. Then $H=[H,G],$ and hence, $h_i$ can be presented as a product of commutators $h_i=[a_{i,1} ,g_{i,1}]^{\epsilon_{i,1}} { \dots} [a_{i,N},g_{i,N}]^{\epsilon_{i,N}},$ where $a_{i,j}\in H$ and $\epsilon_{i,j}\in \{-1,1\}.$ Set $X=\{x_1,\dots,x_n\}.$ Since $a_{i,j}\in \langle h_1,\dots,h_n \rangle^G$ there exist  monomials $u_{i,j}\in G*F(x_1,\dots,x_n)$ which lie in the normal closure $\langle x_1,\dots,x_n \rangle^{G*F(X)}$ such that $a_{i,j}$ is the image of $u_{i,j}$ with respect to  the map $\{x_1,\dots,x_n\}\to G$ sending $x_i$ to $h_i.$ Consider the monomials $w_i=[u_{i,1} ,g_{i,1}]^{\epsilon_{i,1}} { \dots} [u_{i,N},g_{i,N}]^{\epsilon_{i,N}}$ in $G*F(x_1,\dots,x_n).$ Note that $w_i\in \langle x_1,\dots,x_n \rangle^{G*F(X)}$ because $u_{i,j}$ is in this subgroup.  The map $X\to G$ sending $x_i$ to $h_i$ is a solution of the Levine's system of equations $\{ x_1w_1^{-1},\dots,x_nw_n^{-1} \}$. The map $X\to G$ sending $x_i$ to $1$ is also a solution of this system. Hence $h_i=1$ for any $i$ and $H=1.$
\end{proof}

\begin{Proposition} \label{prop_local_subgroups_Levine} Let $A$ be a Levine local group  and $B$ be its subgroup. Then the following statements are equivalent.
\begin{enumerate}
\item The subgroup $B$ is Levine local.

\item If 
$C=\langle B \cup \{c_1,\dots,c_n\} \rangle$ for some  $c_1,\dots,c_n\in A$  such that 
 $C=B^C,$ then $B=C.$
\end{enumerate}
\end{Proposition}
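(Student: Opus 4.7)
The plan is to use the key fact that a Levine system over $B$ is automatically a Levine system over the ambient Levine local group $A$: it only has more coefficients available, not fewer. So any Levine system over $B$ has a unique solution in $A$ by hypothesis on $A$, and the question reduces to showing that this $A$-solution actually lies in $B$. The bridge between the algebraic condition ``$C = B^C$'' and Levine systems is provided by the lemma preceding the definition of a Levine system: an element lies in the normal closure $B^C$ precisely when it is the image of a contractible monomial in $B * F(X)$. This is what will let us pass freely back and forth between the two formulations.

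For (1)$\Rightarrow$(2), suppose $C=\langle B\cup\{c_1,\dots,c_n\}\rangle$ with $C=B^C$. Applying the lemma with $H=B$ and $\varphi(x_i)=c_i$, each $c_i$ is the image of a contractible monomial $w_i\in B*F(X)$ under $x_j\mapsto c_j$. Then $\{x_iw_i^{-1}\}$ is a Levine system over $B$, and the assignment $x_i\mapsto c_i$ is a solution in $A$. Since $B$ is Levine local, the system has a (necessarily unique) solution $\psi\colon X\to B$; viewing $\psi$ inside $A$ gives a second solution of the same system. Since $A$ is Levine local, both solutions agree, forcing $c_i=\psi(x_i)\in B$, hence $C=B$.

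For (2)$\Rightarrow$(1), I need to show that every Levine system $\{x_iw_i^{-1}\}$ over $B$ has a unique solution in $B$. Uniqueness inside $B$ follows immediately from uniqueness in $A$, via Proposition \ref{prop_invisible} and the inclusion $B\hookrightarrow A$. For existence, take the unique solution $\varphi\colon X\to A$ guaranteed by Levine locality of $A$, and put $c_i=\varphi(x_i)$. Let $C=\langle B\cup\{c_1,\dots,c_n\}\rangle\subseteq A$. Because $w_i$ is contractible, it lies in the normal closure of $B$ in $B*F(X)$, so its image $c_i$ lies in $B^C$; therefore $C=B^C$. Hypothesis (2) then forces $C=B$, so each $c_i\in B$, proving that $\varphi$ is a solution in $B$.

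The step I expect to require the most care is verifying that the $C$ constructed from a candidate solution $\varphi$ in (2)$\Rightarrow$(1) really satisfies $C=B^C$: this comes down to tracking the ``contractible'' condition through the substitution map $B*F(X)\to C$, and recognising that the conclusion is literally the content of the auxiliary lemma about normal closures of monomials. The rest of the argument is then formal manipulation using that a Levine system over $B$ remains a Levine system over any overgroup.
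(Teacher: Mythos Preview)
Your proof is correct and follows essentially the same route as the paper's: in both directions you translate between the condition $C=B^C$ and the existence of contractible monomials via the auxiliary lemma, then use uniqueness of solutions in the ambient Levine local group $A$ to force the $c_i$ into $B$. Your write-up is more explicit than the paper's (and the appeal to Proposition~\ref{prop_invisible} for uniqueness is unnecessary, since two solutions in $B$ are already two solutions in $A$), but the argument is the same.
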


\begin{proof}
(1)$\Rightarrow$(2).  Since $C$ is the normal closure of $B$ in $C,$ $c_i$ can be presented as $w_i(c_1,\dots,c_n),$ where $w_i$
 is a contractible monomial of $B*F(x_1,\dots,x_n).$  Then $c_i$ is the solution of the Levine's system of equations $\{x_1w_1^{-1},\dots,x_nw_n^{-1}\}$ over $A,$ which is unique. Since $B$ is local, $c_i\in B.$

(2)$\Rightarrow$(1). Assume that $\{x_1w_1^{-1},\dots,x_nw_n^{-1}\}$ is a Levine's system of equations and $c_1,\dots,c_n$ is its solution in $A.$ Thus $C=\langle B\cup \{c_1,\dots, c_n\} \rangle$ satisfies the assumption $C=B^C$, and hence $C=B.$ It follows that the elements of the solution $c_1,\dots,c_n$ lie in $B.$ Then  $B$ is local. 
\end{proof}

\begin{Lemma} Let $U\subseteq V$ be two normal subgroups of a group $G.$ Then
\begin{equation}\label{lem-two-subgroups}
 (G\ltimes U)^{G\ltimes V}= G\ltimes U[V,G]. 
 \end{equation}
 
\end{Lemma}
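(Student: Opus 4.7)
The plan is to prove the equality by two inclusions, both reducing to short calculations in the semidirect product. Throughout, I recall that the multiplication in $G\ltimes V$ is $(g_1,v_1)(g_2,v_2)=(g_1g_2,v_1^{g_2}v_2)$ where $v^g=g^{-1}vg$ (conjugation in $G$, since $V\triangleleft G$), and that $U[V,G]$ is a subgroup of $V$, in fact normal in $G$, since both $U$ and $[V,G]$ are normal in $G$.

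For the inclusion $(G\ltimes U)^{G\ltimes V}\subseteq G\ltimes U[V,G]$, I would show directly that $G\ltimes U[V,G]$ is a normal subgroup of $G\ltimes V$ containing $G\ltimes U$. It clearly contains $G\ltimes U$; for normality it suffices to check stability under conjugation by the generating elements $(g_1,1)$ and $(1,v)$. Conjugation by $(g_1,1)$ sends $(g,w)$ to $(g_1gg_1^{-1},g_1wg_1^{-1})$, which stays inside $G\ltimes U[V,G]$ because $G$ normalizes both $U$ and $[V,G]$. Conjugation by $(1,v)$ on an element $(g,u)\in G\ltimes U$ unwinds to $(g,\,g^{-1}vgv^{-1}\cdot vuv^{-1})$; the first factor in the $V$-coordinate lies in $[V,G]$ and the second in $U$ (by normality of $U$ in $G$), so the result sits in $G\ltimes U[V,G]$. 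Since a general element of $G\ltimes U[V,G]$ is a product of such $(g,u)$ and elements $(1,c)$ with $c\in[V,G]$, and the latter are conjugated similarly, normality follows.

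For the reverse inclusion $G\ltimes U[V,G]\subseteq(G\ltimes U)^{G\ltimes V}$, the key computation is that for $v\in V$ and $g\in G$ the commutator in $G\ltimes V$ satisfies
\[
[(1,v),(g,1)]=(1,v)^{-1}(g,1)^{-1}(1,v)(g,1)=(1,v^{-1}g^{-1}vg)=(1,[v,g]).
\]
Since $(g,1)\in G\ltimes U$ and $(1,v)\in G\ltimes V$, this commutator lies in the normal closure $(G\ltimes U)^{G\ltimes V}$. As $[V,G]$ is generated by the elements $[v,g]$, taking products shows that $\{1\}\times[V,G]$ is contained in the normal closure. Combined with $G\ltimes U\subseteq(G\ltimes U)^{G\ltimes V}$ and the factorisation $(g,uc)=(g,u)(1,c)$ available for every $u\in U$, $c\in[V,G]$, this yields the desired inclusion.

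The argument has no real obstacle beyond bookkeeping with the semidirect product multiplication and the action convention; the main thing to be careful about is the explicit expansion $(1,v)(g,u)(1,v)^{-1}=(g,g^{-1}vgu v^{-1})$, which simultaneously isolates the commutator contribution $g^{-1}vgv^{-1}\in[V,G]$ and the $U$-part $vuv^{-1}\in U$.
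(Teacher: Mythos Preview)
Your proof is correct and follows essentially the same two-inclusion strategy as the paper. The only minor difference is that for the inclusion $(G\ltimes U)^{G\ltimes V}\subseteq G\ltimes U[V,G]$ the paper observes that $[G\ltimes V,G\ltimes V]=[G,G]\ltimes[V,G]\subseteq G\ltimes U[V,G]$ and hence $G\ltimes U[V,G]$ is automatically normal (as any subgroup containing the commutator subgroup), whereas you verify normality by an explicit conjugation check; the reverse inclusion via the commutator $[(1,v),(g,1)]=(1,[v,g])$ is identical in spirit to the paper's chain $1\ltimes[V,G]\subseteq[G\ltimes V,G\ltimes U]\subseteq(G\ltimes U)^{G\ltimes V}$.
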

\begin{proof}
Since commutator subgroup of $G\ltimes V$ is equal to $[G,G]\ltimes [V,G],$ it contains in the right hand side of \eqref{lem-two-subgroups}, hence $G\ltimes U[V,G]$ is normal and contains the normal closure of $G\ltimes U.$ For the inclusion in the opposite direction it is sufficient to show that $G\ltimes [V,G]$ is contained in the left hand side of \eqref{lem-two-subgroups}, this follows from the chain of inclusions
$$
1\ltimes [V,G]\subseteq [G\ltimes V,G\ltimes U]\subseteq  (G\ltimes U)^{G\ltimes V}.
$$
\end{proof}

\begin{Proposition}\label{prop_local_normal_subgroups_Levine} Let $A$ be a Levine local group and $U$ be a normal subgroup. Then the following statements are equivalent.
\begin{enumerate}
\item The quotient $A/U$ is Levine local.

\item The quotient $A/U$ has no nontrivial invisible subgroups. 

\item The subgroup $U$ is equivariantly Levine local.

\item If  $V= \langle U \cup \{v_1,\dots , v_n\} \rangle^A$ for some $v_1,\dots,v_n\in A$  such that $V=U[V,A]$, then $V=U.$   
\end{enumerate}
\end{Proposition}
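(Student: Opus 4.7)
The plan is to establish the equivalence via the route $(2) \Leftrightarrow (4)$, $(1) \Leftrightarrow (2)$, $(1) \Rightarrow (3)$, and $(3) \Rightarrow (4)$. The first three implications are essentially formal consequences of material already established; the main content and likely obstacle will be $(3) \Rightarrow (4)$, which is handled by applying Proposition \ref{prop_local_subgroups_Levine} to a cleverly chosen ambient.

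For $(2) \Leftrightarrow (4)$, I would use the bijection between normal subgroups $V \triangleleft A$ containing $U$ and normal subgroups $\bar V = V/U \triangleleft A/U$: the hypothesis $V = \langle U \cup \{v_1, \dots, v_n\}\rangle^A$ translates to "$\bar V$ is the normal closure of $\bar v_1, \dots, \bar v_n$ in $A/U$", while $V = U[V, A]$ translates to $\bar V = [\bar V, A/U]$ (since $[V, A]U/U = [V/U, A/U]$). So $(4)$ says precisely that $A/U$ has no nontrivial invisible subgroup, which is $(2)$. For $(1) \Leftrightarrow (2)$, I would use Proposition \ref{prop_invisible}: $(2)$ is equivalent to uniqueness of solutions for Levine systems over $A/U$, and existence follows by lifting such a system to $A$ (lift each $A/U$-coefficient to $A$; contractibility is preserved since $A \ast F(X) \to F(X)$ factors through $(A/U) \ast F(X) \to F(X)$), solving in the Levine local group $A$, and projecting the solution back to $A/U$. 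For $(1) \Rightarrow (3)$, I would invoke the Corollary to Lemma \ref{lemma_ker_local}, which states that a normal subgroup of a local group with local quotient is automatically equivariantly local.

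The substantive step is $(3) \Rightarrow (4)$. The key observation is that $A \ltimes A$ (with $A$ acting on itself by conjugation) is Levine local, because the assignment $(a, b) \mapsto (a, ab)$ is an isomorphism $A \ltimes A \cong A \times A$, and products of Levine local groups are Levine local (as limits). The inclusion $A \ltimes U \hookrightarrow A \ltimes A$ is a subgroup embedding since $U \subseteq A$. Under hypothesis $(3)$, the subgroup $A \ltimes U$ is Levine local, so I can invoke Proposition \ref{prop_local_subgroups_Levine}, whose condition $(2)$ then applies. Suppose, for contradiction, that $V \triangleleft A$ is as in $(4)$ with $V \neq U$. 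Setting $c_i = (1, v_i) \in A \ltimes A$ and $C = \langle (A \ltimes U) \cup \{c_1, \dots, c_n\}\rangle$, a direct computation shows $C = A \ltimes V$: combining the $c_i$ with $(1, U) \subseteq A \ltimes U$ yields $(1, V_0)$ for $V_0 = \langle U \cup \{v_1, \dots, v_n\}\rangle$, and $(A, 1)$-conjugation sends $(1, v_i)$ to $(1, v_i^a)$, enlarging $V_0$ to its $A$-normal closure $V$. Equation \eqref{lem-two-subgroups} together with $V = U[V, A]$ gives $(A \ltimes U)^C = (A \ltimes U)^{A \ltimes V} = A \ltimes U[V, A] = A \ltimes V = C$. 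Proposition \ref{prop_local_subgroups_Levine} then forces $A \ltimes U = C = A \ltimes V$, so $U = V$, contradicting $V \neq U$.

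The main obstacle is identifying a Levine local ambient for $A \ltimes U$; once the isomorphism $A \ltimes A \cong A \times A$ is recognized, the verification that $C = A \ltimes V$ satisfies the hypotheses of Proposition \ref{prop_local_subgroups_Levine} is a mechanical consequence of \eqref{lem-two-subgroups} and the defining property $V = U[V, A]$.
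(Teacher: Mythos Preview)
Your proposal is correct and follows essentially the same approach as the paper: the equivalences $(1)\Leftrightarrow(2)\Leftrightarrow(4)$ are handled identically, and your key step $(3)\Rightarrow(4)$ via embedding $A\ltimes U$ into the Levine local group $A\ltimes A\cong A\times A$ and invoking Proposition~\ref{prop_local_subgroups_Levine} together with \eqref{lem-two-subgroups} is exactly the paper's argument for $(3)\Leftrightarrow(4)$, just written out in more detail. The only organizational difference is that the paper extracts both directions of $(3)\Leftrightarrow(4)$ from that same translation, whereas you close the cycle with $(1)\Rightarrow(3)$ via the general Corollary to Lemma~\ref{lemma_ker_local}; this is a harmless variation.
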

\begin{proof}
(3)$\Leftrightarrow$(4).
Any subgroup of $A\ltimes A$ containing $A\ltimes U$ is a group of the form $A\ltimes V,$ where $V$ is normal. The semidirect product $A\ltimes V$ is the normal closure of $A\ltimes U$ in  $A\ltimes V$ if and only if $V=U[V,A].$  Then it follows from  Proposition  \ref{prop_local_subgroups_Levine}.

(2) $\Leftrightarrow$ (4). The equality $V=U[V,A]$  is equivalent to the equality $V'=[V',A'],$ where $V'=V/U$ and $A'=A/U.$ Then a normal subgroup $V\supseteq U$ satisfies the assumption of (4) if and only if $V/U$ is invisible. 

(1) $\Leftrightarrow$ (2). It is easy to see that any Levine's system has a (non necessarily unique)  solution in $A/U$ because $A$ is Levine local. Then this follows from Proposition \ref{prop_invisible}. 
\end{proof}

\begin{Theorem}\label{Levine-right-exact} Levine's localization is right exact. 
\end{Theorem}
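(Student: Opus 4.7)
The plan is to reduce the statement immediately to results already established in this section. By Theorem \ref{th_s-d-l}, to show that $L_{\sf Lev}$ is right exact it suffices to verify criterion (2): for every Levine local group $A$ and every equivariantly Levine local normal subgroup $U \triangleleft A$, the quotient $A/U$ is Levine local.

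But this is precisely the equivalence $(1) \Leftrightarrow (3)$ of Proposition \ref{prop_local_normal_subgroups_Levine}, so nothing further needs to be done. In other words, all of the actual work has already been carried out in Proposition \ref{prop_local_subgroups_Levine} (characterizing Levine local subgroups of a Levine local group), the identity $(G \ltimes U)^{G \ltimes V} = G \ltimes U[V,G]$ of the preceding lemma, and Proposition \ref{prop_invisible} (linking uniqueness of solutions of Levine's systems to the absence of nontrivial invisible subgroups). Together these three ingredients translate equivariant locality of $U$ into the statement that the quotient $A/U$ has no nontrivial invisible subgroups, which in turn is equivalent to Levine locality of $A/U$.

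So the body of the proof is essentially a single line: apply Theorem \ref{th_s-d-l} and cite Proposition \ref{prop_local_normal_subgroups_Levine}. There is no real obstacle at this stage; the entire difficulty was packaged into setting up the characterization of equivariantly local normal subgroups in terms of invisible subgroups of the quotient, which was done beforehand precisely in order to make this final theorem an immediate corollary.
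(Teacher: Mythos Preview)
Your proposal is correct and matches the paper's proof exactly: the paper's argument is the single sentence ``This follows from (1)$\Leftrightarrow$(3) of Proposition \ref{prop_local_normal_subgroups_Levine} and Theorem \ref{th_s-d-l}.'' Your additional explanation of how the earlier ingredients feed into Proposition \ref{prop_local_normal_subgroups_Levine} is accurate commentary but not needed for the proof itself.
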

\begin{proof}
This follows from (1)$\Leftrightarrow$ (3) of  Proposition  \ref{prop_local_normal_subgroups_Levine} and Theorem \ref{th_s-d-l}. 
\end{proof}

Similarly one can prove the following results.

\begin{Proposition}  Let $A$ be a Levine-Cha $\ZZ$-local group and $B$ be its subgroup. Then the following statements are equivalent.
\begin{enumerate}
\item The subgroup $B$ is Levine-Cha $\ZZ$-local.

\item If 
$C=\langle B \cup \{c_1,\dots,c_n\} \rangle$ for some  $c_1,\dots,c_n\in A$  such that 
 $C=B^C[C,C],$ then $B=C.$
\end{enumerate}
\end{Proposition}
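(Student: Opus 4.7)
The proof proposal follows the same template as Proposition \ref{prop_local_subgroups_Levine}, with contractible monomials replaced by acyclic monomials and the normal closure $B^C$ replaced by $B^C[C,C]$. The key observation is that the lemma characterizing elements of $H^G[G,G]$ as images of acyclic monomials in $H*F(X)$ (the second half of the monomial lemma) is tailor-made for $\ZZ$-nullhomologous systems, just as the analogous statement about $H^G$ was for Levine's systems.

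For (1)$\Rightarrow$(2), suppose $B$ is Levine-Cha $\ZZ$-local and $C=\langle B\cup\{c_1,\dots,c_n\}\rangle$ with $C=B^C[C,C]$. Set $X=\{x_1,\dots,x_n\}$ and apply the monomial lemma (with $G=C$, $H=B$, $\varphi:x_i\mapsto c_i$): each $c_i$ is the image of some acyclic monomial $w_i\in B*F(X)$, so $c_i=w_i(c_1,\dots,c_n)$. The collection $E=\{x_1w_1^{-1},\dots,x_nw_n^{-1}\}$ is thus a $\ZZ$-nullhomologous system over $B$. Because $B$ is Levine-Cha $\ZZ$-local, $E$ admits a (unique) solution $(b_1,\dots,b_n)$ in $B$. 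Since acyclicity is preserved under the inclusion $B*F(X)\hookrightarrow A*F(X)$, $E$ is also $\ZZ$-nullhomologous over $A$, and $(c_1,\dots,c_n)$ is an $A$-solution. By Proposition \ref{prop_invisible} and the Levine-Cha $\ZZ$-locality of $A$, solutions in $A$ are unique, so $c_i=b_i\in B$ and $C=B$.

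For (2)$\Rightarrow$(1), take a $\ZZ$-nullhomologous system $E=\{x_1w_1^{-1},\dots,x_nw_n^{-1}\}$ over $B$. Since $A$ is Levine-Cha $\ZZ$-local and contains $B$, $E$ has a unique solution $(c_1,\dots,c_n)$ in $A$. Put $C=\langle B\cup\{c_1,\dots,c_n\}\rangle$. Each $c_i$ equals the image of the acyclic monomial $w_i\in B*F(X)$ under the evaluation $x_j\mapsto c_j$; by the monomial lemma this places every $c_i$ in $B^C[C,C]$, whence $C\subseteq B^C[C,C]$, and the reverse inclusion is automatic. Hence $C=B^C[C,C]$, and hypothesis (2) gives $B=C$, so $c_i\in B$. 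Uniqueness of the solution inside $B$ is inherited from uniqueness inside $A$, since two distinct $B$-solutions would yield two distinct $A$-solutions.

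The only slightly delicate point is the uniqueness step in (2)$\Rightarrow$(1): one might be tempted to argue that a subgroup of a group with no invisible subgroups has no invisible subgroups, but that is not obvious since invisibility in $B$ does not transfer to $A$ (the subgroup in question need not be normal in $A$). I will therefore deduce uniqueness in $B$ directly from the uniqueness of the $A$-solution, which is the cleanest route and sidesteps any discussion of invisible subgroups in $B$.
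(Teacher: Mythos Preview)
Your proof is correct and follows exactly the approach the paper intends: the paper gives no explicit proof of this proposition, simply writing ``Similarly one can prove the following results'' after Proposition~\ref{prop_local_subgroups_Levine}, and your argument is precisely the anticipated adaptation---replacing contractible monomials by acyclic ones and $B^C$ by $B^C[C,C]$, using the second half of the monomial lemma. Your explicit treatment of uniqueness in $(2)\Rightarrow(1)$ via uniqueness in $A$ is in fact more careful than the paper's terse ``Then $B$ is local'' in the Levine case.
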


\begin{Proposition} Let $A$ be a Levine-Cha $\ZZ$-local group and $U$ be a normal subgroup. Then the following statements are equivalent.
\begin{enumerate}
\item The quotient $A/U$ is Levine-Cha $\ZZ$-local.

\item The quotient $A/U$ has no nontrivial invisible subgroups. 

\item The subgroup $U$ is equivariant Levine-Cha $\ZZ$-local.

\item If  $V= \langle U \cup \{v_1,\dots , v_n\} \rangle^A$ for some $v_1,\dots,v_n\in A$  such that $V=U[V,A]$, then $V=U.$   
\end{enumerate}
\end{Proposition}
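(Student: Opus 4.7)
The plan is to mirror the proof of Proposition~\ref{prop_local_normal_subgroups_Levine}, substituting the Levine-Cha subgroup criterion (the preceding proposition) for Proposition~\ref{prop_local_subgroups_Levine}, and invoking Proposition~\ref{prop_invisible} to pass between absence of invisible subgroups and unique solvability of $\ZZ$-nullhomologous systems.

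For (3)$\Leftrightarrow$(4), I would observe that any subgroup of $A\ltimes A$ containing $B=A\ltimes U$ has the form $C=A\ltimes V$ for some normal $V\triangleleft A$ with $V\supseteq U$. By the preceding subgroup proposition, equivariant Levine-Cha locality of $U$ is equivalent to the statement that every such $C$, finitely generated over $B$ and satisfying $C=B^C[C,C]$, equals $B$. Equation~\eqref{lem-two-subgroups} gives $B^C=A\ltimes U[V,A]$, and a direct computation shows $[A\ltimes V,A\ltimes V]=[A,A]\ltimes [V,A][V,V]$; since $V\subseteq A$, we have $[V,V]\subseteq[V,A]$, so this commutator subgroup equals $[A,A]\ltimes[V,A]$. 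Consequently the condition $C=B^C[C,C]$ collapses to $V=U[V,A]$, which is exactly the hypothesis in (4), while the finite generation of $C$ over $B$ translates directly into the existence of $v_1,\dots,v_n$ with $V=\langle U\cup\{v_1,\dots,v_n\}\rangle^A$.

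For (2)$\Leftrightarrow$(4), the hypothesis on $V\supseteq U$ in (4) says that $V/U$ is the normal closure in $A/U$ of finitely many elements and satisfies $V/U=[V/U,A/U]$; this is precisely the definition of invisibility of $V/U$ inside $A/U$, so (4) is equivalent to the absence of nontrivial invisible subgroups in $A/U$.

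For (1)$\Leftrightarrow$(2), Proposition~\ref{prop_invisible} identifies (2) with the statement that every $\ZZ$-nullhomologous system over $A/U$ has at most one solution. To upgrade uniqueness to Levine-Cha locality I would establish existence of solutions by lifting: given a system $\{x_iw_i^{-1}\}$ over $A/U$, replace each group coefficient of $w_i$ by a chosen preimage under $A\epi A/U$ to obtain acyclic monomials $\tilde w_i$ over $A$ (acyclicity is preserved, as it depends only on the image in $F(X)_{\mathrm{ab}}$). Since $A$ is Levine-Cha $\ZZ$-local, the lifted system has a unique solution in $A$, whose image in $A/U$ solves the original system. Combining existence with uniqueness yields (1); the reverse implication is immediate. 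The main technical point is the absorption of the extra commutator term $[C,C]$ into $B^C$ in the (3)$\Leftrightarrow$(4) step via $[V,V]\subseteq[V,A]$, after which every equivalence follows by the same manipulations as in the Levine case.
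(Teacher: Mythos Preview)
Your proposal is correct and is precisely the argument the paper has in mind: the paper gives no proof beyond the sentence ``Similarly one can prove the following results,'' and your write-up is the natural unfolding of that ``similarly,'' mirroring Proposition~\ref{prop_local_normal_subgroups_Levine} with the Levine--Cha subgroup criterion in place of Proposition~\ref{prop_local_subgroups_Levine}. The one genuinely new step relative to the Levine case is absorbing the extra $[C,C]$ factor in the condition $C=B^C[C,C]$ for (3)$\Leftrightarrow$(4), and your observation that $[V,V]\subseteq[V,A]$ (because $V\trianglelefteq A$), so that $B^C[C,C]=A\ltimes U[V,A]=B^C$, handles this cleanly; the remaining equivalences then reduce verbatim to the Levine argument.
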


\begin{Theorem}\label{Levine-Cha-right-exact}
 Levine-Cha's $\ZZ$-localization is right exact. 
\end{Theorem}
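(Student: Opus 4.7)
The plan is to invoke Theorem \ref{th_s-d-l} in exactly the same way that Theorem \ref{Levine-right-exact} invoked it for ordinary Levine's localization. By the equivalence (1)$\Leftrightarrow$(2) of Theorem \ref{th_s-d-l}, right exactness of $L_{\sf Lev\text{-}Cha}$ reduces to the assertion that for any Levine-Cha $\ZZ$-local group $A$ and any equivariantly Levine-Cha $\ZZ$-local normal subgroup $U\triangleleft A,$ the quotient $A/U$ is again Levine-Cha $\ZZ$-local. But this is precisely the implication (3)$\Rightarrow$(1) of the Proposition stated immediately before the theorem. So the proof is literally a one-line citation of that Proposition combined with Theorem \ref{th_s-d-l}.

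The substantive content therefore lies in the two preceding Propositions, which I would prove in parallel with Propositions \ref{prop_local_subgroups_Levine} and \ref{prop_local_normal_subgroups_Levine}. The subgroup Proposition substitutes acyclic monomials (elements of $G^{G*F(X)}[G*F(X),G*F(X)]$) for contractible ones (elements of $G^{G*F(X)}$), which is exactly why its condition weakens from $C=B^C$ to $C=B^C[C,C].$ For the normal-subgroup Proposition, one applies the Lemma $(G\ltimes U)^{G\ltimes V}= G\ltimes U[V,G]$ and observes that $[A\ltimes V, A\ltimes V]=[A,A]\ltimes[V,A]$ is already absorbed into $A\ltimes U[V,A]$ whenever $U\subseteq V;$ thus the equivariant locality condition still simplifies to $V=U[V,A],$ exactly as in the Levine case. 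The equivalence of that condition with locality of $A/U$ then goes through Proposition \ref{prop_invisible}, which was stated and proved uniformly for Levine's and $\ZZ$-nullhomologous systems simultaneously.

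The main obstacle, such as it is, amounts to careful bookkeeping: checking that the four-way equivalence in the final Proposition really parallels Proposition \ref{prop_local_normal_subgroups_Levine} step-for-step once the condition $C=B^C[C,C]$ is in place. Since Proposition \ref{prop_invisible} already handles both Levine and $\ZZ$-nullhomologous systems in a single statement, and since the extra commutator factor disappears in the equivariant reduction, no new ideas are required beyond those already developed for Theorem \ref{Levine-right-exact}.
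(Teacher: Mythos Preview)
Your proposal is correct and matches the paper's approach exactly: the paper simply states ``Similarly one can prove the following results'' and lists the two Propositions and the Theorem without further argument, so the intended proof is precisely the parallel to Theorem \ref{Levine-right-exact} via Theorem \ref{th_s-d-l} and the (3)$\Rightarrow$(1) implication of the preceding Proposition. Your additional remarks on how the commutator factor $[C,C]$ is absorbed in the equivariant reduction and how Proposition \ref{prop_invisible} already covers both settings are accurate and in fact supply more detail than the paper itself provides.
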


We believe that similar results can be proved for Levine-Cha's $R$-localization for arbitrary $R$.

\section{\bf Freely defined localizations}
In the following three sections we introduce more classes of localizations (with some examples and non-examples) which properly extend the class of right exact localizations.
\subsection{Freely defined localizations} A localization $L$ is called {\it freely defined} if it is a localization with respect to the class of maps $\{\eta_F: F\to LF\},$ where $F$ runs over all free groups. In other words, a group $A$ is local if and only if ${\sf Hom}(LF,A)\to {\sf Hom}(F,A)$ is a bijection for any free group $F$.   

\begin{Proposition} 
Any right exact localization is freely defined, i.e.
$$\mathfrak{L}_{\sf right exact}\subseteq \mathfrak{L}_{\sf freely\ defined}.$$
\end{Proposition}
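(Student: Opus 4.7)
The plan is to prove the nontrivial inclusion: every $W$-local group is $L$-local, where $W=\{\eta_F:F\to LF\mid F\text{ free}\}$. The reverse inclusion is automatic because each $\eta_F$ is by construction an $L$-equivalence, and $L$-equivalences induce bijections on ${\sf Hom}(-,A)$ for every $L$-local group $A$.

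So fix a $W$-local group $A$ and an arbitrary group $G$, and choose a free simplicial resolution $F_\bullet\twoheadrightarrow G$. Since $\pi_0$ of a simplicial group is the coequalizer of $d_0,d_1$ in ${\sf Gr}$, applying ${\sf Hom}(-,A)$ turns it into an equalizer:
$${\sf Hom}(G,A)\;=\;{\sf Hom}(\pi_0F_\bullet,A)\;\cong\;{\sf eq}\bigl({\sf Hom}(F_0,A)\rightrightarrows {\sf Hom}(F_1,A)\bigr).$$
Because $L$ is right exact, Theorem \ref{theorem_s.d.f}(5) gives a natural isomorphism $LG=L\pi_0(F_\bullet)\cong\pi_0(LF_\bullet)$, so the same argument yields
$${\sf Hom}(LG,A)\;\cong\;{\sf eq}\bigl({\sf Hom}(LF_0,A)\rightrightarrows {\sf Hom}(LF_1,A)\bigr).$$

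All components $F_n$ of a free simplicial group are free groups, so $W$-locality of $A$ implies that $\eta_{F_i}^*:{\sf Hom}(LF_i,A)\to{\sf Hom}(F_i,A)$ is a bijection for $i=0,1$. By naturality of $\eta$ and of the coequalizer presentation of $\pi_0$, these two bijections assemble into the comparison map $\eta_G^*:{\sf Hom}(LG,A)\to {\sf Hom}(G,A)$, which is therefore a bijection for every group $G$. Specializing to $G=A$ and transporting ${\sf id}_A$ through the inverse bijection shows that $\eta_A$ admits a two-sided inverse, hence $A$ is $L$-local. This gives $\mathfrak L_{\sf right\ exact}\subseteq\mathfrak L_{\sf freely\ defined}$.

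The argument is essentially a one-line consequence of Theorem \ref{theorem_s.d.f}(5); the only care needed is in verifying that the induced bijection of equalizers really agrees with $\eta_G^*$, which is a purely formal naturality check. The conceptual obstacle, already resolved in the previous subsection, is the identification $LG\cong\pi_0(LF_\bullet)$, without which one could not reduce locality with respect to $G$ to locality with respect to the free groups $F_0,F_1$.
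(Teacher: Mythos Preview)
Your proof is correct and follows essentially the same route as the paper's: take a free simplicial resolution $F_\bullet\twoheadrightarrow G$, use right exactness (via Theorem~\ref{theorem_s.d.f}) to identify $LG$ with the coequalizer of $LF_1\rightrightarrows LF_0$, and compare the resulting equalizer diagrams in ${\sf Hom}(-,A)$. The only cosmetic difference is that you invoke part~(5) of Theorem~\ref{theorem_s.d.f} and phrase things via $\pi_0$, whereas the paper speaks directly of the coequalizer; these are the same statement.
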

\begin{proof}
Assume that $L$ is right exact and a group $A$ satisfies  the property that ${\sf Hom}(LF,A)\to {\sf Hom}(F,A)$ is a bijection for any free group $F$.  We prove that $A$ is local. It is enough to prove that ${\sf Hom}(LG,A)\to {\sf Hom}(G,A)$ is a bijection for any group $G.$ Consider a free simplicial resolution $F_\bullet\overset{\sim}\epi G.$ Since $L$ is right exact, $LG$ is the coequaliser of $LF_1 \rightrightarrows LF_0.$ Then we have a commutative diagram
$$
\begin{tikzcd}
{\sf Hom}(LG,A)\arrow[r]\arrow[d] & {\sf Hom}(LF_0,A)\arrow[r,shift left=1mm]\arrow[r,shift left=-1mm]\arrow[d,"\cong"] & {\sf Hom}(LF_1,A)\arrow[d,"\cong"]\\ 
 {\sf Hom}(G,A)\arrow[r] & {\sf Hom}(F_0,A)\arrow[r,shift left=1mm]\arrow[r,shift left=-1mm] & {\sf Hom}(F_1,A),
\end{tikzcd}
$$ 
where rows are equalizers. Then the left vertical map is also a bijection.  
\end{proof}

\subsection{The localization of the abelianization with respect to the map $\ZZ\to \ZZ_p$}
This subsection is devoted to an example of a freely defined localization, which is not right exact. We denote by $p$ a prime number and by $\ZZ_p$ the abelian group of $p$-adic integers. Note that $${\sf End}(\ZZ_p)\cong \ZZ_p$$ and any endomomorphism of $\ZZ_p$ is given by multiplication by an element of $\ZZ_p.$ Indeed ${\sf End}(\ZZ_p)={\sf Hom}(\ZZ_p, \varprojlim \ZZ/p^i )= \varprojlim {\sf Hom}(\ZZ_p,  \ZZ/p^i )= \varprojlim \ZZ/p^i =\ZZ_p.$  Hence the map $\ZZ\to \ZZ_p$ is local (i.e. ${\sf Hom}(\ZZ_p,\ZZ_p)\cong {\sf Hom}(\ZZ,\ZZ_p)$).

Consider the localization on the category of abelian groups with respect to the map $\ZZ\to \ZZ_p$
$$\ell_{p} : {\sf Ab}\longrightarrow {\sf Ab},\hspace{1cm} \ell_p:=\ell_{\ZZ\to \ZZ_p}.$$
Any localization on the category of abelian groups is additive, so $\ell_{p}$ is also additive and we can consider its derived functors in the sense of classical homological algebra.   

\begin{Lemma}\label{lemma_abelian_localization}  For any free abelian group $A$ we have $\ell_pA=A\otimes \ZZ_p.$ Moreover, the zero derived functor is given by tensoring by $\ZZ_p$ 
$${\sf L}_0\ell_{p}= - \otimes \ZZ_p,$$
which is not a localization functor. 
In particular, $\ell_p$ is not equal to its zero derived functor. 
\end{Lemma}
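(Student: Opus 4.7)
The plan is to verify the three parts of the lemma in sequence, the only substantive work lying in the last one.

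\smallskip

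\emph{Free abelian case.} Since $\ZZ_p$ is $\ell_p$-local by construction, it is enough to check that for $A=\bigoplus_I \ZZ$ the abelian group $A\otimes \ZZ_p=\bigoplus_I \ZZ_p$ is itself $\ell_p$-local. The universal property of $A\to A\otimes \ZZ_p$ then follows at once from the tensor--hom adjunction
$$
{\sf Hom}(A\otimes \ZZ_p,B)={\sf Hom}(A,{\sf Hom}(\ZZ_p,B))={\sf Hom}(A,B),
$$
valid for any local $B$ (the second equality uses locality of $B$). Locality of $\bigoplus_I \ZZ_p$ reduces to showing ${\sf Hom}(\ZZ_p,\bigoplus_I \ZZ_p)=\bigoplus_I \ZZ_p$: given a homomorphism $f:\ZZ_p\to \bigoplus_I \ZZ_p$, the element $f(1)=(\alpha_i)_{i\in I}$ already lies in the direct sum, so only finitely many $\alpha_i$ are nonzero; since each coordinate projection of $f$ equals multiplication by the corresponding $\alpha_i\in {\sf End}(\ZZ_p)=\ZZ_p$, the map $f$ is completely determined by $f(1)$.

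\smallskip

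\emph{Computing ${\sf L}_0\ell_p$.} For an abelian group $M$, fix a free abelian presentation $F_1\to F_0\to M\to 0$. Since $\ell_p$ is additive, its zero derived functor can be computed as ${\sf Coker}(\ell_p F_1\to \ell_p F_0)$. By the previous step this equals ${\sf Coker}(F_1\otimes \ZZ_p\to F_0\otimes \ZZ_p)$, which is $M\otimes \ZZ_p$ by right exactness of the tensor product.

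\smallskip

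\emph{Non-idempotence of $-\otimes \ZZ_p$.} Since every localization is idempotent, it suffices to exhibit an $M$ for which the natural map $M\otimes \ZZ_p\to (M\otimes \ZZ_p)\otimes \ZZ_p$ fails to be an isomorphism; the cleanest choice is $M=\ZZ$, reducing the task to the claim that $\eta:\ZZ_p\to \ZZ_p\otimes_\ZZ \ZZ_p$, $x\mapsto x\otimes 1$, is not an isomorphism. This is the main obstacle. To rule it out I will tensor with $\mathbb{Q}$ and observe that $\eta\otimes_\ZZ\mathbb{Q}$ is the map $\mathbb{Q}_p\to \mathbb{Q}_p\otimes_\mathbb{Q} \mathbb{Q}_p$, $x\mapsto x\otimes 1$. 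Viewed as $\mathbb{Q}_p$-vector spaces through the left factor, the target is free of rank $\dim_\mathbb{Q}\mathbb{Q}_p=2^{\aleph_0}$, whereas the source has rank one, so $\eta\otimes_\ZZ\mathbb{Q}$ is not surjective. Hence $\eta$ itself is not surjective, and $-\otimes\ZZ_p$ is not a localization. The final assertion then follows: $\ell_p(\ZZ_p)=\ZZ_p$ since $\ZZ_p$ is $\ell_p$-local, while ${\sf L}_0\ell_p(\ZZ_p)=\ZZ_p\otimes_\ZZ\ZZ_p\neq \ZZ_p$ by the argument just given.
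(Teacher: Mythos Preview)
Your proof is correct and follows essentially the same three-step plan as the paper: verify $\ell_p$ on free abelians is $-\otimes\ZZ_p$, deduce ${\sf L}_0\ell_p=-\otimes\ZZ_p$, then note $\ZZ_p\otimes\ZZ_p\not\cong\ZZ_p$. The only notable differences are cosmetic: the paper checks locality of $\bigoplus_I\ZZ_p$ by embedding it in the product $\prod_I\ZZ_p$ rather than via ${\sf End}(\ZZ_p)=\ZZ_p$ directly, and the paper simply asserts $\ZZ_p\otimes\ZZ_p\not\cong\ZZ_p$ whereas you supply the (correct and useful) argument via tensoring with $\mathbb{Q}$ and comparing $\mathbb{Q}_p$-dimensions.
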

\begin{proof}
Let $\alpha$ be any cardinal number. We prove that $\ZZ_p^{\oplus \alpha}$ is local with respect to the homomorphism $\ZZ\to \ZZ_p.$ First we note that the product $\ZZ_p^{\prod \alpha}$ is obviously local 
$${\sf Hom}(\ZZ_p,\ZZ_p^{\prod \alpha})\cong {\sf Hom}(\ZZ_p,\ZZ_p)^{\prod \alpha}\cong {\sf Hom}(\ZZ,\ZZ_p)^{\prod \alpha} \cong {\sf Hom}(\ZZ,\ZZ_p^{\prod \alpha}).$$ 
Then we have a commutative diagram
$$
\begin{tikzcd}
{\sf Hom}(\ZZ_p,\ZZ_p^{\oplus \alpha})\arrow[r]\arrow[d,rightarrowtail] & {\sf Hom}(\ZZ,\ZZ_p^{\oplus \alpha})\arrow[d,rightarrowtail] \\
{\sf Hom}(\ZZ_p,\ZZ_p^{\prod \alpha})\arrow[r,"\cong"] & {\sf Hom}(\ZZ,\ZZ_p^{\prod \alpha}).
\end{tikzcd}
$$
This implies that the upper horizontal map is a monomorphism. On the other hand it is an epimorphism because for any map $f:\ZZ\to \ZZ_p^{\oplus \alpha}$ we can lift it using the $\ZZ_p$-module structure $f':\ZZ_p\to \ZZ_p^{\oplus \alpha}$ $f'(a)=a\cdot f(1).$ Thus $ \ZZ_p^{\oplus \alpha}$ is local.

Assume that $B$ is a local group. Then we have 
$${\sf Hom}(\ZZ^{\oplus \alpha},B)\cong {\sf Hom}(\ZZ,B)^{\prod \alpha}\cong {\sf Hom}(\ZZ_p,B)^{\prod \alpha} \cong {\sf Hom}(\ZZ^{\oplus \alpha}_p,B).$$ It follows that the map $\ZZ^{\oplus \alpha} \to \ZZ^{\oplus \alpha}_p$ is a universal map to local groups and $\ell_p(\ZZ^{\oplus \alpha})\cong \ZZ^{\oplus \alpha}_p.$ In other words, for any free abelian group $A$ we have $\ell_p A\cong A\otimes \ZZ_p.$  By the universal property of the morphism $A\to A\otimes \ZZ_p$ we see that the isomorphism $\ell_p A\cong A\otimes \ZZ_p$ is natural. If two additive functors coincide on free abelian groups, then their derived functors coincide. Hence ${\sf L}_0\ell_p=-\otimes \ZZ_p.$ The functor $-\otimes \ZZ_p$ is not a localization because $\ZZ\otimes \ZZ_p\otimes \ZZ_p=\ZZ_p\otimes \ZZ_p \not\cong \ZZ_p=\ZZ\otimes \ZZ_p.$
\end{proof}

\begin{Proposition} Let $L:{\sf Gr}\to {\sf Gr}$ denote the composition  given by $$LG:=\ell_{p}(G_{ab}).$$ Then the following holds.
\begin{enumerate}
\item The functor $L$ is the localization with respect to the map $f:F_2\to LF_2 = \ZZ_p^2,$ where $F_2$ is the 2-generated free group 
$$L=L_f.$$
\item The zero derived functor of $L$ can be described as follows
${\sf L}_0L(G)=G_{ab}\otimes \ZZ_p,$ which is not a localization.
\item The functor $L$ 
is freely defined but it is not right exact, therefore we have:
$$\mathfrak{L}_{\sf right exact}\ne \mathfrak{L}_{\sf freely\ defined}.$$
\end{enumerate}
\end{Proposition}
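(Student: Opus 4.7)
The plan is to verify the three claims in sequence, using Lemma \ref{lemma_abelian_localization} as the main input and Theorem \ref{theorem_s.d.f} for the translation between right exactness and agreement with the zero derived functor.

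For (1), I would identify the class of $L$-local groups with the class of $f$-local groups, where $f\colon F_2 \to \ZZ_p^2$. An $L$-local group $A$ satisfies $A \cong \ell_p(A_{\sf ab})$; since the right-hand side is abelian and $\ell_p$-local as an abelian group, this forces $A$ itself to be abelian and $\ell_p$-local. Conversely, any abelian $\ell_p$-local $A$ is $L$-local, because $LA = \ell_p(A_{\sf ab}) = \ell_p(A) = A$. For $f$-locality, the key observation is that any homomorphism $\ZZ_p^2 \to A$ has abelian image, so surjectivity of ${\sf Hom}(\ZZ_p^2,A) \to A\times A = {\sf Hom}(F_2,A)$ forces every pair of elements in $A$ to commute, making $A$ abelian. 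Once $A$ is abelian, ${\sf Hom}_{\sf Gr}(\ZZ_p^2,A) = {\sf Hom}_{\sf Ab}(\ZZ_p,A)^2$ and $A \times A = {\sf Hom}_{\sf Ab}(\ZZ,A)^2$, so $f$-locality reduces to $\ell_p$-locality.

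For (2), I would compute ${\sf L}_0 L(G)$ using a free simplicial resolution $F_\bullet \to G$. Since each $(F_n)_{\sf ab}$ is a free abelian group, Lemma \ref{lemma_abelian_localization} gives $LF_n = (F_n)_{\sf ab}\otimes \ZZ_p$, functorially in $n$. Because $\pi_0$ of a simplicial abelian group is the cokernel of $d_0 - d_1$ and $-\otimes \ZZ_p$ is right exact on ${\sf Ab}$, one can pull the tensor product outside the cokernel to get ${\sf L}_0 L(G) = \pi_0(F_{\bullet,{\sf ab}}\otimes \ZZ_p) = \pi_0(F_{\bullet,{\sf ab}})\otimes \ZZ_p = G_{\sf ab}\otimes \ZZ_p$. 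To see this is not a localization, observe that ${\sf L}_0 L$ is not idempotent already on $\ZZ_p$: one has $({\sf L}_0 L)(\ZZ_p) = \ZZ_p\otimes \ZZ_p$, which by the last line of Lemma \ref{lemma_abelian_localization} is strictly larger than $\ZZ_p$.

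For (3), free-definedness follows from (1): any $L$-local group is automatically local with respect to $\{\eta_F\colon F\to LF\}$ by the universal property of the unit, and conversely any group local with respect to just $\eta_{F_2}\colon F_2\to LF_2 = \ZZ_p^2$ is already $L$-local by (1). Non-right-exactness follows from (2) via Theorem \ref{theorem_s.d.f}: since $L$ is a localization (hence idempotent) but ${\sf L}_0 L = (-)_{\sf ab}\otimes\ZZ_p$ is not idempotent, the natural map ${\sf L}_0 L \to L$ cannot be an isomorphism, so $L$ is not right exact. The main obstacle is the computation in (2): one must justify commuting $\pi_0$ past $-\otimes \ZZ_p$ on the simplicial abelian group $F_{\bullet,{\sf ab}}$. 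This is routine once $\pi_0$ is identified with the cokernel of $d_0 - d_1$, but is worth writing out because Lemma \ref{lemma_abelian_localization} only supplies the explicit form of $\ell_p$ on free abelian groups, so the right-exactness of $-\otimes \ZZ_p$ on all of ${\sf Ab}$ must be invoked separately.
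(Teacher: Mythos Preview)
Your proposal is correct and follows essentially the same route as the paper: identify $L$-local groups with abelian $\ell_p$-local groups, use the abelian image of maps out of $\ZZ_p^2$ to force $f$-local groups to be abelian, compute ${\sf L}_0L$ by applying Lemma~\ref{lemma_abelian_localization} levelwise to a free simplicial resolution and using right exactness of $-\otimes\ZZ_p$, and conclude via Theorem~\ref{theorem_s.d.f}. The only cosmetic difference is that the paper observes $\ZZ\to\ZZ_p$ is a retract of $f$ to pass from $f$-locality to $\ell_p$-locality, whereas you compute the Hom-sets directly; both arguments are equivalent.
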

\begin{proof} It is easy to see that $L$ is a localization. 
Note that $L$-local groups are abelian groups which are local with respect to the map $\ZZ\to \ZZ_p.$ We claim that a group is local if and only if it is local with respect to the map $f:F_2\to \ZZ_p^2.$ Indeed, any $f$-local group $A$ is abelian because for any two elements $a,b\in A$ we can consider a map $F_2\to A$ sending generators to $a,b$ and lift it to a map from an abelian group $\ZZ_p^2\to A.$ On the other hand the map $\ZZ\to \ZZ_p$ is a retract of the map $f,$ and hence, all $f$-local groups are $\ZZ\to \ZZ_p$-local. Hence $L=L_f.$

Consider the set $W=\{F\to LF\mid F\text{\ is free}\}.$ Since $f\in W$ and $L=L_f,$ we obtain that $L$ is freely defined.

Now we prove that ${\sf L}_0L(G)=G_{ab}\otimes \ZZ_p.$   Let us consider a free resolution $F_\bullet\overset{\sim }\epi G$ of a group $G.$ The functor of abelianization is right exact $\pi_0((F_\bullet)_{ab})=G_{ab}.$ Hence $d_0-d_1: (F_1)_{ab}\to (F_0)_{ab}$ is the beginning of a free resolution of $G_{ab}$ in the category of free groups. Then Lemma \ref{lemma_abelian_localization} implies that the cokernel of $\ell_p((F_1)_{ab}) \to \ell_p((F_0)_{ab}) $ is $G_{ab}\otimes \ZZ_p.$ It follows that ${\sf L}_0 L(G)=G_{ab}\otimes \ZZ_p.$ The functor $G\mapsto G_{ab}\otimes \ZZ_p$ is not a localization because $\ZZ_p\not\cong \ZZ_p\otimes \ZZ_p.$
\end{proof}

\section{\bf Equational localizations}

A localization $L$ on the category of groups is called {\it equational} if there exists a class of homomorphisms $W=\{F_\alpha\to G_\alpha \mid \alpha\in \mathfrak{A}\}$ such that $F_\alpha$ is a free group for any $\alpha$ and $L$ is the localization with respect to this class.  It follows from the definition that a freely defined localization is equational, namely,
$$\mathfrak{L}_{\sf freely\ defined}\subseteq \mathfrak{L}_{\sf equational}.$$ 

Let $A,X$ be two sets. Denote by $FA$ and $FX$ the free groups generated by them and by $FA*FX$ their free product. We say that $A$ is the set of parameters and $X$ is the set of variables. A  {\it system of equations} is a triple of sets  $(A,X,E),$ where $E\subseteq FA*FX.$  We say that a group $G$ {\it satisfies} the system of equations $(A,X,E)$ if for any function $A\to G$ there exists a unique function $X\to G$ such that $E$  is contained  in the kernel of the induced map $FA*FX\to G.$ 

For example, consider $A=\{a\},$  $X=\{x\}$ and $E=\{x^2a \}.$ Then $G$ is uniquely $2$-divisible if and only if $G$ satisfies the system of equations $(A,X,E).$

Assume  $\mathcal{E} =\{(A_\alpha,X_\alpha,E_\alpha )\mid \alpha\in \mathfrak{A}\}$ is a class of systems of equations. A group $G$ {\it satisfies}  $\mathcal{E}$ if it satisfies each of the systems. We say that a localization $L$ is {\it defined by the class of equations} $\mathcal{E}$ if the class of $L$-local groups is the class of groups satisfying $\mathcal{E}.$

For example, Bousfield's $H \mathbb Z$-localization can be defined by the class of so-called $\Gamma$-systems of equations \cite{FarjounOrrShelah}. Levine's localization is by definition a localization defined by a class of systems of equations \cite{LevineI}, \cite{LevineII}. Baumslag's $P$-localization is also defined by a class of systems of equations $\mathcal E=\{(\{a\},\{x\},\{x^pa\})\mid p\in P\}.$

\begin{Proposition}\label{prop_equational_equivalent}
A localization is equational if and only if it is defined by some class of systems of equations.  
\end{Proposition}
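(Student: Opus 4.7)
The plan is to set up a dictionary between systems of equations $(A,X,E)$ and homomorphisms with free source, under which satisfying the system corresponds to being local with respect to the homomorphism. Both directions of the proposition then follow formally.

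To a system $(A,X,E)$ I would associate the homomorphism $\iota_{(A,X,E)} \colon FA \to (FA*FX)/\langle\langle E\rangle\rangle$ obtained by composing the natural inclusion into the free product with the quotient map; write $Q_{(A,X,E)}$ for this target group. The central observation, which is just a matter of unpacking definitions, is that a group $G$ satisfies $(A,X,E)$ if and only if $G$ is $\iota_{(A,X,E)}$-local. Indeed, a function $A \to G$ is the same thing as a homomorphism $FA \to G$; prescribing a further function $X \to G$ whose induced map $FA*FX \to G$ kills $E$ is the same as extending this to a homomorphism $Q_{(A,X,E)} \to G$ along $\iota_{(A,X,E)}$; and the existence and uniqueness clauses in the definition of satisfying the system translate exactly to surjectivity and injectivity of $\iota_{(A,X,E)}^* \colon {\sf Hom}(Q_{(A,X,E)},G) \to {\sf Hom}(FA,G)$.

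In the reverse direction, given a homomorphism $f \colon F \to H$ with $F$ free, I would pick a basis $A$ of $F$ (identifying $F$ with $FA$), take any set $X$ together with a surjection $\pi \colon FA*FX \epi H$ extending $f$ along the inclusion $FA \hookrightarrow FA*FX$ (for example take $X = H$ as an underlying set, with $x \mapsto x$), and set $E = \ker \pi$. Then $H \cong Q_{(A,X,E)}$, and under this identification $\iota_{(A,X,E)}$ coincides with $f$; consequently $G$ is $f$-local if and only if $G$ satisfies $(A,X,E)$.

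With this dictionary in hand, both implications of the proposition are formal. If $L$ is defined by a class $\mathcal{E} = \{(A_\alpha,X_\alpha,E_\alpha)\}_{\alpha \in \mathfrak{A}}$ of systems, then the $L$-local groups are precisely the groups local with respect to the class $\{\iota_{(A_\alpha,X_\alpha,E_\alpha)}\}_{\alpha \in \mathfrak{A}}$, a class of maps with free source, so $L$ is equational. Conversely, if $L$ is equational with respect to $W = \{f_\alpha \colon F_\alpha \to G_\alpha\}_{\alpha \in \mathfrak{A}}$, then attaching to each $f_\alpha$ a system as above yields a class of systems that defines $L$. I do not anticipate any genuine obstacle; the only substantive step is verifying the equivalence ``$G$ satisfies $(A,X,E)$ $\iff$ $G$ is $\iota_{(A,X,E)}$-local'', which is a direct translation through the universal properties of $FA$, $FA*FX$, and the quotient by $\langle\langle E\rangle\rangle$.
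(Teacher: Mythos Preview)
Your proof is correct and follows essentially the same approach as the paper's: both directions are obtained by associating to a system $(A,X,E)$ the map $FA \to (FA*FX)/\langle\langle E\rangle\rangle$ and observing that satisfying the system is equivalent to being local with respect to this map. Your version is simply more explicit about the reverse construction (choosing $X$ and $E$ from a given $f:F\to H$) than the paper, which just asserts that any map with free source can be put in this form.
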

\begin{proof}
Let $\mathcal{E} =\{(A_\alpha,X_\alpha,E_\alpha )\mid \alpha\in \mathfrak{A}\}$ be a system of equations. Set $$F_\alpha=FA_\alpha, \hspace{1cm} G_\alpha=(FA_\alpha* FX_\alpha) /  \langle \!\langle E_\alpha \rangle \! \rangle,$$
where  $\langle \!\langle E_\alpha \rangle \! \rangle$ denotes the normal subgroup generated by $E_\alpha.$ It is easy to see that a group satisfies $(A_\alpha,X_\alpha,E_\alpha)$ if and only if it is local with respect to the map $F_\alpha\to G_\alpha.$  
On the other hand, since any map $F\to G$ can be presented in form $FA\to (FA* FX)/E$  for some sets $A,\ X$ and $E,$ any equational localization is defined by some class of systems of equations.  
\end{proof}

\begin{Proposition}
The localization with respect to the map $1\to \ZZ/2$ is equational but it is not freely defined, therefore:
$$\mathfrak{L}_{\sf freely\ defined}\ne \mathfrak{L}_{\sf equational}.$$
\end{Proposition}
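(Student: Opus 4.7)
The plan is to explicitly identify the class of $L$-local groups for $L = L_f$ with $f:1\to \ZZ/2$, and then exploit the torsion-freeness of free groups to show that the defining class of maps ``$F\to LF$ for $F$ free'' collapses to isomorphisms, so that a freely defined localization with the same locals would have to be the identity functor.

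First I would check equationality. Since ${\sf Hom}(1,G)$ is a singleton and ${\sf Hom}(\ZZ/2,G)=\{x\in G\mid x^2=1\}$, the locality condition $f^*:{\sf Hom}(\ZZ/2,G)\cong{\sf Hom}(1,G)$ says exactly that $G$ contains a unique element of order dividing $2$, which must be $e$; equivalently, $G$ has no $2$-torsion. Applying Proposition~\ref{prop_equational_equivalent} with the single system of equations $(A,X,E)=(\emptyset,\{x\},\{x^2\})$ then gives equationality: $G$ satisfies this system iff the unique solution of $x^2=1$ in $G$ is $x=e$, which matches the class of $L$-locals. (Alternatively, since $1=F\emptyset$ is itself free, the defining map $f$ is already of the form required by the definition of equational.)

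Next I would observe that every free group $F$ is torsion-free, hence in particular $L$-local, so $\eta_F:F\to LF$ is an isomorphism for every free $F$. Consequently the class $W=\{\eta_F:F\to LF\mid F \text{ free}\}$ consists entirely of isomorphisms, and every group is $W$-local; in other words, the localization with respect to $W$ is the identity functor $\mathsf{Id}:{\sf Gr}\to{\sf Gr}$.

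Finally, I would exhibit a non-$L$-local group to conclude that $L\ne\mathsf{Id}$, hence $L$ cannot be the localization with respect to $W$. The group $\ZZ/2$ itself is the obvious choice: it carries nontrivial $2$-torsion and so fails to be $L$-local (indeed any homomorphism from $\ZZ/2$ into a $2$-torsion-free group is trivial, giving $L(\ZZ/2)=1$). This forces $L\notin\mathfrak{L}_{\sf freely\ defined}$. The whole argument turns on the innocuous point that torsion-freeness of free groups trivializes the defining class for ``freely defined''; there is no real technical obstacle, only the need to ensure the localization actually exists (which is standard, as $L=L_f$ is localization with respect to a single map).
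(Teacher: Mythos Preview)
Your proof is correct and follows essentially the same approach as the paper's: identify the $L$-local groups as the $2$-torsion-free groups, observe that free groups are torsion-free hence already $L$-local so that $\{F\to LF\}$ consists of isomorphisms, and conclude that the $W$-localization is the identity while $L$ is not. The paper is terser (it simply notes that the trivial group is free to get equationality, and that $LF=F$ forces the defining class to consist of identity maps), but the logical structure is the same.
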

\begin{proof}
The localization is equational by definition because the trivial group is free. Moreover, the same localization can be defined as a localization with respect to the homomorphism $F\to F\times \ZZ/2,$ where $F$ is any free group. Indeed, the class of local groups in both cases is the class of $2$-torsion-free groups. Free groups are local with respect to this localization i.e. $LF=F.$ Hence the corresponding class of morphisms $\{F\to LF\}$ consists of identity morphisms. Therefore the localization is not freely defined.  
\end{proof}

\section{\bf Epi-preserving localizations} 

A localization $L$ is called {\it epi-preserving} if for any epimorphism $G\epi H$ the map $LG\to LH$ is also an epimorphism. 

\begin{Proposition}\label{prop_epi_equivalent}
A localization is epi-preserving if and only if the image of a homomorphism between a pair of local groups is local.  
\end{Proposition}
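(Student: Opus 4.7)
The statement is an iff; I would prove the two directions separately, and in each case the argument is a short diagram chase built on naturality of $\eta$ and the universal property of local groups.

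For $(\Rightarrow)$, assume $L$ preserves epimorphisms and let $f: A \to B$ be a homomorphism between local groups. Factor $f = \iota\pi$ through its image $I = \Im(f)$, where $\pi: A \twoheadrightarrow I$ is surjective and $\iota: I \hookrightarrow B$ is injective. The claim is that $\eta_I: I \to LI$ is an isomorphism. Naturality gives $\eta_I \pi = L\pi \circ \eta_A$; since $A$ is local (so $\eta_A$ is iso) and $L$ preserves epis (so $L\pi$ is surjective), the composite $\eta_I \pi$ is surjective, and therefore so is $\eta_I$. Naturality with respect to $\iota$ gives $L\iota \circ \eta_I = \eta_B \circ \iota$; since $\eta_B$ is iso and $\iota$ is injective, the left side is injective, hence so is $\eta_I$.

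For $(\Leftarrow)$, assume images between local groups are local, let $\pi: G \twoheadrightarrow H$ be an epimorphism, and set $I := \Im(L\pi) \subseteq LH$, which is local by hypothesis. It suffices to show $I = LH$. From $L\pi \circ \eta_G = \eta_H \circ \pi$ and surjectivity of $\pi$, the image of $\eta_H$ is contained in $I$, so $\eta_H$ corestricts to a map $\psi: H \to I$ with $\iota\psi = \eta_H$, where $\iota: I \hookrightarrow LH$ is the inclusion. Locality of $I$ then yields a unique $\tilde\psi: LH \to I$ with $\tilde\psi \circ \eta_H = \psi$. Composing with $\iota$ gives $\iota\tilde\psi \circ \eta_H = \iota\psi = \eta_H = \mathrm{id}_{LH}\circ \eta_H$, and uniqueness in the universal property of $\eta_H$ forces $\iota\tilde\psi = \mathrm{id}_{LH}$. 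Thus $\iota$ admits a section, so $\iota$ is surjective and $I = LH$.

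The only nontrivial step is the retraction argument in $(\Leftarrow)$, which hinges on using the universal property of $\eta_H$ twice: once to produce $\tilde\psi$ from locality of $I$, and once to deduce $\iota\tilde\psi = \mathrm{id}_{LH}$ via uniqueness. The $(\Rightarrow)$ direction is a routine unwinding of naturality squares combined with the epi-preservation hypothesis.
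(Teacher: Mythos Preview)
Your proof is correct and follows essentially the same route as the paper's. The only difference is in the $(\Leftarrow)$ direction: where the paper cites \cite[Lemma 1.7]{Libman} to conclude $I = LH$ from $\Im(\eta_H) \subseteq I$ and $I$ local, you prove this step directly via the retraction argument using the universal property of $\eta_H$, which is exactly what that lemma amounts to.
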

\begin{proof}
Assume that $L$ is epi-preserving $A$ and $B$ are local group and $A\to B$ is a homomorphism. Denote by $I$ the image of this homomorphism. Applying the localization to the epimorphism $A\epi I$ we obtain an epimorphism $A\epi LI.$ It follows that $\eta_I:I\epi LI$ is also an epimorphism. On the other hand, the composition $I\to LI \to B$ is the embedding $I\hookrightarrow B$. Thus  $\eta_I $ is an isomorphism and $I$ is local. 

Assume now that $L$ satisfies the property that an image between local groups is local. Take an epimorphism $G\epi H$ and prove that the map $LG\to LH$ is an epimorphism. Denote by $I$ the image of $LG\to LH$. It is local by the assumption.  Note that the image of $\Im(H\to LH)\subseteq I.$ Indeed $\Im(H\to LH)=\Im(G\epi H\to LH)=\Im(G\to LG\to LH)\subseteq \Im(LG\to LH).$ Then \cite[Lemma 1.7]{Libman} implies that $I=LH.$ 
\end{proof}

\begin{Proposition} An equational localization is epi-preserving, hence
$$\mathfrak{L}_{\sf equational } \subseteq \mathfrak{L}_{\sf epi}. $$
\end{Proposition}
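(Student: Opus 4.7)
The plan is to invoke Proposition \ref{prop_epi_equivalent} and thereby reduce the claim to showing that the image of a homomorphism between two $L$-local groups is again $L$-local. Since $L$ is equational, we may fix a defining class $W=\{w_\alpha : F_\alpha \to G_\alpha \mid \alpha\in\mathfrak{A}\}$, with each $F_\alpha$ free, such that the $L$-local groups are exactly the $W$-local groups. Let $A,B$ be local, $f:A\to B$ any homomorphism, and set $I:=\Im(f)$; the task becomes to verify that $w_\alpha^*:{\sf Hom}(G_\alpha,I)\to{\sf Hom}(F_\alpha,I)$ is a bijection for every $\alpha$.

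For surjectivity of $w_\alpha^*$, I would argue as follows. Given $\varphi:F_\alpha\to I$, freeness of $F_\alpha$ together with the surjection $f:A\epi I$ yields a lift $\tilde\varphi:F_\alpha\to A$ with $f\tilde\varphi=\varphi$. Locality of $A$ then produces the unique extension $\bar\varphi:G_\alpha\to A$ along $w_\alpha$. The composite $f\bar\varphi:G_\alpha\to B$ automatically lands in $I$ (it factors through $f$), so it defines a map $G_\alpha\to I$ whose restriction to $F_\alpha$ equals $f\tilde\varphi=\varphi$.

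For injectivity, suppose $\psi_1,\psi_2:G_\alpha\to I$ coincide on $F_\alpha$. Composing with the inclusion $I\hookrightarrow B$ gives two homomorphisms $G_\alpha\to B$ that agree on $F_\alpha$; locality of $B$ (applied to $w_\alpha$) forces them to be equal, and since $I\hookrightarrow B$ is a monomorphism this yields $\psi_1=\psi_2$. Together with the previous paragraph this shows $I$ is $W$-local, hence $L$-local, and Proposition \ref{prop_epi_equivalent} then delivers epi-preservation.

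The only subtlety — and it is a very mild one — is keeping track of codomains: the extension $\bar\varphi$ a priori lives in $A$ and its composite with $f$ a priori lives in $B$, so one has to observe that the latter in fact takes values in $I$ because $\Im(f)=I$. No genuine obstacle arises; the argument is essentially the observation that freeness of $F_\alpha$ lets solutions of the $w_\alpha$-equations in the subgroup $I$ be pulled back to solutions in $A$, and locality of the ambient $A$ and $B$ then handles existence and uniqueness of extensions respectively.
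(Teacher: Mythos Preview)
Your proof is correct and follows essentially the same approach as the paper: reduce via Proposition \ref{prop_epi_equivalent} to showing the image $I$ is local, then use freeness of $F_\alpha$ to lift into $A$ for existence of extensions and locality of $B$ for uniqueness. The paper's argument is slightly terser but the logical structure is identical.
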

\begin{proof}
Assume that $L$ is defined by a class of maps $\{F_\alpha\to G_\alpha\}.$ Let $A$ and $B$ be local groups and let  $A\to B$ be a homomorphism. By Proposition \ref{prop_epi_equivalent} it is enough to prove that its image $I=\Im(A\to B)$ is local. Take a homomorphism $f:F_\alpha \to I.$ Since $F_\alpha$ is free, we can lift it to a homomorphism $\tilde f:F_\alpha\to A.$ Since $A$ is local, there exists a lifting $\tilde f':G_\alpha \to A $ of $\tilde f.$ Its composition with the projection $A\epi I$ is a  lifting $f':G_\alpha \to I$ of $f:F_\alpha\to I.$ It is unique because $B$ is local. 
\end{proof}

\begin{Proposition} The localization $L_{\ZZ/4\epi \ZZ/2}$ is epi-preserving but it is not equational, therefore we have:
$$ \mathfrak{L}_{\sf equational}\ne \mathfrak{L}_{\sf epi}.$$
\end{Proposition}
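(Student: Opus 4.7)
The plan is to handle the two halves of the statement separately. For the epi-preserving half, I would first identify the $L$-local groups: a group $A$ is local iff the map ${\sf Hom}(\ZZ/2,A)\to {\sf Hom}(\ZZ/4,A)$ induced by the surjection $\ZZ/4\epi\ZZ/2$ is a bijection, which translates to saying that every element $a\in A$ with $a^4=1$ already satisfies $a^2=1$, i.e.\ $A$ has no element of order $4$. This property is clearly hereditary, so any subgroup of a local group is local. By Proposition \ref{prop_epi_equivalent}, it suffices to check that images of homomorphisms between local groups are local, which is now immediate: such an image is a subgroup of a local group.

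For the harder half I would argue by contradiction: assume $L$ is equational, so by Proposition \ref{prop_equational_equivalent} there is a class $\mathcal{E}=\{(A_\alpha,X_\alpha,E_\alpha)\}_\alpha$ of systems of equations whose satisfying groups are exactly the $L$-local ones. I will derive that $\ZZ/4$ satisfies every such system, contradicting the fact that $\ZZ/4$ has an element of order $4$ and therefore cannot be local. Fix a system $(A,X,E)\in\mathcal{E}$ and a parameter function $\phi:A\to \ZZ/4$, and set $G=(FA*FX)/\langle \!\langle E \rangle \!\rangle$; the goal is to produce a unique extension of $\phi$ to a function $\psi:X\to\ZZ/4$ for which the induced homomorphism $FA*FX\to\ZZ/4$ kills $E$.

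For existence I would lift $\phi$ along the surjection $\ZZ\epi\ZZ/4$ (using choice on the set $A$) to a function $\tilde\phi:A\to \ZZ$. Since $\ZZ$ is torsion-free it is $L$-local, so $\tilde\phi$ extends uniquely to $\tilde\psi:X\to \ZZ$ satisfying $E$, and reducing $\tilde\psi$ modulo $4$ supplies the required $\psi$. For uniqueness, suppose $\psi_1,\psi_2:X\to\ZZ/4$ are both solutions for the same $\phi$; they induce homomorphisms $\Psi_1,\Psi_2:G\to \ZZ/4$ agreeing on the image of $FA$ in $G$. Composing with the projection $\pi:\ZZ/4\epi\ZZ/2$ and using that $\ZZ/2$ is $L$-local, the restriction map ${\sf Hom}(G,\ZZ/2)\to {\sf Hom}(FA,\ZZ/2)$ is a bijection, so $\pi\Psi_1=\pi\Psi_2$. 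Hence the difference $\Psi_1-\Psi_2$ (which is a homomorphism because $\ZZ/4$ is abelian) has image in $\ker \pi\cong \ZZ/2$ and vanishes on $FA$; applying locality of $\ZZ/2$ once more forces $\Psi_1-\Psi_2=0$, hence $\psi_1=\psi_2$.

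The main obstacle is this uniqueness step: it relies on two features of $\ZZ/4$ that both need to be present simultaneously, namely that $\ZZ/4$ is abelian (so that the pointwise difference of two homomorphisms into it is again a homomorphism) and that the central extension $\ZZ/2\mono\ZZ/4\epi\ZZ/2$ has both sub- and quotient group local, allowing locality of $\ZZ/2$ to be invoked twice. Either feature alone would not suffice; this is what makes $\ZZ/4\epi\ZZ/2$ a natural candidate separating $\mathfrak{L}_{\sf equational}$ from $\mathfrak{L}_{\sf epi}$.
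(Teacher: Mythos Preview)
Your proof is correct. The epi-preserving half is identical to the paper's. For the non-equational half, both you and the paper ultimately exploit the same fact: in the short exact sequence $\ZZ/2\mono\ZZ/4\epi\ZZ/2$ both ends are $L$-local, which forces $\ZZ/4$ to behave as if it were local with respect to any equational description, contradicting the fact that it is not.

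The packaging differs. The paper first abelianises the defining maps $F_\alpha\to G_\alpha$ to obtain $A_\alpha\to B_\alpha$ with $A_\alpha$ free abelian, observes that $A_\alpha$ is local so $A_\alpha\to B_\alpha$ is a split monomorphism with cokernel $C_\alpha$, and reduces locality of an abelian group $D$ to the vanishing of ${\sf Hom}(C_\alpha,D)$; the contradiction then comes from the exact sequence $0\to{\sf Hom}(C_\alpha,\ZZ/2)\to{\sf Hom}(C_\alpha,\ZZ/4)\to{\sf Hom}(C_\alpha,\ZZ/2)$. You stay in the language of systems of equations throughout: existence of a solution in $\ZZ/4$ comes from lifting the parameters to the local group $\ZZ$ and reducing modulo~$4$, and uniqueness comes from subtracting two solutions (using that $\ZZ/4$ is abelian) and applying locality of $\ZZ/2$ twice. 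Your route is slightly more hands-on and avoids the split-monomorphism/cokernel bookkeeping; the paper's route makes the structural reason (locality reduces to a ${\sf Hom}(C_\alpha,-)=0$ condition) more transparent and would generalise more readily to other examples.
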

\begin{proof}
A group is local with respect to $L_{\ZZ/4\epi \ZZ/2}$ if and only if there is no $4$-torsion in the group (which is not $2$-torsion). Thus a subgroup of a local group is local. Hence by Proposition \ref{prop_epi_equivalent} $L_{\ZZ/4\epi \ZZ/2}$ is epi-preserving. 

Prove that the localization is not equational. Assume the contrary, that $L_{\ZZ/4\epi \ZZ/2}$ is the localization with respect to a class of maps  $\{F_\alpha \to G_\alpha\},$ where $F_\alpha$ is free for any $\alpha$. Let us concentrate our attention on abelian groups. An abelian group is local with respect to the class $\{F_\alpha \to G_\alpha\}$ if and only if it is local with respect to the class of their abelianisations $\{A_\alpha\to B_\alpha\},$ where $A_\alpha=(F_\alpha)_{ab}$ and $B_\alpha=(G_\alpha)_{ab}.$ Since $A_\alpha$ is a free abelian group, it has no $4$-torsion, and hence it is local. Hence $A_\alpha$ is local with respect to the map $A_\alpha\to B_\alpha.$ Therefore $A_\alpha\to B_\alpha$ is a split monomorphism and $B_\alpha=A_\alpha\oplus C_\alpha.$ It follows that an abelian group $D$ is local if and only if ${\sf Hom}(C_\alpha,D)=0$ for any $\alpha.$ On the other hand we have an exact sequence  $$0\longrightarrow {\sf Hom}(C_\alpha,\ZZ/2) \longrightarrow {\sf Hom}(C_\alpha,\ZZ/4) \longrightarrow {\sf Hom}(C_\alpha,\ZZ/2),$$ so if $\ZZ/2$ is local, then $\ZZ/4$ is also local. It is a contradiction because $\ZZ/2$ is local but $\ZZ/4$ is not local.  
\end{proof}

Let $A_n$ denote the alternating group. Libman proved that for $n\geq 7$ the embedding $A_n\hookrightarrow A_{n+1}$ is a local map i.e. $L_{A_n\hookrightarrow A_{n+1}}(A_n)\cong  A_{n+1}$ \cite[Example 3.4]{Libman}. Note that if group $G$ does not contain $A_n$ as a subgroup, it is $L_{A_n\hookrightarrow A_{n+1}}$-local.

\begin{Proposition} The localization $L_{A_n\hookrightarrow A_{n+1}}$ for $n\geq 7$ is not epi-preserving, hence:
$$\mathfrak{L}_{\sf epi}\ne \mathfrak{L}_{\sf all}.$$ 
\end{Proposition}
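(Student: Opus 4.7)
The plan is to exhibit a single epimorphism $G\epi H$ for which the induced map $LG\to LH$ fails to be surjective, where $L:=L_{A_n\hookrightarrow A_{n+1}}$. I would take $H=A_n$ (which is not local, since $\eta_{A_n}:A_n\hookrightarrow A_{n+1}$ is a proper inclusion by Libman's result cited just above) and $G=F$ a free group of sufficiently large finite rank to admit an epimorphism $f:F\epi A_n$; for instance, $F$ free on a generating set of $A_n$.

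The first step is to verify that $F$ is $L$-local. This is immediate from the observation recalled just before the proposition: since a free group is torsion-free, it contains no subgroup isomorphic to the finite non-trivial group $A_n$, so $F$ is $L$-local and $\eta_F:F\to LF$ is an isomorphism. The cited theorem of Libman furthermore identifies $LA_n\cong A_{n+1}$ with $\eta_{A_n}$ the canonical inclusion $A_n\hookrightarrow A_{n+1}$.

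Next, I would apply $L$ to $f$ and combine the two facts above with naturality of $\eta$ in the square
$$
\begin{tikzcd}
F \arrow[d,"\eta_F"] \arrow[r,"f",twoheadrightarrow] & A_n \arrow[d,"\eta_{A_n}"] \\
LF \arrow[r,"Lf"] & LA_n=A_{n+1}
\end{tikzcd}
$$
in which the left vertical arrow is an isomorphism and the right vertical arrow is the proper embedding $A_n\hookrightarrow A_{n+1}$. Since $f$ is surjective and $\eta_F$ is an isomorphism, the image of $Lf$ equals the image of $\eta_{A_n}\circ f$, which is precisely $\Im(\eta_{A_n})=A_n\subsetneq A_{n+1}$. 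Hence $Lf$ is not an epimorphism, and $L$ is not epi-preserving.

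I do not anticipate any genuine obstacle here: both ingredients---Libman's identification $LA_n\cong A_{n+1}$ with $\eta_{A_n}$ the canonical inclusion, and the criterion that groups containing no copy of $A_n$ are automatically $L$-local---are supplied verbatim in the paragraph preceding the proposition. The remaining step is the elementary remark that localizing a surjection whose domain is already local cannot enlarge its image beyond $\Im(\eta_H)$, which in this case is a proper subgroup of $LH$.
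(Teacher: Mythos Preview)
Your proposal is correct and follows essentially the same approach as the paper: choose a free group $F$ surjecting onto $A_n$, observe that $F$ is $L$-local (no subgroup isomorphic to $A_n$) while $LA_n\cong A_{n+1}$ by Libman, and use the naturality square for $\eta$ to conclude that the image of $Lf$ is $A_n\subsetneq A_{n+1}$. The paper's proof is terser but otherwise identical.
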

\begin{proof}
Consider an epimorphism $F\epi A_n$ from a free group and set $L=L_{A_n\hookrightarrow A_{n+1}}$. Note that $F$ is local. Then we obtain a commutative diagram
$$
\begin{tikzcd}
F\arrow[r,twoheadrightarrow] \arrow[d,"\cong"] & A_n \arrow[d,hookrightarrow]  \\
LF\arrow[r] & LA_n = A_{n+1}.
\end{tikzcd}
$$  
Hence the image of $LF\to LA_n=A_{n+1}$ is $A_n,$ so it is not an epimorphism. 
\end{proof}

\section{\bf Nikolov--Segal maps and $P$-localization}

Now we discuss one more conjecture of Emmanuel Dror Farjoun. A homomorphism $f:H\to G$ is called {\it NS-map} (Nikolov--Segal map, see \cite{NikolovSegal}) if it satisfies 
$$[G,G]=[G, \Im f].$$ 
Note that the cokernel (quotient by the normal closure of the image) of an NS-map is abelian. 
Nikolov and Segal proved a theorem \cite[Theorem 1.6]{NikolovSegal} that implies that for a finitely generated group $G$ the map to its pro-finite completion $G\to \hat G_{\sf fin}$ (see \cite[Ex. 2.1.6]{ribes-zalesskii}) is an NS-map.   

\ 

\noindent{\bf Conjecture} (Farjoun). {\it For any localization $L$ on the category of groups and any group $G$ the map 
$$\eta:G\to LG$$ 
is an NS-map. In particular, the cokernel of $\eta$ is abelian.}

\ 

In this section we prove that for a nilpotent group $N$ and any set of primes $P$ the map $N\to L_P N$ is an NS-map, where $L_P$ is Baumslag's $P$-localization.

The theorem of Nikolov--Segal has a stronger corollary: it implies that for any finitely generated group $G$ and any closed normal subgroup of its pro-finite completion $U\triangleleft \hat G_{\sf fin}$ the following holds 
$$[U,\hat G_{\sf fin}]=[U, \Im (G\to \hat G_{\sf fin})].$$ 
We prove an analogue of this statement for Baumslag's $P$-localization and nilpotent groups. Namely, we prove (Proposition \ref{prop_NS}) that for a nilpotent group $N$ and any $P$-divisible normal subgroup $U\triangleleft L_PN$ the following holds
$$ [U, L_PN]= [U, \Im ( N \to L_PN)].$$

We denote by $P$ a set of primes and by $L_P$ Baumslag's $P$-localization i.e. the localization with respect to the map $\ZZ\to \ZZ[P^{-1}]$. 
Since $L_P$ is right exact (Theorem \ref{th_Baumslag_l}), for any natural $c$ it takes nilpotent groups of class at most $c$ to nilpotent groups of class at most $c$ (Theorem  \ref{theorem_s.d._of_nilpotent}). Then, if we denote by ${\sf Nil}_c$ the category of nilpotent groups of class at most $c$, the restriction 
$$L_P^{(c)}:{\sf Nil}_c\to {\sf Nil}_c$$ 
is the left adjoint functor to the embedding of $P$-local nilpotent groups of class at most $c$. It follows that $L_PN $ coincides with $P'$-localization $N_{P'}$ considered by Hilton \cite[Def. in \S 4]{Hilton}, where $P'$ is the completion of $P.$ 
If $P$ is the set of all primes, this localization is known as Malcev's completion. 
In this section for the sake of simplicity we assume that $P$ is fixed and set 
$$L:=L_P.$$ 
 
If $H, K$ are (not necessarily normal) subgroups of $G,$ we define $[H,K]$ as a subgroup generated by commutators $[h,k],$ where $h\in H$ and $k\in K.$   
Moreover, we denote by $[H,_nK]$ the $n$-fold commutator subgroup by recursion: $[H,_0K]=H$ and $[H,_{n+1}K]:=[[H,_n K],K]$
$$[H,_n K]=[H,\underbrace{K, \dots,K}_n].$$
Note that the lower central series of a group $G$ can be defined as $\gamma_n G=[G,_{n-1}G].$ Moreover, if $U$ is a normal subgroup of $G$ the following holds
$$ \gamma_n(G\ltimes U)= \gamma_n G \ltimes [U,_{n-1} G].$$


\begin{Proposition}\label{prop_NS} Let $N$ be a nilpotent group. Then $\eta: N\to LN$ is an NS-map. Moreover, if $U$ is a normal $P$-local subgroup of $LN$, then 
$$[U,_n LN]=[U,_n \Im\: \eta]$$ 
for any $n\geq 1.$
\end{Proposition}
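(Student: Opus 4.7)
The plan is to derive the NS-map property as the special case $U = LN$, $n = 1$ of the iterated-commutator identity, and to prove the latter by induction on the nilpotency class $c$ of $N$. For the base case $c \leq 1$, the group $N$ is abelian, hence by Theorem \ref{theorem_s.d._of_nilpotent} so is $LN$, and both sides of $[U,_n LN] = [U,_n \Im \eta]$ vanish for every $n \geq 1$.

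For the inductive step, set $Z = \gamma_c N$ and $M = N/Z$, so $Z$ is abelian and central in $N$ while $M$ is nilpotent of class $c-1$. Lemma \ref{lemma_central_extension} produces a central extension $K \hookrightarrow LN \twoheadrightarrow LM$ with $K = \Im(LZ \to LN)$. Let $\pi : LN \twoheadrightarrow LM$ and set $\bar U := \pi(U)$. Since $U$ and $LM$ are both $L$-local, Corollary \ref{cor_image_of_local} gives that $\bar U$ is $P$-local, and it is clearly normal in $LM$. The inductive hypothesis applied to $M$ and $\bar U$ yields $[\bar U,_n LM] = [\bar U,_n \Im(M \to LM)]$. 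Because $\pi$ is surjective and maps $[U,_n LN]$ onto the left and $[U,_n \Im \eta]$ onto the right, lifting back along $\pi$ gives
$$
[U,_n LN] \ \subseteq\ [U,_n \Im \eta] \cdot K.
$$

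It remains to show $K \cap [U,_n LN] \subseteq [U,_n \Im \eta]$. For this I would exploit that each element of $K$ is a unique $P$-power root in $LN$ of some element of $\eta(\gamma_c N)$, that $\gamma_c N$ is generated by $c$-fold commutators from $N$ so $\eta(\gamma_c N)$ is generated by $c$-fold commutators in $\Im \eta$, and that $U$ is uniquely $P$-divisible. Via commutator identities of Hall--Petresco type in the nilpotent $P$-local group $LN$, one can pull $P$-th root extraction through iterated commutators: higher-weight correction terms are controlled by the nilpotency bound on $LN$ and collapse in view of the centrality of the correction elements in $K$, while the unique $P$-divisibility of $U$ ensures the required roots are realized inside $U$. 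This realizes any $z \in K \cap [U,_n LN]$ as an element of $[U,_n \Im \eta]$.

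The main obstacle is precisely this last step: making the commutator calculus that pulls $P$-th root extraction inside iterated commutators rigorous, and verifying that the resulting witness lies in $[U,_n \Im \eta]$ rather than merely in its $P$-isolator. With this technical piece established, specializing to $U = LN$ and $n = 1$ yields the NS-map property $[LN, LN] = [LN, \Im \eta]$ as claimed.
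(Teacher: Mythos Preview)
Your reduction to the inclusion $K\cap[U,_nLN]\subseteq[U,_n\Im\eta]$ is valid, but you have not proved it; you yourself flag this as ``the main obstacle,'' and the Hall--Petresco sketch is not an argument. The difficulty is real: elements of the central kernel $K$ can land in $[U,_nLN]$ without being visibly built from commutators involving $K$, so there is no obvious mechanism for realizing such an element inside $[U,_n\Im\eta]$ by root-extraction gymnastics alone. As written, the proof is incomplete at precisely the step that carries all the content.

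The paper avoids this impasse by a different decomposition: rather than inducting on the nilpotency class via a central extension, it works with the associated graded Lie ring ${\sf gr}(A\ltimes U)$, where $A=LN$. Multilinearity of the Lie bracket on the graded pieces lets one move the $P$-power scalars freely: if $X_i^{k_i}\in I:=\Im\eta$ and $Z^{k_1\cdots k_n}=Y\in U$ (using that $U$ is $P$-local), then in $U/[U,A]\otimes A_{ab}^{\otimes n}$ one has $y\otimes x_1\otimes\cdots\otimes x_n=z\otimes\bar x_1\otimes\cdots\otimes\bar x_n$, whence the image $[y,x_1,\dots,x_n]_{\sf gr}$ lies in the image of $[U,_nI]$. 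This yields the single clean identity
\[
[U,_nA]=[U,_nI]\,[U,_{n+1}A]\qquad(n\ge1),
\]
and then downward induction on $n$ from $[U,_cA]=1$ (with $c$ the class of $A$) finishes. The linearization via the associated graded is exactly what replaces your hoped-for commutator calculus, and it does so without correction terms because one is working modulo the next filtration step at each stage.
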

\begin{proof}
A classical construction of W. Magnus \cite{magnus1940gruppen}  associates to a group $G$ a graded Lie ring ${\sf gr}(G)$ of the lower central quotients 
$${\sf gr}(G):= \bigoplus_{n=1}^\infty \  \gamma_n G/\gamma_{n+1} G,$$
whose bracket is induced by the commutator in $G.$ 
In order to distinguish the bracket in ${\sf gr}(G)$ from the commutator in $G$ we will denote it by $[x,y]_{\sf gr}.$

Set $A=LN$ and $I=\Im\: \eta.$ Then we need to prove  that  
$[U,_n A] = [U,_n I]$ for $n\geq 1.$ Consider a graded Lie ring of the semidirect product
$${\sf gr}(A\ltimes U)=\bigoplus_{n\geq 1} \ \  \frac{\gamma_n A}{\gamma_{n+1} A}  \oplus \frac{[U,_{n-1}A]}{[U,_{n}A]}.$$
The summands $\frac{[U,_{n-1}A]}{[U,_{n}A]}$ form an ideal in this Lie ring. Hence, there is a well defined map
$$\frac{U}{[U,A]} \otimes A_{ab}^{\otimes n} \longrightarrow  \frac{[U,_{n}A]}{[U,_{n+1}A]}, \hspace{1cm} y\otimes x_1\otimes \dots \otimes x_n \mapsto [y,x_1,\dots,x_n]_{\sf gr}.$$

Consider some elements $X_1,\dots,X_n\in A$ and $Y\in U.$ Denote their images in $A_{ab}$ and in $U/[U,A]$ by $x_1,\dots,x_n\in A_{ab}$ and $y\in U/[U,A]$ respectively.  Since the map $I\to A$ is a $P'$-isomorphism (see \cite[\S 4]{Hilton}), there exists a natural $k_i$ whose prime factors are in $P$ such that $X_i^{k_i}\in I.$ Set $\bar X_i:=X_i^{k_i}\in I$ and denote their images in $A_{ab} $ by $\bar x_1,\dots,\bar x_{n}.$ Since $U$ is $P$-local, there exists $Z\in U$ such that $Z^{k_1\dots k_n}=Y.$ We denote its image in $U/[U,A]$ by $z.$ Then $\bar x_i=k_i x_i$ and $(k_1\dots k_n)z=y.$ Hence
$$y\otimes x_1\otimes \dots \otimes x_n = z \otimes \bar x_1 \otimes \dots \otimes \bar x_n.$$
Therefore, their images in $\frac{[U,_{n}A]}{[U,_{n+1}A]}$ are equal:
$$[y,x_1,\dots,x_n]_{\sf gr}=[z, \bar x_1,\dots, \bar x_n]_{\sf gr}.$$
It follows that any element of $[U,_nA]$ can be presented as an element of $[U,_nI]$ modulo $[U,_{n+1}A].$ Then  
\begin{equation}\label{eq_UnA}
[U,_nA]=[U,_nI][U,_{n+1}A]
\end{equation}
for $n\geq 1.$
Assume that $N$ is nilpotent of class $c.$ Hence $A$ is nilpotent of class at most $c.$ It follows that $[U,_c A]=1$ and 
$$[U,_{c-1} A]=[U,_{c-1} I].$$ 
Then by induction, using \eqref{eq_UnA}, we obtain $[U,_nA]=[U,_nI]$ for $n=c-1,c-2,\dots,1.$
\end{proof}

\end{document}